\newcounter{mylisti} \newcounter{mylistii}
\newcounter{nest}
\newcommand{\defaultlabel}{}
\newcommand{\bb}{\ensuremath{\mathbb{B}}}
\newcommand{\bn}{\ensuremath{\mathbb N}}
\newcommand{\bp}{\ensuremath{\mathbb P}}
\newcommand{\br}{\ensuremath{\mathbb R}}
\newcommand{\cB}{\ensuremath{\mathcal B}}
\newcommand{\cC}{\ensuremath{\mathcal C}}
\newcommand{\cG}{\ensuremath{\mathcal G}}
\newcommand{\cN}{\ensuremath{\mathcal N}}
\newcommand{\cP}{\ensuremath{\mathcal P}}
\newcommand{\cS}{\ensuremath{\mathcal S}}
\newcommand{\cV}{\ensuremath{\mathcal V}}
\newcommand{\xt}{\ensuremath{\tilde{x}}}
\newcommand{\yt}{\ensuremath{\tilde{y}}}
\newcommand{\vpt}{\ensuremath{\tilde{\varepsilon}}}
\newcommand{\At}{\ensuremath{\tilde{A}}}
\newcommand{\Xt}{\ensuremath{\tilde{X}}}
\newcommand{\Yt}{\ensuremath{\tilde{Y}}}
\newcommand{\vph}{\ensuremath{\hat{\varepsilon}}}
\newcommand{\dist}{\ensuremath{\mathrm{dist}}}
\newcommand{\cof}{\ensuremath{\mathrm{cof}}}
\newcommand{\spa}{{\rm span}}
\newcommand{\supp}{\operatorname{supp}}
\newcommand{\Sz}{\ensuremath{\mathrm{Sz}}}
\newcommand{\Dz}{\ensuremath{\mathrm{Dz}}}
\newcommand{\keq}{\!=\!}
\newcommand{\kin}{\!\in\!}
\newcommand{\kle}{\!<\!}
\newcommand{\kleq}{\!\leq\!}
\newcommand{\kminus}{\!-\!}
\newcommand{\kplus}{\!+\!}
\newcommand{\tn}{|\!|\!|}
\newcommand{\btn}{\big|\!\big|\!\big|}
\newcommand{\bbtn}{\Big|\!\Big|\!\Big|}
\newcommand{\ds}{\displaystyle}
\newcommand{\ie}{\textit{i.e.,}\ }
\newcommand{\sh}{{\scriptstyle\frac12}}
\def\vp{\varepsilon}
\newcommand{\co}{\mathrm{c}_0}
\newtheorem{theorem}{Theorem}[section]
\newtheorem{mainthm}{Theorem}
\newtheorem{lemma}{Lemma}[section]
\newtheorem{proposition}{Proposition}[section]
\newtheorem{corollary}{Corollary}[section]
\newtheorem{claim}{Claim}[section]
\newtheorem{problem}{Problem}[section]
\theoremstyle{definition}
\newtheorem{defn}{Definition}[section]
\theoremstyle{remark}
\newtheorem{rem}{Remark}[section]
\newtheorem{ex}{Example}
\begin{document}

\title[On the geometry of the countably branching diamond graphs]{On the geometry of the countably branching diamond graphs}

\author[Baudier et al.]{Florent~Baudier}
\address{Florent Baudier, Department of Mathematics, Texas A\&M University, College Station, TX 77843-3368, USA}
\email{florent@math.tamu.edu}
\author[]{Ryan Causey}
\address{Ryan Causey, Department of Mathematics, Miami University, Oxford, OH 45056, USA
}
\email{causeyrm@miamioh.edu}
\author[]{Stephen Dilworth}
\address{Steve Dilworth, Department of Mathematics, University of South Carolina, Columbia, SC 29208, USA.}
\email{dilworth@math.sc.edu}
\author[]{Denka Kutzarova}
\address{Denka Kutzarova, Institute of Mathematics, Bulgarian Academy of Sciences, Sofia, Bulgaria and Department of Mathematics, University of Illinois at Urbana-Champaign, Urbana, IL 61801, USA.}
\email{denka@math.uiuc.edu}
\author[]{N. L. Randrianarivony}
\address{N. L. Randrianarivony, Department of Mathematics and Computer Science, Saint Louis University, St. Louis, MO 63103, USA.}
\email{nrandria@slu.edu}
\author[]{Thomas~Schlumprecht}
\address{Thomas Schlumprecht, Department of Mathematics, Texas A\&M University, College Station, TX 77843-3368, USA, and Faculty of Electrical Engineering, 
Czech Technical University in Prague, Zikova 4, 16627, Prague, Czech Republic
}
\email{schlump@math.tamu.edu}
\author[]{Sheng~Zhang}
\address{Sheng Zhang, School of Mathematics, Southwest Jiaotong University, Chengdu, Sichuan 611756, China}
\email{sheng@swjtu.edu.cn}
\date{}

\thanks{The third author was supported by the National Science Foundation under Grant Number DMS-1361461 and by the Workshop in Analysis and Probability at Texas A\&M University in 2015 and 2016.
The fourth author was partially supported by the Bulgarian National Scientific Fund under Grant DFNI-I02/10.
The fifth author was supported by the National Science Foundation under Grant Number DMS-1301591.
The sixth author was supported by the National Science Foundation under Grant Number DMS-146473.
The seventh author was partially supported by the National Science Foundation under Grant Number DMS-1301604.}

\keywords{}
\subjclass[2010]{46B85, 46B80, 46B20}

\begin{abstract}
In this article, the bi-Lipschitz embeddability of the sequence of countably branching diamond graphs $(D_k^\omega)_{k\in\bn}$ is investigated. In particular it is shown that for every $\vp>0$ and $k\in\bn$, $D_k^\omega$ embeds bi-Lipschiztly with distortion at most $6(1+\vp)$ into any reflexive Banach space with an unconditional asymptotic structure that does not admit an equivalent asymptotically uniformly convex norm. On the other hand it is shown that the sequence $(D_k^\omega)_{k\in\bn}$ does not admit an equi-bi-Lipschitz embedding into any Banach space that has an equivalent asymptotically midpoint uniformly convex norm. Combining these two results one obtains a metric characterization in terms of graph preclusion of the class of asymptotically uniformly convexifiable spaces, within the class of separable reflexive Banach spaces with an unconditional asymptotic structure. Applications to bi-Lipschitz embeddability into $L_p$-spaces and to some problems in renorming theory are also discussed.  
\end{abstract}

\maketitle

\setcounter{tocdepth}{4}
\setcounter{secnumdepth}{4}
\tableofcontents

\section{Introduction}

\subsection{Motivation}
One of the most natural way to grasp the geometry of a metric space is to understand in which metric spaces, in particular which Banach spaces, it does, or it does not, bi-Lipschitzly embed. In this article the geometry of the countably branching diamond graph of depth $k$, denoted $D_k^\omega$, is studied. In this introduction only a few fundamental notions and concept from metric space and Banach space geometry are recalled and the reader is directed to the core of the article or the references \cite{Handbook}, \cite{Ostrovskiibook} for undefined notions. 

\medskip

Let $(X,d_X)$ and $(Y,d_Y)$ be two metric spaces. A map $f\colon X\to Y$ is called a bi-Lipschitz embedding if there exist $s>0$ and $D\ge 1$ such that for all $x,y\in X$,
\begin{equation}\label{E:1.1_1}
s\cdot d_X(x,y)\le d_Y(f(x),f(y))\le D\cdot s\cdot d_X(x,y).
\end{equation}
As usual $c_{Y}(X):=\inf\{D\ge 1\ |\text{ equation }\eqref{E:1.1_1}\text{ holds for some embedding }f\}$ denotes the $Y$-distortion of $X$. If there is no bi-Lipschitz embedding from $X$ into $Y$ then we set $c_{Y}(X)=\infty$.
A sequence $(X_k)_{\in\bn}$ of metric spaces is said to equi-bi-Lipschitzly embed into a metric space $Y$ if $\sup_{k\in\bn}c_Y(X_k)<\infty$. 

\medskip

The research carried out in this article is motivated by exhibiting analogies or discrepancies between certain metric characterizations of local properties of Banach spaces and their asymptotic counterparts. To make this more precise some useful terminology needs to be introduced. In general, one is seeking metric characterizations in terms of metric space preclusion in a nonlinear category. For concreteness, let us state one of the numerous metaproblems that can be considered. Say, one wants to work in the uniform category where the objects are metric spaces and the morphisms are injective uniformly continous maps. Let $P$ be a property of metric spaces or Banach spaces, and $\cC_P$ the class it canonically defines. An interesting problem, is to find a family of metric spaces $(X_i)_{i\in I}$ such that $Y\in\cC_P$ if and only if the family $(X_i)_{i\in I}$ does not equi-uniformly embed into $Y$. Arguably, the most interesting case belongs to the Lipschitz category and deals with Banach space properties. The following metaproblem is general enough to encompass all the known metric characterizations in terms of metric space preclusion.

\begin{problem}[Metric space preclusion characterizations]
Fix an ambient class of Banach spaces $\cB_{amb}$. Are there Banach space properties $P$ (or equivalently the classes they define $\cB_P$), and families of metric spaces $(X_i)_{i\in I}$ such that if $Y\in \cB_{amb}$, then 
\begin{enumerate}
\item if $Y\in \cB_P$ then $\sup_{i\in I} c_Y(X_i)=\infty$,
\item if $Y\notin \cB_P$ then $\sup_{i\in I} c_Y(X_i)<\infty$.
\end{enumerate}
Following \cite{Ostrovskii2014}, the family $(X_i)_{i\in I}$ is called in that case a family of test-spaces for $P$ (or $\cB_P$) within the class $\cB_{amb}$ in the Lipschitz category.

If moreover, 
\begin{enumerate}
\item[(2')] $\sup_{Y\notin\cB_P}\sup_{i\in I}c_Y(X_i)<\infty$,
\end{enumerate}
the family $(X_i)_{i\in I}$ shall be called a uniformly characterizing family for $P$ (or $\cB_P$) within the class $\cB_{amb}$ in the Lipschitz category.
\end{problem}

Since we will only deal with the Lipschitz category in this article we will from now on drop the mention to the Lipschitz category. We are mainly concerned with sequences of test-spaces in this article and if the ambient class is the class of all Banach spaces one shall simply say ``sequence of test-spaces'' or ``uniformly characterizing sequence''. Note that if $(X_k)_{k\in \bn}$ is a sequence of test-spaces for $\cB_P$ then $Y\in \cB_P \iff \sup_{k\in \bn} c_Y(X_k)=\infty$, or equivalently, $Y\notin \cB_P \iff \sup_{k\in \bn} c_Y(X_k)<\infty$. Remark that whenever $\cB_P$ admits a sequence of test-spaces then $\cB_P$ is stable under bi-Lipschitz embeddability, in particular $P$ must be hereditary (i.e. passes to subspaces). Also, when $\sup_{k\in\bn} c_Y(X_k)=\infty$, estimating the rate of growth of $c_Y(X_k)$ in terms of some numerical parameter that can be associated to $\cB_P$ and the sequence can be fundamental for applications.

\medskip

Ribe's rigidity theorem \cite{Ribe1976} suggests that it is reasonable to believe that local properties of Banach spaces could be characterized in purely metric terms (not necessarily in terms of test-spaces though). The first successful step in the Ribe program was obtained by Bourgain when he showed that the sequence $(B_k)_{k\in\bn}$ of binary trees of height $k$ is a uniformly characterizing sequence for super-reflexivity. We refer to \cite{BallBourbaki} and \cite{NaorRibe} for a thorough description of the Ribe program and its successful achievements. It is worth noticing at this point that an analogue of Ribe's rigidity theorem in the asymptotic setting remains elusive. Nevertheless, a viable analogue of Bourgain's super-reflexivity characterization in the asymptotic setting was obtained in \cite{BKL2010}. Before we state the Baudier-Kalton-Lancien characterization, let us mention that when dealing with asymptotic properties one shall restrict oneself to the class of reflexive Banach spaces. Whether reflexivity is a technical or a conceptual requirement seems to be a challenging and delicate issue. Also, the Baudier-Kalton-Lancien characterization could be considered as the first step in a certain asymptotic declination of the Ribe program which would seek for characterizations of asymptotic properties in purely metric terms. Important techniques, deep contributions, and profound ideas to tackle such a program are ubiquitous in recent work of the late Nigel Kalton (and his coauthors; c.f. \cite{Kalton2007}, \cite{KaltonRandrianarivony2008}, \cite{BKL2010}, \cite{KaltonFM11}, \cite{KaltonMA12}, \cite{KaltonTAMS13}, \cite{KaltonIJM13}). 

\medskip

Recall that a Banach space $Y$ is said to be uniformly convexifiable if $Y$ admits an \textit{equivalent norm} that is uniformly convex, and the convenient notation $Y\in\langle UC\rangle$ shall be used in the sequel. Similar notations and acronyms are used without further explanations for renorming with other properties. The Baudier-Kalton-Lancien characterization states that the sequence $(T_k^\omega)_{k\in\bn}$ of countably branching trees of height $k$ is a uniformly characterizing sequence for the class of Banach spaces that are asymptotically uniformly smoothable and asymptotically uniformly convexifiable, within the class of reflexive Banach spaces. The paramount ingredient to achieve this characterization is the Szlenk index. Indeed in the reflexive setting,  the Banach spaces that are asymptotically uniformly smoothable and asymptotically uniformly convexifiable are exactly those whose Szlenk index and the Szlenk index of their dual is at most $\omega$. Around the same time, Johnson and Schechtman \cite{JohnsonSchechtman2009} found two new uniformly characterizing sequences for super-reflexivity, namely the sequence $(D_k^2)_{k\in\bn}$ of (2-branching) diamond graphs and the sequence $(L_k^2)_{k\in\bn}$ of (2-branching) Laakso graphs. Very recently, Ostrovskii and Randrianantoanina \cite{ORpreprint} showed that for any $r\ge 2$ the sequence $(D_k^r)_{k\in\bn}$ of $r$-branching diamond graphs is also a uniformly characterizing sequence for super-reflexivity. In \cite{BaudierZhang2016}, Baudier and Zhang gave a different proof that $\sup_{k\in\bn} c_Y(T_k^\omega)=\infty$ where $Y$ is any reflexive Banach space that is asymptotically uniformly smoothable and asymptotically uniformly convexifiable. This new proof is based on the fact that such spaces have an equivalent norm that has property $(\beta)$ of Rolewicz (see \cite{DKLR}) and could be easily adjusted, for the same Banach space target, to show that $\sup_{k\in\bn} c_Y(L_k^\omega)=\infty$ and $\sup_{k\in\bn} c_Y(P_k^\omega)=\infty$ where $(L_k^\omega)_{k\in\bn}$ (resp. $(P_k^\omega)_{k\in\bn}$) is the sequence of countably branching Laakso (resp. parasol) graphs. Unfortunately the geometric argument used in this proof could not settle the similar problem for the sequence of countably branching diamond graphs. This issue is resolved in this article. Also, a question that arises naturally is whether one could replace the sequence $(T_k^\omega)_{k\in\bn}$ in the Baudier-Kalton-Lancien characterization by $(D_k^\omega)_{k\in\bn}$, $(L_k^\omega)_{k\in\bn}$ or $(P_k^\omega)_{k\in\bn}$ and thus obtain another analogy between the local and the asymptotic versions of the Ribe program. One of the main application of our work is that one cannot replace the sequence $(T_k^\omega)_{k\in\bn}$ neither with the sequence $(D_k^\omega)_{k\in\bn}$, $(L_k^\omega)_{k\in\bn}$ nor $(P_k^\omega)_{k\in\bn}$ in the Baudier-Kalton-Lancien characterization. This seemingly discrepancy between the local Ribe program and the asymptotic Ribe program is due to the fact that the class of super-reflexive Banach spaces, the class of uniformly convexifiable Banach spaces, and the class of uniformly smoothable Banach spaces form the same class in different disguise! However, the classes of uniform asymptotically convexifiable and uniform asymptotically smoothable spaces differ. Actually our work shows that the sequence $(D_k^\omega)_{k\in\bn}$ is a uniformly characterizing sequence for the class of asymptotically uniformly convexifiable Banach spaces within the class of separable reflexive Banach spaces with an unconditional asymptotic structure.

%It is a difficult task to find a characterizing sequence for a Banach space property. Only a handful of such metric space preclusion characterizations have been uncovered. We provide a table at the end of this introduction that gather the current state of the art.
%\newpage{}

\subsection{Content of the article}

In Section \ref{S:2} two descriptions of the countably branching diamond graph $D_k^\omega$ are given: a recursive graph-theoretical description and a non-recursive set-theoretical description. The two constructions define the same unweighted graph (up to graph isomorphism) and therefore the same metric space (up to isometry). The basic metric properties of the diamond graphs are recorded in Section \ref{S:2.3} for convenience.

\medskip

Section \ref{S:3} is divided into four parts, three of them dealing with the embeddability of the sequence $(D_k^\omega)_{k\in\bn}$ into certain Banach spaces. In Section \ref{S:3.1} we show that for every $k\in\bn$, $c_{Y}(D_k^{\omega})\le 6$ whenever $Y$ contains arbitrarily good $\ell_\infty^{<\omega}$-trees. In Section \ref{S:3.2} a new proof is given showing that for any $k\in\bn$, $D_k^\omega$ admits a bi-Lipschitz embedding into $L_1[0,1]$ that has distortion at most $2$. Note that the embedding possesses some interesting properties, such as being ``vertically isometric''. Both embeddings requires the non-recursive set-theoretical description of the countably branching diamond graphs. In Section \ref{S:3.3} embeddability into Banach spaces of the form $L_p([0,1],Y)$ is discussed. It is shown that for $1 \le p < \infty$, if $Y$ is not super-reflexive, then for every $k\in\bn$, $c_{L_p([0,1],Y)}(D_k^{\omega})\le2^{1+1/p}$. This result has an interesting consequence in the renorming theory that is discussed in Section \ref{S:6}. The remaining part, Section \ref{S:4}, is devoted to providing sufficient conditions for a Banach space to contain arbitrarily good $\ell_\infty$-trees of arbitrary height, and thus is essentially Banach space theoretical. In particular we show that such trees can always be found in any Banach space that contains, for all $n\in\bn$, $\ell_\infty^n$ in its $n$-th asymptotic structure. Combining this result with the embedding result from Section \ref{S:3.1} one obtains our main embedding theorem.

\begin{mainthm}
If for all $n\in\bn$, $\ell_\infty^n$ is in the $n$-th asymptotic structure of $Y$, then $\sup_{k\in\bn} c_Y(D_k^\omega)<\infty$, i.e. the sequence $(D^\omega_k)_{k\in\bn}$ can be equi-bi-Lipschtizly embedded into $Y$.
\end{mainthm}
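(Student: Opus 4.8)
\emph{Proof strategy.} The statement is the composition of two results established in the body of the paper. The first is the metric--geometric fact (Section~\ref{S:3.1}) that if $Y$ contains arbitrarily good $\ell_\infty^{<\omega}$-trees, then $c_Y(D_k^\omega)\le 6$ for every $k\in\bn$. The second is the Banach-space-theoretic fact (Section~\ref{S:4}) that the hypothesis ``$\ell_\infty^n$ lies in the $n$-th asymptotic structure of $Y$ for every $n\in\bn$'' already forces $Y$ to contain arbitrarily good $\ell_\infty^{<\omega}$-trees. Granting the two, the theorem is immediate: the hypothesis produces the trees by Section~\ref{S:4}, and the trees produce the uniform bound $\sup_{k\in\bn}c_Y(D_k^\omega)\le 6<\infty$ by Section~\ref{S:3.1}. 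So the plan is to prove these two ingredients and then concatenate them; the second is where the real work lies.

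For the embedding (Section~\ref{S:3.1}) I would argue directly from the non-recursive, set-theoretical labeling of the vertices of $D_k^\omega$ introduced in Section~\ref{S:2}, in which a vertex carries a finite string recording, level by level, which of the countably many parallel length-two branches of the relevant sub-diamond it lies on and whether it is the upper or the lower edge of that branch. An $\ell_\infty^{<\omega}$-tree of height $k$ in $Y$ is, in essence, a suitably normalized tree-indexed family of vectors that reproduces, up to a factor $1+\vp$, the scaffolding by which the finite diamond $D_k^\omega$ embeds into a finite-dimensional $\ell_\infty$-space: increments attached to incomparable nodes are mutually ``far apart yet bounded'' in the max-norm sense, consistently across levels and scales. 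One then maps a vertex to the partial sum, or signed combination, of tree-vectors prescribed by its string. The telescoping structure of these partial sums yields the cheap Lipschitz upper estimate, while the $\ell_\infty$-separation of siblings guarantees that two vertices which first separate at some level are pushed apart by a definite fraction of the graph distance between them, giving the co-Lipschitz lower estimate; the constant $6$ is what survives when these two estimates are propagated through the diamond recursion.

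For the production of the trees (Section~\ref{S:4}) the plan is a recursion on depth that feeds, at each level, the vector player's near-optimal strategy in the Maurey--Milman--Tomczak-Jaegermann asymptotic game realizing $\ell_\infty^n$: in that game player~I repeatedly passes to a finite-codimensional subspace and player~II then picks a unit vector in it, and $\ell_\infty^n$ being in the $n$-th asymptotic structure means player~II can force the $n$ chosen vectors to be $(1+\vp)$-equivalent to the $\ell_\infty^n$-basis whatever player~I does. A node at level $m<k$ is assigned a finite-codimensional subspace and given countably many children, each labeled by a member of a bounded, well-separated family found inside that subspace, while the subspace attached to a child is shrunk finite-codimensionally relative to its parent's; running player~II's $\ell_\infty^k$-strategy along every branch makes the branch increments $(1+\vp)$-equivalent to the $\ell_\infty^k$-basis, and the shrinking of subspaces makes increments at incomparable nodes interact negligibly, which is exactly the $\ell_\infty$-compatibility of siblings required by Section~\ref{S:3.1}. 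I expect the main obstacle to be precisely this simultaneous bookkeeping: one is in effect running countably many plays of the asymptotic game in parallel---one per node of the tree---and must arrange that all the finite-codimensional demands are honored at once. A diagonalization over a countable dense set of directions, together with a gliding-hump / perturbation argument to absorb the accumulated errors, should settle it; it is worth noting that this implication uses only the asymptotic $\ell_\infty^n$ hypothesis---no reflexivity and no unconditionality of the asymptotic structure.
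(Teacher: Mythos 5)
Your proposal follows essentially the same route as the paper: Theorem~\ref{T:3.3} gives $c_Y(D_k^\omega)\le 6(1+\vp)$ from good $\ell_\infty$-trees via the weighted partial sums $\Psi_k(A,r)=\sum_{D\preceq A}c_k(|D|,r)y_D$ along branches, and Lemma~\ref{L:4.4} produces such trees from the hypothesis by an inductive pruning/perturbation exactly of the kind you describe, so concatenating the two is the paper's proof of Theorem~A. The only (cosmetic) divergence is that the paper implements your ``parallel games'' step by first invoking the weakly-null-tree reformulation of the asymptotic structure (Lemma~\ref{L:4.3}, which is where separability of the dual enters) and then pruning to secure the biorthogonality and basis-constant conditions \eqref{E:3.1.1_2} and \eqref{E:3.1.1_4}, rather than playing the Maurey--Milman--Tomczak-Jaegermann game directly.
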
 

Finally, we show that any separable reflexive Banach space with an unconditional asymptotic structure whose Szlenk index of its dual is larger than $\omega$ contains $\ell_\infty^n$ in its $n$-th asymptotic structure for all $n\in\bn$. To achieve this, some tools and concepts from asymptotic Banach space theory (e.g. trees and branches in Banach spaces, asymptotic structure...) are needed and they are recalled in Section \ref{S:4.1}. 

\medskip

In Section \ref{S:5} we are concerned with finding obstructions to the embeddability of the diamond graphs. The main result of this section states that one cannot embed equi-bi-Lipschitzly the sequence $(D_k^\omega)_{k\in\bn}$ into a Banach space that is midpoint asymptotically uniformly convex. The proof of the main result builds upon two very useful techniques: an approximate midpoint argument (originally from Enflo) and the self-improvement argument \`a la Johnson and Schechtman. The proof can be significantly generalized to handle a quite large collection of sequences of graphs, which in particular contains the countably branching Laakso graphs and the countably branching parasol graphs.

\medskip

The article ends with a section devoted to the applications of our work in metric geometry, and in renorming theory, some of which being already mentioned in the first part of this introduction. The two main applications are a new metric characterization in terms of graph preclusion, and tight estimates on the $L_p$-distortion of the countably branching diamond graphs. Our asymptotic notation conventions are the following. Throughout this article we will use the notation $\lesssim$ or $\gtrsim$, to denote the corresponding inequalities up to universal constant factors. We will also denote equivalence up to universal constant factors by $\approx$, i.e., $A\approx B$ is the same as $(A\lesssim B)\land(A\gtrsim B)$. 

\begin{mainthm} Let $Y$ be a reflexive Banach space with an unconditional asymptotic structure. Then, $Y$ is asymptotically uniformly convexifiable if and only if $\sup_{k\in\bn} c_{Y}(D_k^\omega)=\infty.$
\end{mainthm}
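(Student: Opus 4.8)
The plan is to derive Theorem~B by combining three ingredients already established in the paper: the embedding theorem (Theorem~A), the obstruction theorem of Section~\ref{S:5}, and the structural result of Section~\ref{S:4}, using the Szlenk index as the bridge between the analytic and the metric sides. The one auxiliary fact I would invoke is the renorming description of asymptotic uniform convexifiability in the reflexive category: a reflexive Banach space $Y$ is asymptotically uniformly convexifiable if and only if $\Sz(Y^*)\le\omega$. (This is obtained by applying the Knaust--Odell--Schlumprecht / Godefroy--Kalton--Lancien renorming theorem to $Y^*$, whose asymptotic uniform smoothability is equivalent to $\Sz(Y^*)\le\omega$, together with the fact that, in the reflexive setting, $Y$ admits an equivalent asymptotically uniformly convex norm exactly when $Y^*$ admits an equivalent asymptotically uniformly smooth norm.)

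For the implication "$Y$ asymptotically uniformly convexifiable $\Rightarrow \sup_{k\in\bn}c_Y(D_k^\omega)=\infty$", I would fix an equivalent asymptotically uniformly convex norm $|\cdot|$ on $Y$. Since asymptotic uniform convexity implies asymptotic midpoint uniform convexity, $(Y,|\cdot|)$ is midpoint asymptotically uniformly convex, so the main result of Section~\ref{S:5} gives $\sup_{k\in\bn}c_{(Y,|\cdot|)}(D_k^\omega)=\infty$. Passing to an equivalent norm alters bi-Lipschitz distortions by no more than the multiplicative equivalence constant, hence $\sup_{k\in\bn}c_Y(D_k^\omega)=\infty$ for the original norm as well. (This is precisely why the Section~\ref{S:5} obstruction must be, and as stated is, proved for every midpoint asymptotically uniformly convex renorming rather than for a single fixed norm.)

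For the converse I would argue by contraposition. Suppose $Y$ is not asymptotically uniformly convexifiable; then by the characterization above $\Sz(Y^*)>\omega$. Since $Y$ is reflexive with an unconditional asymptotic structure — and hence separable, as is built into the notion of asymptotic structure used here — the structural result of Section~\ref{S:4} applies and shows that $\ell_\infty^n$ lies in the $n$-th asymptotic structure of $Y$ for every $n\in\bn$. Theorem~A then yields $\sup_{k\in\bn}c_Y(D_k^\omega)<\infty$, which completes the equivalence.

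The substantive mathematics — Theorem~A and the Section~\ref{S:5} obstruction — is by hypothesis already in hand, so in this concluding argument the remaining work is essentially bookkeeping: verifying that the hypotheses transfer correctly across the three black boxes. The point to watch is the converse direction, where one must ensure both that "$Y$ is not asymptotically uniformly convexifiable" can legitimately be upgraded to "$\Sz(Y^*)>\omega$" (this is where reflexivity is used in an essential way) and that the reflexivity, unconditionality, and separability hypotheses demanded by the Section~\ref{S:4} result are genuinely in force; once those are confirmed the conclusion is immediate.
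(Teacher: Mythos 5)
Your argument follows the paper's own route: the forward implication is the Section~\ref{S:5} obstruction (Theorem~\ref{T:5.1}) applied to an asymptotically uniformly convex, hence asymptotically midpoint uniformly convex, renorming; the converse is the chain $\Sz(Y^*)>\omega\Rightarrow$ good $\ell_\infty$-trees (Theorem~\ref{T:4.1}) $\Rightarrow$ equi-bi-Lipschitz embeddability (Theorem~\ref{T:3.3}), with the Knaust--Odell--Schlumprecht theorem supplying the translation between asymptotic uniform convexifiability and $\Sz(Y^*)\le\omega$ in the reflexive setting. All of that matches the paper.

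The one genuine flaw is your parenthetical claim that separability is ``built into the notion of asymptotic structure used here.'' It is not: the definition via cofinite-dimensional subspaces makes sense for arbitrary Banach spaces, and the theorem carries no separability hypothesis, whereas Theorem~\ref{T:4.1} (and the tree reformulations of Section~\ref{S:4.1}, which require $X^*$ separable) genuinely does. As written, your converse direction therefore only covers separable $Y$. The paper closes this gap by invoking \cite{DKLR}: every non-separable reflexive Banach space with an unconditional asymptotic structure and $\Sz(Y^*)>\omega$ contains a separable subspace with the same properties, and an equi-bi-Lipschitz embedding of the $D_k^\omega$ into that subspace is in particular one into $Y$. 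Replacing your false assertion with this reduction makes the argument complete.
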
 

 \begin{mainthm}
 \begin{enumerate}[i)]
 \item For $1 \le p < \infty$, $c_{\ell_p}(D_k^{\omega})\approx k^{1/p}$.
 \item  For $1 < p < \infty$, $c_{L_p}(D_k^{\omega})\approx \min\{k^{1/p}, \sqrt{k})\}$.
 \end{enumerate}
 \end{mainthm}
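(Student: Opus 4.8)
The plan is to derive both statements by combining the positive embedding results already established in the paper with matching lower bounds coming from the non-embeddability machinery of Section \ref{S:5} (the approximate-midpoint/self-improvement argument). For the upper bounds: part (i) for $\ell_p$ follows because $\ell_p$ is not super-reflexive when $p\ne 2$ but, more to the point, one has the explicit bi-Lipschitz embedding of $D_k^\omega$ into $L_1$ of distortion $\le 2$ from Section \ref{S:3.2}; however to get the sharp $k^{1/p}$ upper bound into $\ell_p$ one should instead build a direct embedding by assigning to each vertex of $D_k^\omega$ a finitely supported vector, using the recursive/level structure so that a path from $\bi_0$ to $\bi_\infty$ contributes one fresh block of coordinates per level, and then check that the $\ell_p$-norm of the difference of two vertex-images is squeezed between (a constant times) the graph distance and $k^{1/p}$ times it; the ``branching'' directions can be handled with a single extra coordinate per branch since the diamond has countable branching but any two vertices differ in at most one branch choice at each level. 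For $L_p$ with $1<p<\infty$ in part (ii), the upper bound $\min\{k^{1/p},\sqrt k\}$ is obtained by taking the better of two embeddings: the $\ell_p$-type embedding just described (giving $k^{1/p}$), and an $L_2$-style embedding giving $\sqrt k$, which exists because $L_p$ contains an isometric copy of $\ell_2$ and one can embed $D_k^\omega$ into Hilbert space with distortion $\approx\sqrt k$ (a countably-branching analogue of the classical fact that $c_2(D_k^2)\approx\sqrt k$, proved via a Markov-type inequality or a direct Poincaré-inequality computation on the diamond).

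For the lower bounds I would use a Poincaré-type inequality for $D_k^\omega$ tailored to each target. The key combinatorial input is that $D_k^\omega$ contains, at every level, a countably-branching ``parallelogram'' configuration: four points $t,b$ (top and bottom of a sub-diamond) and two midpoints $m,m'$ from distinct branches, with $d(t,m)=d(m,b)=d(t,m')=d(m',b)$ and $d(m,m')$ as large as the side length. Iterating this across the $k$ levels, one derives for any $f\colon D_k^\omega\to \ell_p$ (resp. $L_p$) a diamond inequality of the form
\[
\sum_{\text{level-}j\text{ midpoint pairs}} \|f(m)-f(m')\|^p \ \lesssim\ \sum_{\text{level-}j\text{ edges}} \|f(x)-f(y)\|^p,
\]
summed appropriately over $j$; comparing the left side (which, for a bi-Lipschitz embedding, is bounded below using the large pairwise distances $d(m,m')\approx$ side length) with the right side (bounded above by the Lipschitz constant times edge lengths) forces the distortion to be $\gtrsim k^{1/p}$ in the $\ell_p$ case. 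For $L_p$ with $1<p\le 2$ the same inequality gives $k^{1/p}$; for $p\ge 2$ one instead invokes the fact that $L_p$ has modulus of uniform convexity of power type $2$, which yields the weaker (but for large $k$ sharper) bound $\gtrsim\sqrt k$ via the metric characterization of $2$-convexity applied to the nested midpoint configurations. Putting $\max$ of these two lower bounds against the $\min$ of the two upper bounds pins down $c_{L_p}(D_k^\omega)\approx\min\{k^{1/p},\sqrt k\}$.

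The main obstacle I anticipate is the bookkeeping for the countably-branching lower bound: unlike the finite-branching diamonds of Johnson–Schechtman, here each sub-diamond has infinitely many branches, so one cannot simply average over all of them with finite weights, and one must instead select, at each level, a pair of branches on which the embedding ``spreads out'' enough — this selection has to be done compatibly across all $k$ levels simultaneously, and making the weights in the Poincaré inequality match the $L_p$-geometry (the power $p$, and the power-$2$ convexity for $p\ge2$) without losing a factor that grows with $k$ is the delicate point. A clean way to organize it is to first prove the inequality for a single level (one countably-branching diamond), optimizing the constant, and then use the self-improvement/tensorization argument from Section \ref{S:5} to propagate it through all $k$ levels with the correct dependence; the remaining upper-bound constructions are essentially explicit and should be routine once the right coordinate assignment is chosen.
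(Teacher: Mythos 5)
Your upper bounds are essentially the paper's: the paper produces (Corollary \ref{C:6.1}) an embedding $f_k$ of $D_k^\omega$ into $\co^+$ with distortion at most $3$ and $|\supp(f_k(x))|\le k+1$ for every vertex $x$, and then H\"older's inequality on the supports of size $\le 2k+2$ gives $c_{\ell_p}(D_k^\omega)\lesssim k^{1/p}$ and, via the isometric copy of $\ell_2$ inside $L_p$, $c_{L_p}(D_k^\omega)\lesssim \sqrt{k}$; your ``one fresh block of coordinates per level'' construction is this same embedding. The problems are in your lower bounds. First, your case split for $L_p$ is backwards and, as written, contradicts the theorem: $L_p$ has modulus of uniform convexity of power type $2$ for $1<p\le 2$ and of power type $p$ for $p\ge 2$ (not the other way around), so the convexity argument yields $\gtrsim\sqrt{k}$ in the range $1<p\le 2$ (where $\sqrt{k}=\min\{k^{1/p},\sqrt{k}\}$) and $\gtrsim k^{1/p}$ for $p\ge 2$. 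The bounds you assert --- $k^{1/p}$ for $1<p\le 2$ and $\sqrt{k}$ for $p\ge 2$ --- each exceed the corresponding upper bound and would disprove the statement. The correct lower bounds for $L_p$ are exactly what the paper uses: Lee--Naor's estimate for the subgraph $D_k^2\subset D_k^\omega$ when $1<p<2$, and the power-type-$p$ modulus when $p\ge 2$.

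Second, and more seriously, your strategy for the $\ell_p$ lower bound --- a Poincar\'e-type inequality in which you ``select, at each level, a pair of branches'' --- cannot work for $1\le p<2$, which is precisely the range where the theorem says something new. Any such selection reduces the argument to a finite sub-configuration, i.e.\ to a finitely branching sub-diamond; but $D_k^2$ is a finite metric space and therefore embeds into $\ell_p$ with distortion $(1+\varepsilon)c_{L_p}(D_k^2)\approx\sqrt{k}$ for $1<p<2$, and with distortion $O(1)$ for $p=1$. No local (four-point, or finitely-many-point) inequality can therefore produce the claimed bound $k^{1/p}\gg\sqrt{k}$. The phenomenon is genuinely asymptotic, and the paper's argument (Theorem \ref{T:5.1}) uses it as such: in an asymptotically midpoint uniformly convex space the $\delta$-approximate midpoint set of two points is contained in a finite set plus a ball of small radius (Lemma \ref{L:5.3}), so the \emph{infinitely many} pairwise $1$-separated midpoints of a countably branching bundle cannot all map into it; this forces the image of the top--bottom pair to contract by a definite factor $1-\frac15\tilde{\delta}_Y(\cdot)$ (Lemma \ref{L:5.4}), and the self-improvement step (Proposition \ref{P:5.1}) iterates this over the $k$ levels. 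The quantitative input is the \emph{asymptotic} modulus $\overline{\delta}_{\ell_p}(t)\gtrsim t^p$, valid for all $1\le p<\infty$ --- in particular for $p<2$, where the local modulus of convexity is only of power type $2$ (and is degenerate for $p=1$). That covering-by-finitely-many-balls step is the idea missing from your proposal; without it the selection of a single pair of branches per level gives at best the $D_k^2$ bound.
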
 

We conclude this section by gathering all the known (as far as we know) metric space preclusion characterizations in the following three tables. Note that all the known sequence of test-spaces are actually uniformly characterizing sequences. We also included some interesting open problems. Needless to say that they are many more open problems.
\vskip 1cm 
\fontsize{7}{8}\selectfont
\begin{table}[h]
\begin{tabular}{|c|c|c|c|}
\hline
 & & & \\
$\cB_{amb}$&$\cB_P$ & test-spaces&  \\
 & & & \\
\hline
 & & & \\
 & super-reflexivity& $(B_k)_{k\in\bn}$, $(T_k^r)_{k\in\bn}$ $r\ge 3$ & Bourgain (\cite{Bourgain1986a},1986) \\
 
 & $\langle UC\rangle$ & $B_\infty$ & Baudier (\cite{Baudier2007},2007)\\

  & $\Dz(Y)=\omega$ & $(D_k^2)_{k\in\bn}$, $(L_k^2)_{k\in\bn}$ & Johnson-Schechtman (\cite{JohnsonSchechtman2009},2009)\\

all Banach spaces & $\langle US\rangle$ & certain hyperbolic groups & Ostrovskii (\cite{Ostrovskii2014AGMS},2014)\\

 & $\Dz(Y^*)=\omega$ & $(D_k^r)_{k\in\bn}$, $r\ge 3$ & Ostrovskii-Randrianantoanina \\
  & & & (\cite{ORpreprint}, 2016) \\
\cline{2-4}
 & & & \\
 & K-convexity & Hamming cubes $(H_k)_{k\in\bn}$ & Bourgain-Milman-Wolfson \\
  & non-trivial type & & (\cite{BourgainMilmanWolfson1986},1986) \\
   & & & \\
\cline{2-4}
 & & & \\
 & non-trivial cotype & $\ell_\infty$-grids $([m]^n_\infty)_{m,n\in\bn}$ & Mendel-Naor (\cite{MendelNaor2008}, 2008)\\
  & & & \\
\hline 
& & & \\ 
& UMD  & open & \\ 
& & & \\ 
%\hline & & & \\ & open & Heisenberg group & \\& & & \\ 
\hline 
& & & \\ 
& Pisier's property $(\alpha)$ & open & \\
& & & \\
\hline
\end{tabular}
\vskip .2cm
\caption{Local Ribe program}
\end{table}

\newpage{}

\fontsize{6}{8}\selectfont
\begin{table}[h]
\begin{tabular}{|c|c|c|c|}
\hline
 & & & \\
$\cB_{amb}$&$\cB_P$ & test-spaces &  \\
 & & & \\
\hline
 & & & \\
 & $\langle AUC\rangle\cap\langle AUS\rangle$   & $(T_k^\omega)_{k\in\bn}$, $T_\infty^\omega$ & Baudier-Kalton-Lancien \\
  & & & (\cite{BKL2010},2010)\\
 & $\langle (\beta)\rangle$   &  & \\
  & & & \\
reflexive Banach spaces & $\max\{\Sz(Y),\Sz(Y^*)\}=\omega$  & $(\cS_1, d_{1,1})$ & Motakis-Schlumprecht \\
 & & & (\cite{MotakisSchlumprecht},2016)\\
  & & & \\
\cline{2-4}
 & & & \\
& $\max\{\Sz(Y),\Sz(Y^*)\}\le \omega^\alpha$ & $(\cS_\alpha, d_{1,\alpha})$ & Motakis-Schlumprecht \\
 & when $\omega^\alpha=\alpha$& & (\cite{MotakisSchlumprecht},2016)\\
  & & & \\
\hline
 & & & \\ 
reflexive Banach spaces & $\langle AUC\rangle$ &  & \\%Baudier-Causey-\\
with an unconditional    & & $(D_k^\omega)_{k\in\bn}$ & this article\\%Dilworth-Kutzarova-\\
  asymptotic structure   & $\Sz(Y^*)=\omega$ & & \\%Randrianarivony-\\
 & & & \\%Schlumprecht-Zhang \\
 & & & \\%(2016)\\
\hline
 & & & \\
  & $\langle AUS\rangle$ &  & \\
  & & open  & \\
   & $\Sz(Y)=\omega$ & & \\
   & & & \\
\hline
\end{tabular}
\vskip .2cm
\caption{Asymptotic Ribe program}
\end{table}

\vskip -.5cm
\fontsize{6}{8}\selectfont
\begin{table}[h]
\begin{tabular}{|c|c|c|c|}
\hline
 & & & \\
$\cB_{amb}$&$\cB_P$ & test-spaces&  \\
 & & & \\
\hline
 & & & \\
dual Banach spaces & Radon-Nikod\'ym & $D_\infty$ & Ostrovskii (\cite{Ostrovskii2014}, 2014)\\
 & & & \\
\hline
 & & & \\
 & reflexivity & open  & \\
 & & & \\
\hline
\end{tabular}
\vskip .2cm
\caption{Extended Ribe program}
\end{table}

\fontsize{11}{13}\selectfont

\section{The countably branching diamond graphs}\label{S:2}
By a graph $G=(V,E)$ we mean a graph with vertex-set $V$ and edge-set $E$. The set of vertices or edges is allowed to be infinite however we restrict our attention to simple graphs. i.e. without multiple edges. Our graphs will always be unweighted and equipped with the shortest path distance. The geometry of the (binary or $2$-branching) diamond graphs has proved to be fundamental in connection with applications in theoretical computer science (for the dimension reduction problem in $\ell_1$ see for instance \cite{LeeNaor2004}). The diamond graph of depth $k$ is build in a recursive fashion starting with the $4$-cycle (the diamond graph of depth $1$). The diamond graph of depth $2$ is obtained by replacing each edge of the diamond graph of depth $1$ with a copy of the $4$-cycle. The diamond graphs of higher depth are build in similar fashion. The sequence of graphs obtained, denoted $(D_k^2)_{k\in\bn}$, is usually simply refered to as the sequence of diamond graphs\footnote{Note that in some articles the recursive construction is slightly different since the diamond graph of depth $k+1$ is obtained by replacing each edge of the diamond graph of depth $k$ by a copy of itself; this actually gives a subsequence of the sequence of the diamond graphs as defined here.}. In this article we are concerned with the sequence of \textit{countably branching} diamond graphs, denoted $(D_k^\omega)_{k\in\bn}$, whose formal definition is presented in the next section.
 
\subsection{Graph theoretical recursive definition}

A directed $s$-$t$ graph $G=(V,E)$ is a directed graph which has two distinguished vertices $s,t\in V$. To avoid confusion, we will also write sometimes $s(G)$ and $t(G)$. There is a natural way to ``compose'' directed $s$-$t$ graphs using the $\oslash$-product defined in \cite{LeeRaghavendra2010}. Given two directed $s$-$t$ graphs $H$ and $G$, define a new graph $H\oslash G$ as follows:
\begin{enumerate}[i)]
\item $V(H\oslash G):=V(H)\cup(E(H)\times(V(G)\backslash\{s(G),t(G)\}))$
\item For every oriented edge $e=(u,v)\in E(H)$, there are $|E(G)|$ oriented edges,
\begin{align*}
&\big\{(\{e,v_1\},\{e,v_2\})| (v_1,v_2)\in E(G) \text{ and }v_1,v_2\notin \{s(G),t(G)\}\big\}\\
\cup &\big\{(u,\{e,w\}) | (s(G),w)\in E(G)\big\}\cup \big\{(\{e,w\},u) | (w, s(G))\in E(G)\big\}\\
\cup& \big\{(\{e,w\},v) | (w,t(G))\in E(G)\big\}\cup \big\{(v,\{e,w\}) | (t(G),w)\in E(G)\big\}
\end{align*}
\item $s(H\oslash G)=s(H)$ and $t(H\oslash G)=t(H)$.
\end{enumerate}

\begin{comment} Observe that there is some ambiguity in the definition above, as there are two ways to substitute an edge of $H$ with a copy of $G$. One could remedy to this issue by fixing some arbitrary orientation of the edges of $H$. However, if the graph $G$ is symmetric the orientations are irrelevant. 
\end{comment}
It is also clear that the $\oslash$-product is associative (in the sense of graph-isomorphism or metric space isometry), and for a directed graph $G$ one can recursively define $G^{\oslash^{k}}$ for all $k\in\bn$ as follows:
 \begin{itemize}
 \item $G^{\oslash^{1}}:= G$.
 \item $G^{\oslash^{k+1}}:=G^{\oslash^{k}}\oslash G$, for $k\ge 1$.  
\end{itemize}

Note that it is sometimes convenient, for some induction purposes, to define $G^{\oslash^{0}}$ to be the two-vertex graph with an edge connecting them. Note also that if the base graph $G$ is symmetric the graph $G^{\oslash^{k}}$ does not depend on the orientation of the edges.

\medskip

Consider the complete bipartite infinite graph $K_{2,\omega}$ with two vertices on one side, (such that one is $s(K_{2,\omega})$ and the other $t(K_{2,\omega})$), and countably many vertices on the other side. The countably branching diamond graph of depth $k$ is defined as $D_k^\omega:=K_{2,\omega}^{\oslash^{k}}$. If one starts with the complete bipartite graph $K_{2,r}$ for some $r\ge 2$ instead, the graph obtained is the $r$-branching diamond graph of depth $k$. In particular $D_k^2:=K_{2,2}^{\oslash^{k}}$.

\medskip

The recursive definition of the various types of diamond graphs (and the basic metric properties that can be derived from it) is usually sufficient to prove metric statement about them. However in this article we will need a more ``concrete'' representation of the countably branching diamond graphs in order to prove our main embedding result. This representation, or coding, is the purpose of the next section.
\subsection{Non recursive definition}
We denote by $[\bn]$ all subsets of $\bn$, by $[\bn]^{<\omega}$ all finite subsets,  and by $[\bn]^{\omega} $ all infinite subsets. For $k\in \bn_0:=\bn\cup\{0\}$
we denote by $[\bn]^{\le k}$ the subsets of $\bn$ which have at most $k$ elements, by  
$[\bn]^{<k}$ the subsets of $\bn$ which have less than $k$ elements, and by  $[\bn]^k$, the subsets of $\bn$,  which have exactly $k$ elements.
The elements of subsets of $\bn$ are always written in increasing order, so writing  $A=\{a_1,a_2,\ldots, a_n\}\in [\bn]^{<\omega}$, or $B=\{b_1,b_2,\ldots \}\in [\bn]^{\omega}$, means that $a_1<a_2<\ldots< a_n$, and $b_1<b_2<b_3<\dots$. For $A=\{a_1,\ldots,a_m\}\kin [\bn]^{<\omega}$,  and  $B=\{b_1,b_2,\ldots , b_n\}\kin[\bn]^{<\omega}$ or $B=\{b_1,b_2,\ldots \}\in [\bn]^{\omega}$, $A$ is called an {\em initial segment} of $B$, or $B$ is called an {\em extension of}  $A$, if 
$n>m$ (or $B$ is infinite), and  $a_1=b_1$, $a_2=b_2$,$\ldots$, $a_m=b_m$, and we write in that case $A\prec B$.
For $A=\{a_1,a_2,\ldots ,a_n\}\in [\bn]^{<\omega}$ and $m\le n$ we put $A|_m=\{a_1,a_2,\ldots,a_m\}$. By convention, $A|_0=\emptyset$. Similarly we define $A|_m$ if $A\in[\bn]^{\omega}$.
For $k\in\bn_0$ we define $\bb_0=\{0,1\}$ and  $\bb_k:=\Big\{ \sum_{i=1}^{k-1} \sigma_i 2^{-i}+2^{-k}: (\sigma_i)_{i=1}^{k-1}\subset\{0,1\}\Big\}.$
It is easy to see that $\bb_k=\{\frac{1}{2^k}, \frac{3}{2^k}, \dots, \frac{2i-1}{2^k},\dots, \frac{2^k-1}{2^k}\}$ and $|\bb_k|=2^{k-1}$ for $k\ge 1$.
%$\bb_1=\{\frac{1}{2}\}$, $\bb_2=\{\frac{1}{4}, \frac{3}{4}\}$, $\bb_3=\{\frac{1}{8}, \frac{3}{8}, \frac{5}{8}, \frac{7}{8}\}$, $\bb_k=\{\frac{2i-1}{2^k}: i\in\{1, \dots, 2^{k-1}\}\}$

\begin{defn}[Non recursive definition of the diamond graphs]
For $k\in\bn_0$ we define the graph $G_k=(V_k, E_k)$ where the set of vertices is 
\begin{align*} 
 V_k&=\big\{ (A,r): A\in[\bn]^{\le k}, \, r\in\bb_{|A|}\big\}
 \intertext{and the set of edges is}
 E_k&=\big\{ \{(A,r),(B,s)\}\colon A\prec B \text{ and } |r-s|=2^{-k}\big\}. 
 \end{align*}
\end{defn}

Let us explicitly write down the first two graphs. 
\begin{align*}
&V_0=\big\{ (\emptyset, 0), (\emptyset, 1)\big\},\, E_0=\big\{ \big\{ (\emptyset, 0), (\emptyset, 1)\big\}\big\} 
\intertext{ and putting  $b^\omega\keq(\emptyset, 0)$ and $t^\omega\keq (\emptyset, 1)$}
&V_1=\{b^\omega,t^\omega\}\cup\big\{ \big(\{j\},\sh\big) :j\in \bn\big\} \text{ and }E_1=\big\{ \{b^\omega,(\{j\},\sh)\},  \{t^\omega,(\{j\}, \sh)\}   :j\in\bn\big\}.  
\end{align*}

\psset{xunit=1cm, yunit=.85cm} 
\begin{pspicture}
\rput(0,-4){%\psgrid(12,13)

\uput[0](0,12){$D_0^\omega$}
\psdots(1,13)   \uput[0](1,13){$(\emptyset,1)$}
\psdots(1,11)  \uput[0](1,11){$(\emptyset,0)$}
  															
\psline(1,13)(1,11)

\uput[0](4,12){$D_1^\omega$}
										                        \psdots(8,13)
										                       \uput[0](8,13.1){\tiny{$(\emptyset,1)$}}
\psdots(6,12)   \psdots(6.5,12)\psdots(7,12)\psdots(7.5,12)                \psdots(8,12)                     \psdots(8.5,12)     \psdots(9,12)    \psdots(9.5,12) \psdots(10,12) 
\uput[0](5.2,12.3){\tiny{$(\{1\},\sh)$}}					              \uput[0](7.85,12.2){\tiny{$(\{j\},\sh)$}}
 												\psdots(8,11)
										                  \uput[0](8,10.8){\tiny{$(\emptyset,0)$}}

\psline(6,12)(8,13)      \psline(6.5,12)(8,13)     \psline(7,12)(8,13)    \psline(7.5,12)(8,13) \psline(8,12)(8,13)    \psline(9.5,12)(8,13)   \psline(10,12)(8,13)    
\psline(6,12)(8,11)    \psline(6.5,12)(8,11)     \psline(7,12)(8,11)  \psline(7.5,12)(8,11) \psline(8,12)(8,11)   \psline(9.5,12)(8,11)   \psline(10,12)(8,11)

\uput[0](0,6){$D_2^\omega$}
\pspolygon(6,9)(8.5,7.5)(10,5)(8.5,2.5)(6,1)(3.5,2.5)(2,5)(3.5,7.5)(6,9)
\pspolygon(6,9)(7.5,6.5)(10,5)(7.5,3.5)(6,1)(4.5,3.5)(2,5)(4.5,6.5)(6,9)
\pspolygon(6,9)(6.9,6.5)(6,5)(6.9,3.5)(6,1)(5.1,3.5)(6,5)(5.1,6.5)(6,9)

\psdots(6,9)(8.5,7.5)(10,5)(8.5,2.5)(6,1)(3.5,2.5)(2,5)(3.5,7.5)(6,9)
\psdots(6,9)(7.5,6.5)(10,5)(7.5,3.5)(6,1)(4.5,3.5)(2,5)(4.5,6.5)(6,9)
\psdots(6,9)(6.9,6.5)(6,5)(6.9,3.5)(6,1)(5.1,3.5)(6,5)(5.1,6.5)(6,9)

\psdots(3,5)(4,5)(5,5)(7,5)(8,5)(9,5)
\psdots(4,7)(8,7)(8,3)(4,3)
\psdots(3.75,7.25)(4.25,6.75)(7.75,6.75)(8.25,7.25)
\psdots(3.75,2.75)(4.25,3.25)(7.75,3.25)(8.25,2.75)

\psdots(5.5,3.5)(6,3.5)(6.5,3.5)(5.5,6.5)(6,6.5)(6.5,6.5)

\uput[0](6,9){\tiny{$(\emptyset,1)$}}
\uput[0](2,7.5){\tiny{$(\{1,2\},\frac34)$}}
\uput[0](1,5.5){\tiny{$(\{1\},\sh)$}}\uput[0](6.3,5.3){\tiny{$(\{j\},\sh)$}}
\uput[0](2,2.5){\tiny{$(\{1,2\},\frac14)$}}
\uput[0](6,1){\tiny{$(\emptyset,0)$}}

}
\end{pspicture}
\vskip 4cm
In the following lemma we gather the basic combinatorial properties of the sequence $(G_k)_{k\in\bn}$ that will be needed in the sequel. Elements of proofs are only given for the facts that are not completely obvious.

\begin{lemma}\label{L:2.1}
\begin{enumerate}[a)]
\item For $j,k\in\bn$, with $j\le k$ it follows that $V_j\subset V_k$.
\item Assume that $k\in\bn$ and $(A,r)$, $(B,s)$ are in $V_k$ with $\big\{(A,r),(B,s)\big\}\in E_k$.
 Then it  follows that $|r-s|=2^{-k}$  and 
 either $|A|=k$ and $|B|<k$ or $|B|=k$ and $|A|<k$.

\item Let $k\in\bn$. If $(B,s)\in V_k$, with $B\in[\bn]^k$ (and thus $s\in \bb_k$), and if $(A,r)\in V_k$, then it follows that $\{ (A,r), (B,s)\}\in E_k$ if and only if 

\begin{equation}\label{E:2.2_1}
\hskip -9cm A\prec B
\end{equation}
and
\begin{equation}\label{E:2.2_2} 
\hskip -3cm \text{either $r=s-2^{-k}=\sum_{i=1}^{i^-} \sigma_i 2^{-i}$, and $A=B|_{i^-}$,}
\end{equation}

\begin{align*}
&\text{if $i^-=\max\{ 1\le i\le k-1\colon \sigma_i=1\}$ exists, and $A=\emptyset$ and $r=0$ otherwise},\\
&\text{ or $r=s+2^{-k}=\sum_{i=1}^{i^+-1} \sigma_i 2^{-i} + 2^{-i^+}$, and $A=B|_{i^+}$}\\
&\text{if $i^+=\max\{ 1\le i\le k-1 : \sigma_{i}=0\}$ exists, and $A=\emptyset$ and $r=1$ otherwise}.
\end{align*}
\item For a given $(B,r)\in [\bn]^k\!\times \!\bb_k$  there is a unique edge $\big \{(B_+,r+2^{-k}),(B_-,r-2^{-k})\}\in E_{k-1}$,
for which we have  $$\{(B,r),(B_+,r+2^{-k})\}\in E_k \text{ and } \{(B,r),(B_-,r-2^{-k})\}\in E_k.$$
\end{enumerate}
\end{lemma}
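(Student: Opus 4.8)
The plan is to prove Lemma \ref{L:2.1} one part at a time, since each part is essentially a bookkeeping consequence of the definitions of $V_k$, $E_k$, and the sets $\bb_k$, with the orientation/coding structure of part (c) being the only mildly delicate point.

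For part (a): given $j\le k$ and $(A,r)\in V_j$, we have $A\in[\bn]^{\le j}\subset[\bn]^{\le k}$ and $r\in\bb_{|A|}$, where $|A|\le j\le k$, so the defining condition for membership in $V_k$ is verified verbatim; hence $V_j\subset V_k$. For part (b): by definition of $E_k$, an edge $\{(A,r),(B,s)\}$ requires $|r-s|=2^{-k}$ and (after possibly swapping) $A\prec B$. Since $A\prec B$ forces $|A|<|B|$, and $r\in\bb_{|A|}$, $s\in\bb_{|B|}$, I would argue that elements of $\bb_m$ are dyadic rationals whose ``depth'' (the power of $2$ in lowest terms) is exactly $m$: indeed $\bb_m=\{(2i-1)/2^m:1\le i\le 2^{m-1}\}$ for $m\ge1$, all of which have odd numerator. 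Two dyadic rationals of depths $|A|$ and $|B|$ can differ by exactly $2^{-k}$ only if $\max\{|A|,|B|\}=k$; combined with $|A|<|B|$ this gives $|B|=k$ and $|A|<k$ (or the swapped version). The one subtlety is the case $|A|=0$, where $r\in\{0,1\}$; this is handled by noting $0$ and $1$ have depth $0$ and the same arithmetic applies, and $|B|=k$ follows.

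For part (c), which is the heart of the lemma: fix $(B,s)$ with $B\in[\bn]^k$, so $s=\sum_{i=1}^{k-1}\sigma_i2^{-i}+2^{-k}$ for a unique $(\sigma_i)_{i=1}^{k-1}\subset\{0,1\}$. By part (b), any neighbour $(A,r)$ of $(B,s)$ has $|A|<k$, $A\prec B$, and $r=s\pm2^{-k}$. I would then compute: $s-2^{-k}=\sum_{i=1}^{k-1}\sigma_i2^{-i}$, and I must identify which $A\prec B$ with $r=s-2^{-k}\in\bb_{|A|}$; writing $s-2^{-k}$ in the normal form $\sum_{i=1}^{|A|-1}\tau_i2^{-i}+2^{-|A|}$ forces $|A|=i^-:=\max\{i\le k-1:\sigma_i=1\}$ when this exists (the trailing $1$ becomes the $2^{-|A|}$ term), and $A=B|_{i^-}$; if no $\sigma_i=1$ then $s-2^{-k}=0$, forcing $|A|=0$, $A=\emptyset$, $r=0$. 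Symmetrically, $s+2^{-k}=\sum_{i=1}^{k-1}\sigma_i2^{-i}+2^{-(k-1)}$; carrying the addition, the lowest index $i^+:=\max\{i\le k-1:\sigma_i=0\}$ (if it exists) flips from $0$ to $1$ and all lower-indexed digits beyond it vanish, giving $s+2^{-k}=\sum_{i=1}^{i^+-1}\sigma_i2^{-i}+2^{-i^+}$, whence $|A|=i^+$ and $A=B|_{i^+}$; if all $\sigma_i=1$ then $s+2^{-k}=1$, forcing $A=\emptyset$, $r=1$. Conversely, one checks directly that each $(A,r)$ so described satisfies $A\prec B$ and $|r-s|=2^{-k}$, hence is a neighbour. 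The bookkeeping with the carry in the ``$s+2^{-k}$'' case is the step I expect to be the main obstacle, since one must be careful that after the carry all digits at indices strictly between $i^+$ and $k$ (which were all $1$) are cleared and the digit at $i^+$ becomes the new ``$2^{-|A|}$'' terminal bit — this is exactly the passage from $\sum_{i\le k-1}\sigma_i 2^{-i}+2^{-(k-1)}$ to the $\bb_{i^+}$-normal form.

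For part (d): given $(B,r)\in[\bn]^k\times\bb_k$, part (c) produces exactly two neighbours of $(B,r)$ in $V_k$, one with second coordinate $r+2^{-k}$ (call it $(B_+,r+2^{-k})$) and one with second coordinate $r-2^{-k}$ (call it $(B_-,r-2^{-k})$), where $B_\pm$ are the initial segments of $B$ given by $i^\pm$. It remains to observe that $B_+,B_-\in[\bn]^{<k}$, so $r\pm2^{-k}\in\bb_{|B_\pm|}$ and these vertices lie in $V_{k-1}$ by part (a), and that $|(r+2^{-k})-(r-2^{-k})|=2^{1-k}=2^{-(k-1)}$, so $\{(B_+,r+2^{-k}),(B_-,r-2^{-k})\}\in E_{k-1}$; uniqueness is immediate since $B_\pm$ and $r\pm2^{-k}$ are uniquely determined by $(B,r)$. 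This completes the proof.
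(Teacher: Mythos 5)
Your proof is correct and follows essentially the same route as the paper: parts (c) and (d) are handled exactly as in the text, by analyzing the dyadic expansion of $s$ (including the carry in the $s+2^{-k}$ case) and identifying $i^{\pm}$ with $B_{\pm}=B|_{i^{\pm}}$. You additionally spell out parts (a) and (b), which the paper treats as obvious; your "depth of a dyadic rational" argument for (b) cleanly justifies the assertion the paper makes without proof, namely that an edge forces one of the two sets to have cardinality exactly $k$.
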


\begin{proof}
\begin{enumerate}
\item[c)] Assume $k\ge 1$. From the  definition  of $E_k$ it follows that if for two elements $u$ and $v$ in $V_k$, say $u= (A,r)$ and $v=(B,s)$,  we have $\{u,v\}\in E_k $, then  either $A$ or $B$ has cardinality $k$ and the other set does have cardinality less than $k$.
So assume that $|A|<|B|=k$. Then write $s$ as $s=\sum_{i=1}^k \sigma_i 2^{-i}$, with $\sigma_i\in\{0,1\}^k$, for $i\in\{1,2,\dots,k\}$ and $\sigma_k=1$. Now since $|r-s|=2^{-k}$ and $r\in \bb_{|A|}$ it follows that,

$$r=\begin{cases}
        \sum_{i=1}^{m} \sigma_i 2^{-i} &\text{ if $r<s$ }\\
        \sum_{i=1}^{m-1} \sigma_i 2^{-i} +2^{-m}&\text{ if $r>s$.}
        \end{cases}$$
where $m:=|A|$ and
$$|A|=\begin{cases}
             \max\{ 1\kleq  i\kleq k-1 , \sigma_i\keq1\}, &\text{if $r\kle s$}\\
             \max\{ 1\le i\le k-1  : \sigma_{i}=0\}, &\text{if $r>s$,}
             \end{cases}$$
with $\max(\emptyset):=0$.
\item[d)] Indeed, write $r$ as 
$$r=\sum_{i=1}^k \sigma_i 2^{-i},\text{ with $\sigma_i\in \{0,1\}^k$, and $\sigma_k=1$}.$$
 
Put  $$i^-=\max\{ 1\le i\le k-1 :\sigma_i=1\},$$ and $$i^+=\max\{1\le i  \le k-1:\sigma_{i}=0\},$$ with $\max(\emptyset):=0$.
Then letting $B_-=B|_{i^-}$ and $B_+=B|_{i^+}$,  we deduce that  $$\big\{ (B_-,r-2^{-k}) , (B_+,r+2^{-k})\big\}\in E_{k-1},$$
and $$\big\{(B,r), (B_-,r-2^{-k})\big\} ,\big\{ (B,r), (B_+,r+2^{-k})\big\}\in E_{k}.$$ 
The uniqueness is then clear since according to equation \eqref{E:2.2_1} it is necessary that $A\prec B$ for $\{(A,s),(B,r)\}$ to be in $E_k$, while equation \eqref{E:2.2_2} allows only two possible cardinalities for $A$.
\end{enumerate}
\end{proof}

The graph $G_k$ introduced above is nothing else but a concrete representation of the countably branching diamond graph of depth $k$ defined recursively in the previous section.

\begin{proposition}
For all $k\ge 0$, $G_k$ is graph isomorphic to $D_k^\omega$.
\end{proposition}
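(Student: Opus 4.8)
The plan is to prove the statement by induction on $k$, exhibiting an explicit graph isomorphism $\Phi_k \colon D_k^\omega \to G_k$ that is compatible with the recursive $\oslash$-construction and with the identification $V_j \subset V_k$ from Lemma~\ref{L:2.1}(a). The base cases are immediate: $D_0^\omega$ is the two-vertex graph with one edge, and $G_0 = (\{(\emptyset,0),(\emptyset,1)\}, \{\{(\emptyset,0),(\emptyset,1)\}\})$ is exactly that; similarly $D_1^\omega = K_{2,\omega}$ matches the explicitly written-out $G_1$ via $s(K_{2,\omega}) \mapsto b^\omega$, $t(K_{2,\omega}) \mapsto t^\omega$, and the $j$-th middle vertex $\mapsto (\{j\}, \tfrac12)$.

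For the inductive step, I would assume $G_k \cong D_k^\omega = K_{2,\omega}^{\oslash^k}$ and show $G_{k+1} \cong D_k^\omega \oslash K_{2,\omega} = D_{k+1}^\omega$. The key combinatorial observation is Lemma~\ref{L:2.1}(d): every vertex $(B,r) \in G_{k+1}$ with $|B| = k+1$ sits ``inside'' a unique edge $\{(B_-, r - 2^{-(k+1)}), (B_+, r + 2^{-(k+1)})\}$ of $G_k$, and these are exactly the vertices of $G_{k+1}$ not already in $V_k \subset V_{k+1}$ (here one uses that, after rescaling, $V_k$ as a subset of $V_{k+1}$ consists of the pairs $(A,r)$ with $|A| \le k$ and $r \in \bb_{|A|} \subset \bb_k \cdot$-rescaled appropriately — this bookkeeping about the dyadic sets $\bb_m$ is the one genuinely fiddly point). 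So the new vertices of $G_{k+1}$ are naturally indexed by (edge of $G_k$) $\times$ (non-terminal vertex of $K_{2,\omega}$): given an edge $e = \{(B_-, \cdot), (B_+, \cdot)\}$ of $G_k$ with $|B_+| = |B_-| + 1$, say $B_+ = B_- \cup \{m\}$ wait — more precisely $B_\pm = B|_{i^\pm}$ for the relevant initial segments — the vertices $(B, r)$ with $B|_{i^-} = B_-$, $B|_{i^+} = B_+$ are parametrized by the choice of the ``continuation'' of $B$, i.e.\ by a natural number, matching the branching index of $K_{2,\omega}$. This is precisely the vertex set $V(H) \cup (E(H) \times (V(G)\setminus\{s,t\}))$ of the $\oslash$-product with $H = G_k$ and $G = K_{2,\omega}$.

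Having matched vertex sets, I would check that the edge sets correspond. An edge of $D_k^\omega \oslash K_{2,\omega}$ is either (i) an ``old'' vertex of $H$ joined to a ``new'' vertex lying in an edge $e$ incident to it, or (ii) [there are no middle-to-middle edges in $K_{2,\omega}$ since $K_{2,\omega}$ has no edges among its non-terminal vertices]. On the $G_{k+1}$ side, Lemma~\ref{L:2.1}(c) says an edge incident to a top-cardinality vertex $(B,s)$, $|B| = k+1$, goes to $(B|_{i^-}, s - 2^{-(k+1)})$ or $(B|_{i^+}, s + 2^{-(k+1)})$ — exactly the two endpoints of the $G_k$-edge that $(B,s)$ subdivides, so each new vertex is joined precisely to the two endpoints of its ``host edge,'' which is exactly the pattern of case (i) of the $\oslash$-product (and Lemma~\ref{L:2.1}(b) guarantees there are no other edges). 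Conversely every edge of $G_{k+1}$ has exactly one endpoint of cardinality $k+1$ by Lemma~\ref{L:2.1}(b), so this accounts for all of $E_{k+1}$.

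The main obstacle, and the only place requiring care rather than routine verification, is the precise dyadic bookkeeping: one must verify that under the rescaling identifying the $G_k$-level coordinates (in $\bigcup_m \bb_m$ with denominators up to $2^k$) with the $G_{k+1}$-level coordinates (denominators up to $2^{k+1}$), the inclusion $V_k \hookrightarrow V_{k+1}$ of Lemma~\ref{L:2.1}(a) really does land on ``old'' vertices and that the map $(B,r) \mapsto (e, \text{branch})$ on new vertices is a genuine bijection with the right index set — in particular that distinct $(B,r)$ give distinct (edge, branch) pairs, which follows from the uniqueness clause in Lemma~\ref{L:2.1}(d) together with the fact that $r$ together with $B|_{i^-}$ and $B|_{i^+}$ determines the remaining element(s) of $B$. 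Once this correspondence is pinned down, the isomorphism $\Phi_{k+1}$ is forced on vertices and the edge-preservation is the two-line check above; associativity of $\oslash$ (noted in the excerpt) makes the induction self-consistent. Finally, since $G_{k+1}$ and $D_{k+1}^\omega$ are connected unweighted graphs equipped with the shortest-path metric, the graph isomorphism is automatically an isometry, completing the proof.
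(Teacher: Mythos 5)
Your proposal is correct and follows essentially the same route as the paper: induction on $k$, with the base case checked directly and the inductive step resting on Lemma~\ref{L:2.1}(c),(d) to identify the cardinality-$(k+1)$ vertices of $G_{k+1}$ with pairs (edge of $G_k$, branching index in $\bn$), exactly matching the $\oslash$-product with $K_{2,\omega}$. The paper merely writes the correspondence in the opposite direction, via the explicit formula $\varphi_{k+1}((e^\omega,j)) = (B\cup\{b_k+j\},\tfrac{r+s}{2})$, which settles the dyadic bookkeeping you flag as the fiddly point.
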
 

\begin{proof}
Recall that $D_k^\omega:=(V_k^\omega, E_k^\omega)$ and $G_k:=(V_k,E_k)$. We will think of $s(D_k^\omega)$ (resp. $t(K_{2,\omega})$) to be the bottom vertex (resp. the top vertex) of $D_k^\omega$, we shall use the notation $b_k^\omega$ for $s(D_k^\omega)$ and $t_k^\omega$ for $t(D_k^\omega)$.
We shall prove by induction on $k\in\bn_0$ the following statement:

\medskip
 
$H_k$: there exists a graph isomorphism $\varphi_{k}:D_k^\omega\to G_k$ such that 
\begin{enumerate}[i)]
\item for all $n\le k-1$, $\varphi_k|_{D_n^\omega}=\varphi_n$,
\item and if $e^\omega=\{u,v\}\in E_k^\omega$, and if $(A,r)=\varphi_k(u)$ and $(B,s)=\varphi_k(v)$ then $r=s+2^{-k}$.
\end{enumerate}

For the base case, define the map $\varphi_0: V_0^\omega\to V_0$, $t_0^\omega\mapsto (\emptyset,1)$, and $b_0^\omega\mapsto (\emptyset,0)$. It is clear that $\varphi_0$ statisfies $H_0$.

\medskip

Now assume that $(H_k)$ holds and recall that $V_{k+1}^\omega=V_k^\omega\cup \big\{ (e^\omega,j) : e^\omega\in E_k^\omega,\, j\in \bn \big\}$. We define, as required for the old vertices, $\varphi_{k+1}(v)=\varphi_k(v)$ if $v\in V_k^\omega$. For the new vertices $(e^\omega,j)$ where $e^\omega=\{u,v\}\in E_k^\omega$ and $j\in \bn$ we choose $\varphi_{k+1}((e^\omega,j))$ as follows. If $\varphi_k(u)=(A,r)$ and $\varphi_k(v)=(B,s)$, then by the induction hypothesis $\big\{(A,r), (B,s)\big\}\in E_k$, and we can, using Lemma \ref{L:2.1} (c), assume without loss of generality 
that $B=\{b_1,b_2,\ldots, b_k\}\in[\bn]^k$, with $b_1<b_2<\ldots <b_k$, $s\in \bb_k$, $A\prec B $ and $r=s\pm 2^{-k}\in \bb_m$, with $m<k$. So we put
$$\varphi_{k+1}((e^\omega,j)):= \Big( B\cup\{b_{k}+j\}, \frac{r+s}2\Big).$$
First we note that since $s\in \bb_k$ and $r\in \bb_m$,  for some $m<k$, it follows that $\frac{r+s}2\in \bb_{k+1}$. Thus $\varphi_{k+1}((e^\omega,j))\in V_{k+1}\setminus V_k$.
This shows that $\varphi_{k+1}$ is well defined, and that $\varphi_{k+1}(V_{k+1}^\omega\setminus V_k^\omega)\subset V_{k+1}\setminus V_k$.
The claim that $\varphi_{k+1}$ is bijective and that for $u,v\in V_k^\omega$, $\{u,v\}\in E_k^\omega$ $\iff$ $\{\varphi_{k+1}(u),\varphi_{k+1}(v)\}\in E_n$, can be now obtained 
from Lemma \ref{L:2.1} (d) above, the definition of $D_{k+1}^\omega$ and the induction hypothesis.

\end{proof}

\subsection{Basic metric properties of the countably branching diamond graphs}\label{S:2.3}

In this section we discuss the basic metric properties of the countably branching diamond graphs which will be needed in the sequel. 

First of all, let $d_k$ denote the shortest path metric on $D_k^\omega$. Since for two vertices $(A,r)$ and $(B,s)$ in $V_k$ to form an edge in $D_k^\omega$ it is necessary that $|r-s|=2^{-k}$, it follows that 
\begin{equation}
\label{E:2.3_1} 
d_k\big((A,r),(B,s)\big)\ge |r-s|2^{k}.
\end{equation}

Secondly we observe that for any $(A,r)\in V_k$ 
\begin{equation}
\label{E:2.3_2} 
d_k\big(b_k^\omega, (A,r)\big)=r2^{k}\text{ and }d_k\big((A,r),t_k^\omega\big)=(1-r)2^{k}.
\end{equation}
Equality \eqref{E:2.3_2} can be generalized as follows. We say that two vertices $(A,r)$ and $(B,s)$ in $D_k^\omega$ lie on the same vertical path if there exists a simple increasing path $\cP=\big((P_0,p_0),(P_1,p_1),\ldots, (P_n,p_n)\big)$ of length $n$ for some $n\le 2^k$ in $D_k^\omega$, \ie $\big\{(P_{m-1},p_{m-1}),(P_{m},p_{m})\big\}\in E_k$ for $m=1,2,\ldots, n$ and $p_0<p_1<\ldots< p_n$ (and thus $p_m=m2^{-k}+p_0$), such that $(A,r)=(P_0,p_0)$ and $(B,s)=(P_n,p_n)$ or $(B,s)=(P_0,p_0)$ and $(A,r)=(P_n,p_n)$. In that case we  observe that  $\cP$ is the shortest path connecting $(P_0,p_0)$ and $(P_n,p_n)$ and thus 
\begin{equation}\label{E:2.3_3} 
d_k\big((A,r), (B,s)\big)=d_k\big((P_n,p_n), (P_0,p_0)\big)=2^{k}(p_n-p_0)=2^{k}|r-s|.
\end{equation}

Note that for $k\in\bn$ we can write $V_{k}$ as $V_{k}=\bigcup_{j=1}^\infty V^{(j,+)}_{k} \cup \bigcup_{j=1}^\infty V^{(j,-)}_{k}$ where for $j\in\bn$,
\begin{align*}
V^{(j,+)}_{k}= &\Big\{ \Big(\{j\}\cup A, {\scriptstyle\frac{r+1}{2}}\Big):  (A,r)\kin V_{k-1},\,  0< r<1, \text{ and }j<\min(A)\Big\}\\ 
&\bigcup \big\{t_{k}^\omega,(\{j\},\sh)\big\} \\
V^{(j,-)}_{k}= &\Big\{ \Big(\{j\}\cup A, \frac{r}{2}\Big):  (A,r)\kin V_{k-1},\,  0< r< 1, \text{ and }j<\min(A)\Big\}\\
&\bigcup\big\{b_{k}^\omega,(\{j\},\sh)\}.
\end{align*}

\medskip

Let  $k\in\bn$ and  $i\neq j\in \bn$. Note that 
$$V^{(i,+)}_{k}\cap V^{(j,+)}_{k}=\{ t_{k}^\omega\} ,\, V^{(i,-)}_{k}\cap V^{(j,-)}_{k}=\{ b_{k}^\omega\},$$
and
$$V^{(j,+)}_{k}\cap V^{(j,-)}_{k}=\{(j,\sh)\}, V^{(i,+)}_{k}\cap V^{(j,-)}_{k}=\emptyset.$$
Secondly,  there is no edge between any element of $V^{(i,+)}_{k}\setminus\{ t_{k}^\omega\} $ and any element of $V^{(j,+)}_{k}\setminus\{ t_{k}^\omega
\} $,
 any element of $V^{(i,-)}_{k}\setminus\{ b_{k}^\omega\} $ and any element of $V^{(j,-)}_{k}\setminus\{ b_{k}^\omega\} $, any element of $V^{(i,+)}_{k}$ and any element of $V^{(j,-)}_{k} $, any element of $V^{(i,+)}_{k}\setminus\{(i,\sh)\} $ and any element of $V^{(i,-)}_{k}\setminus\{(i,\sh)\}$. It follows therefore from \eqref{E:2.3_3} that,
\begin{itemize}
\item if $i\neq j\in\bn$ then for all $(A,r)\in V^{(i,+)}_k$ and $(B,s)\in V^{(j,+)}_k$,
\begin{align}
\label{E:2.3_4}
d_{k}\big((A,r),(B,s)\big)&= d_{k}\big((A,r),t_{k}^\omega\big)+d_{k}\big(t_{k}^\omega,(B,s)\big)= (2-r-s)2^{k}
\end{align}
\item if $i\neq j\in\bn$ then for all $(A,r)\in V^{(i,-)}_k$ and $(B,s)\in V^{(j,-)}_k$,
\begin{align}
\label{E:2.3_5}
d_{k}((A,r),(B,s))&= d_{k}\big((A,r),b_{k}^\omega\big)+d_{k}\big(b_{k}^\omega,(B,s)\big)= (r+s)2^{k}
\end{align}
\item if $i\neq j\in\bn$ then for all $(A,r)\in V^{(i,+)}_k$ and $(B,s)\in V^{(j,-)}_k$,
\begin{align}
\label{E:2.3_6}
d_{k}\big((A,r),(B,s)\big)&=\\  
\min\!\Big\{ d_{k}\big((A,r),&t_{k}^\omega\big)\kplus d_{k}\big(t_{k}^\omega, (B,s)\big),d_{k}\big((A,r), b_{k}^\omega\big)\kplus d_{k}\big(b_{k}^\omega ,(B,s)\big)\Big\}\notag
\end{align}
\item if $j\in\bn$ then for all $(A,r)\in V^{(j,+)}_k$ and $(B,s)\in V^{(j,-)}_k$,
\begin{align}
\label{E:2.3_7}
d_{k}\big((A,r),(B,s)\big)&= d_{k}\big((A,r),(\{j\},{\scriptstyle \frac12})\big)+ d_{k}\big((\{j\},{\scriptstyle \frac12}),(B,s)\big)\\
&= (r-{\scriptstyle \frac12})2^{k}+ ({\scriptstyle \frac12}- s)2^{k}=(r-s)2^{k}\notag\end{align}
We can therefore deduce from \eqref{E:2.3_4}-\eqref{E:2.3_7} that if $i\neq j\in\bn$ then for $(A,r)\in V^{(i,+)}_{k}$ and $(B,s)\in V^{(j,-)}_{k}$,
\begin{align}\label{E:2.3_8}
d_{k}\big((A,r),(B,s)\big)&=2^{k}\cdot\min\big(1-r+{\scriptstyle\frac12}+ {\scriptstyle\frac12} -s,   r+s \big) \\
&=2^{k}\cdot\min\big(2-r -s,   r+s \big) \notag\\
&=2^{k} \begin{cases}   2-r-s &\text{if $r+s\ge 1$} \\
                                                 r+s   &\text{if $r+s\le 1$.}  \end{cases} 
\notag
\end{align}
\end{itemize}

For $j\in\bn$, let $A+j$ (resp. $A-j$) be the set obtained by adding (resp. substracting) $j$ to each element of $A$, with the convention that $\emptyset\pm j=\emptyset$. Define also $s_j(A):=\{j\}\cup (A+j)$ whenever $A\ne \emptyset$. Note that if $A$ is an element in $[\bn]^{\le k}$ for some $k$, then $s_j(A)$ belongs to $[\bn]^{\le k+1}$. For $j\in\bn$, if $A=\{j,a_2,\dots, a_m\}$ we also define $s_j^{-1}(A):=A\setminus\{j\}-j$ and note that $s_j^{-1}\circ s_j(A)=A$. Using the two combinatorial shifts $s_j$ and $s_j^{-1}$ one can define (essentially) two natural isometries based on the self-similarities of the diamond graphs. The first two isometries are 
\begin{align}\label{E:2.3_9}
I^{(j,-)}_k: &V^{(j,-)}_{k}\to V_{k-1}\\
                &(A,r)\mapsto \big(s_j^{-1}(A), 2r\big),\text{ if } (A,r)\!\not=\!b_{k}^\omega \text{ and } b_{k}^\omega\mapsto  b_{k-1}^\omega,\notag
\end{align}
and
\begin{align}\label{E:2.3_10}
I^{(j,+)}_k: &V^{(j,+)}_{k}\to V_{k-1}\\
                 &(A,r)\mapsto \big(s_j^{-1}(A), 2r-1)\big),\text{ if } (A,r)\!\not=\!t_{k}^\omega \text{ and } t_{k}^\omega\mapsto  t_{k-1}^\omega,\notag
\end{align}
which are the canonical isometries from a lower (resp. upper) diamond in $D_{k}^\omega$ of depth $k-1$ onto $D_{k-1}^\omega$.

The last isometry,
\begin{align}\label{E:2.3_11}
F^{(j,+)}_k:&V^{(j,+)}_{k}\to V_{k-1}\\
                 &(A,r)\mapsto \big(s_j^{-1}(A), 2(1-r)\big),\text{ if } (A,r)\!\not=\!t_{k}^\omega  \text{ and }
 t_{k}^\omega\mapsto  b_{k-1}^\omega,\notag
\end{align}
is an isometry of an upper diamond in $D_{k}^\omega$ of depth $k-1$ onto $D_{k-1}^\omega$ that flips the vertices upside down.

%\begin{proof} \item[h)] Indeed for the first map, note that for $0\le r\le 1$  and $i=0,1,2,\ldots, k$ we have that  $r\kin B_i \iff 1-r\kin B_i $, and that for $(A,r),(B,s)\in V_k$ we have $\{(A,r),(B,s)\}\in E_k\iff \{(A,1-r),(B,1-s)\}\in E_k$. For the second map note that $\bphi^{(j)}_k = \bphi^{(j,-)}_k\circ \big(\bphi^{(j,+)}_k\big)^{-1}$\end{proof}

\section{Embeddability of the countably branching diamond graphs}\label{S:3}

\subsection{Embeddability into Banach spaces containing particular $\ell_\infty$-trees}\label{S:3.1}
In this section our main embedding theorem is proven. The embedding is based on the existence of certain trees in the target space. For the sake of clarity the study of the existence in Banach spaces of this technical device is postponed to Section \ref{S:4}.  
\subsubsection{Good $\ell_\infty$-trees of arbitrary height}\label{S:3.1.1}
A linear ordering $(A_i)_{i\in\bn}$ of the set 
 $[\bn]^{\le n}$ is called {\em compatible on $ [\bn]^{\le n}$} if 
 for any $i,j$ in $\bn$:
 \begin{align}\label{E:3.1.1_1}
 \text{if $\max(A_i)<\max(A_j)$ then $i<j$}.
 \end{align}
In other words, one can only  assign  the number $j\in \bn$ to an element  $A\in[\bn]^{\le n}$, if for all elements  $B\in [\bn]^{\le n}$, with $\max(B)<\max(A)$, were already  counted.
It is easy to see that such a linear ordering exists and that always $A_1=\emptyset$, and  $A_2=\{1\}$.

\begin{defn}
We say that a Banach space $X$ contains $(C,D)$-good $\ell_\infty$-trees of arbitrary height if there are constants $C,D\ge 1$ such that for any $n\in \bn$, 
and any compatible linear ordering $(A_i)_{i\in \bn}$  of $[\bn]^{\le n}$ there are a vector-tree $(x_A)_{A\in[\bn]^{\le n}}$ in $S_X$  and a functional-tree $(x^*_A)_{A\in[\bn]^{\le n}}$ in $S_{X^*}$
satisfying the following properties:
\begin{equation}\label{E:3.1.1_2}
\hskip -3cm\text{for all } A,B\in [\bn]^{\le n}, \text{ with }\max(B)>\max(A),
\end{equation}
\begin{equation*}
x^*_A(x_A)=1 \text{ and } x^*_A(x_B)=0,
\end{equation*}
\begin{equation}\label{E:3.1.1_3}
\hskip -1.7cm\text{ for all } (a_i)_{i=0}^n\subset \br, \text{ and all }B=\{b_1,b_2,\dots,b_n\}\in [\bn]^n,
\end{equation}
\begin{equation*}
\frac{1}{C}\|(a_i)_{i=0}^n\|_\infty\le\|a_0x_\emptyset +a_1 x_{\{b_1\}}+\cdots +a_n x_{\{b_1,\dots, b_n\}}\|_X \le C\|(a_i)_{i=0}^n\|_\infty,
\end{equation*}
\begin{equation}\label{E:3.1.1_4}
\hskip -3.3cm \text{ for every } i\le j, \text{ every } (a_m)_{m=0}^j\subset \br, \text{ one has
}
\end{equation}
\begin{equation*}
\Big\|\sum_{m=1}^i a_m x_{A_m}\Big\|_X\le D\Big\|\sum_{m=1}^j a_m x_{A_m}\Big\|_X.
\end{equation*}

If a Banach space $X$ contains for every $\vp>0$, $(1+\vp,1+\vp)$-good $\ell_\infty$-trees of arbitrary height, we say that $X$ contains good $\ell_\infty$-trees of arbitrary height almost isometrically.
\end{defn}

Note that condition \eqref{E:3.1.1_3} means that every branch $(x_A)_{A\preceq B}$ is $C^2$-equivalent to the $\ell_\infty^{n+1}$-unit vector basis, while condition \eqref{E:3.1.1_4} says that the sequence $(x_{A_i})_{i\in\bn}$ (where $(A_i)_{i\in\bn}$ is the above chosen compatible linear ordering of $[\bn]^{\le n}$) is basic, with a basis constant not exceeding $D$.
\begin{rem}
If $X$ has a bimonotone FDD  (or more generally if $X$ embeds into a space with a
bimonotone FDD), then the  sequence $(x_{A_i})_{i\in\bn}$ can chosen to be block sequence of that FDD (or an arbitrary small perturbation of a block sequence),
which implies that $(x_{A_i})_{i\in\bn}$ is also bimonotone (or has a bimonotonicity constant which is arbitrarily close to 1). 
\end{rem}

\begin{ex}\label{Ex:1}
 If $X=c_0$, then the vector-tree $(x_A)_{A\in [\bn]^{\le n}}$ together with the functional-tree $(x_A^*)_{A\in[\bn]^{\le n}}$ where
$x_A=e_{\max(A)}$ and $x^*_A=e^*_{\max(A)}$ form a $(1,1)$-good $\ell_\infty$-tree of height $n$.
\end{ex}

\subsubsection{The embedding}\label{S:3.3.2}

\begin{theorem}\label{T:3.3}
Assume $Y$ contains good $\ell_\infty$-trees of arbitrary height almost isometrically, then for every $\vp>0$ and every $k\kin\bn$ there exists $\Psi_k: D_k^\omega\to Y$, such that
if $x$ and $y$ belong to the same vertical path then
\begin{equation*}
d_k(x,y) \le \big\| \Psi_k(x)-\Psi_k(y)\big\|\le (1+\varepsilon)  d_k(x,y),
\end{equation*}
and if $x$ and $y$ do not belong to the same vertical path then 
\begin{equation*}
\frac{d_k(x,y)}{C(\vp)} \le \big\| \Psi_k(x)-\Psi_k(y)\big\|\le (1+\varepsilon)  d_k(x,y),
\end{equation*} 
where $C(\vp)=6(1+\vp)$. 

\smallskip

Moreover, if $Y$ contains good $\ell_\infty$-trees of arbitrary height almost isometrically and if $(y_{A_i})_{i\in\bn}$ is bimonotone, where $(y_A)_{A\in [\bn]^{\le n}}$ is the vector-tree, and $(A_i)_{i\in\bn}$ is a compatible linear ordering, then $\Psi_k$ can be defined such that $C(\vp)=3$.
\end{theorem}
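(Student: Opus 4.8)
Fix $\vp>0$ and, for each $k$, let $(x_A)_{A\in[\bn]^{\le k}}\subset S_Y$ and $(x^*_A)_{A\in[\bn]^{\le k}}\subset S_{Y^*}$ be a $(1+\vp,1+\vp)$-good $\ell_\infty$-tree (for the \emph{moreover} part, the one whose vector-tree is bimonotone along a compatible ordering). I would define $\Psi_k$ directly on the coding of Section~\ref{S:2}: for $A=\{a_1<\dots<a_m\}\in[\bn]^{\le k}$ and $r\in\bb_m$, writing the dyadic expansion $r=\sum_{i=1}^{m-1}\sigma_i2^{-i}+2^{-m}$ when $m\ge1$, putting $\beta_0=0$ and $\beta_l=\sum_{i=1}^{l}\sigma_i2^{-i}$ for $1\le l\le m-1$, set
\[
\Psi_k(A,r):=2^kr\,x_\emptyset+\sum_{l=1}^{m-1}2^k(r-\beta_l)\,x_{A|_l}+2^{k-m}x_{A|_m},
\]
with the convention that an empty sum is $0$ (so $\Psi_k(\emptyset,r)=2^kr\,x_\emptyset$, $\Psi_k(\{a_1\},\tfrac12)=2^{k-1}(x_\emptyset+x_{\{a_1\}})$, and so on). This formula is forced by the self-similarity of the diamonds: with the isometries $I_k^{(j,\pm)},F_k^{(j,+)}$ of Section~\ref{S:2.3} one has $\Psi_k|_{V_{k-1}}=2\Psi_{k-1}$, and $\Psi_k$ restricted to the lower block $V_k^{(j,-)}$ is $2^kr\,x_\emptyset$ plus the image of $\Psi_{k-1}\circ I_k^{(j,-)}$ under the tree-shift $x_A\mapsto x_{s_j(A)}$, and similarly (flipped) on $V_k^{(j,+)}$. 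Three elementary features will be used throughout: (i) $x^*_\emptyset\Psi_k(A,r)=2^kr$, since $x^*_\emptyset x_{A|_l}=0$ for $l\ge1$ by biorthogonality \eqref{E:3.1.1_2}; (ii) each $\Psi_k(A,r)$ lies in the span of the \emph{single} branch $\{x_{A|_l}:0\le l\le|A|\}$; (iii) all coefficients appearing in $\Psi_k(A,r)$ lie in $[0,2^kr]$, the value $2^kr$ being attained on $x_\emptyset$.

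\textbf{The lower bound on vertical pairs, and the upper bound everywhere.} If $x=(A,r)$ and $y=(B,s)$ lie on a common vertical path, then by \eqref{E:2.3_3} $d_k(x,y)=2^k|r-s|$, and (i) gives $\|\Psi_k(x)-\Psi_k(y)\|\ge|x^*_\emptyset(\Psi_k(x)-\Psi_k(y))|=2^k|r-s|=d_k(x,y)$ at once. For the upper estimate on a vertical pair I would argue as follows. If one endpoint is $b_k^\omega$ or $t_k^\omega$, it is immediate from (ii)--(iii) and the branch-$\ell_\infty$ estimate \eqref{E:3.1.1_3} (every coefficient of $\Psi_k(A,r)$ is at most $2^kr=d_k(b_k^\omega,(A,r))$). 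Otherwise $a_1=b_1$ --- a vertical path cannot change the level-one branching index except by passing through $b_k^\omega$ or $t_k^\omega$ --- and, peeling off one level of the self-similar structure at a time, the path meets a descending chain of ``turning-point'' necks which splits $\Psi_k(x)-\Psi_k(y)$ into differences of $\Psi_k$-values at successive necks; the explicit formula shows each such difference is supported on a single branch with largest coefficient equal to $2^k$ times the corresponding height increment, and the increments sum to $|r-s|$. Applying \eqref{E:3.1.1_3} to each branch-supported piece and summing yields $\|\Psi_k(x)-\Psi_k(y)\|\le(1+\vp)\,d_k(x,y)$. For a pair \emph{not} on a common vertical path, Lemma~\ref{L:2.1} together with \eqref{E:2.3_4}--\eqref{E:2.3_8} shows that a geodesic from $x$ to $y$ runs through a vertex $z\in\{b_k^\omega,t_k^\omega,\text{a neck}\}$ lying on a common vertical path with each of $x,y$; then $\|\Psi_k(x)-\Psi_k(y)\|\le\|\Psi_k(x)-\Psi_k(z)\|+\|\Psi_k(z)-\Psi_k(y)\|\le(1+\vp)(d_k(x,z)+d_k(z,y))=(1+\vp)\,d_k(x,y)$.

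\textbf{The lower bound on non-vertical pairs.} Here I expect the real work. With $z$ as above one has $d_k(x,y)=d_k(x,z)+d_k(z,y)$; one may take $z=b_k^\omega$ (the case $z=t_k^\omega$ being symmetric, and the case of a neck being reduced by descending into the relevant block), so $d_k(x,y)=2^k(r_x+r_y)$. Now $x^*_\emptyset$ only delivers $\|\Psi_k(x)-\Psi_k(y)\|\ge2^k|r_x-r_y|$, which is worthless when $r_x\approx r_y$; instead I would test $\Psi_k(x)-\Psi_k(y)$ against $x^*_D$, where $D$ is the initial segment of whichever endpoint carries, at the least level where the branches to $x$ and $y$ diverge, the largest ``fresh-direction'' coefficient --- by the formula this coefficient is $2^kr_x$ on $x_{\{a_1\}}$ or $2^kr_y$ on $x_{\{b_1\}}$, and the larger of $r_x,r_y$ is $\ge\tfrac12(r_x+r_y)$, so the coefficient is $\gtrsim\tfrac12 d_k(x,y)$. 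Using the ordering convention on $[\bn]^{\le k}$ and $x^*_Dx_E=0$ for $\max E>\max D$, all of $\Psi_k(x)-\Psi_k(y)$ except the distinguished term and a bounded number of ``earlier'' cross-terms is annihilated; the cross-terms are handled by the basis condition \eqref{E:3.1.1_4} (they are values of a basis projection of norm $\le D=1+\vp$, and the near-disjointness imposed by the geometry keeps their total below $\tfrac12$ of the distinguished term). Combining gives $\|\Psi_k(x)-\Psi_k(y)\|\ge d_k(x,y)/\big(6(1+\vp)\big)$. If the vector-tree is bimonotone the projections occurring here have norm $1$: this removes the factor from \eqref{E:3.1.1_4} together with one factor $2$, and a slightly more careful choice of test functional then yields $\|\Psi_k(x)-\Psi_k(y)\|\ge d_k(x,y)/3$.

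\textbf{Where the difficulty lies.} The main obstacle is the non-vertical lower bound: one must (a) enumerate the (finitely many) configurations of $z$ and of how the two branches separate, (b) in each configuration exhibit a single functional from $(x^*_A)$ seeing a definite fraction of $d_k(x,y)$ while the remaining cross-terms stay controllable via biorthogonality and the basis constant, and (c) secure a constant independent of $k$ --- which is possible precisely because each $\Psi_k(A,r)$ lives in one branch, so the $\ell_\infty$-constant $1+\vp$ is paid once rather than compounded through the $k$ levels. The structural input to the vertical upper bound --- that $\Psi_k(x)-\Psi_k(y)$ decomposes into branch-supported vectors whose maximal coefficients sum to $d_k(x,y)$ --- is the other point needing attention, but it is a bookkeeping consequence of the explicit formula and the self-similar recursion; the well-definedness of $\Psi_k$ and the identity $\Psi_k|_{V_{k-1}}=2\Psi_{k-1}$ are routine.
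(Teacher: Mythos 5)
Your overall strategy is the paper's: embed via the set-theoretic coding, support $\Psi_k(A,r)$ on the single branch $\{x_{A|_l}\}_{l\le |A|}$, use $x^*_\emptyset$ for the vertical lower bound, and extract the non-vertical lower bound from functionals at the first level where the two branches separate. But your explicit formula for the coefficients is wrong, and the error sits exactly where you declare things "routine": the closed form does \emph{not} satisfy the flipped self-similarity you claim on the upper blocks $V_k^{(j,+)}$; it satisfies the \emph{unflipped} one $(A,r)\mapsto(s_j^{-1}(A),2r-1)$. Concretely, take $k=3$, $B\in[\bn]^3$, $s=7/8$: your formula gives $\Psi_3(B,7/8)=7x_\emptyset+3x_{B|_1}+x_{B|_2}+x_{B|_3}$, while the unique neighbour above $(B,7/8)$ is $t_3^\omega$ with $\Psi_3(t_3^\omega)=8x_\emptyset$, so by \eqref{E:3.1.1_3} this edge is stretched to length at least $3/(1+\vp)$ although $d_3=1$. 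Worse, for $(B,1-2^{-k})$ adjacent to $t_k^\omega$ the coefficient of $x_{B|_1}$ is $2^{k-1}-1$, so the map is not even uniformly bi-Lipschitz in $k$. The point is that the coefficient of $x_{A|_l}$ must tend to $0$ as $(A,r)$ approaches the \emph{top} of the level-$l$ sub-diamond containing it, because that top vertex has a label of length $<l$; hence in an upper half one must substitute $2(1-r)$, not $2r$, at each level of the recursion. Unwinding, the correct coefficient is $2^{k-l}\tau^l(r)$ with $\tau$ the tent map $\tau(r)=\min(2r,2-2r)$, whereas your $2^k(r-\beta_l)$ is the doubling-map version; the two agree only when the vertex stays in lower halves at every level (which is why your checks at $k\le 2$ pass). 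This flip is precisely what the paper's recursion $c_{k+1}(i,r)=c_k(i-1,2(1-r))$ for $r\ge\frac12$ encodes.

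Separately, even with the corrected coefficients, your non-vertical lower bound is only a hope at the decisive step. The functionals of \eqref{E:3.1.1_2} annihilate only the $x_E$ with $\max(E)>\max(D)$, so $x^*_D(\Psi_k(x)-\Psi_k(y))$ still contains uncontrolled terms $x^*_D(x_E)$ for earlier $E$, including $x_\emptyset$ whose coefficient is of order $2^k$. The paper resolves this by passing to the bimonotone renorming $\tn\cdot\tn$ of $[y_{A_j}:j\in\bn]$ (this is exactly where the factor $2$ separating $6(1+\vp)$ from $3$ enters), testing against the coordinate functionals $\yt^*_{\{i\}},\yt^*_{\{j\}},\yt^*_\emptyset$ of the basic sequence, and treating two points lying in the same block $V_{k+1}^{(j,\pm)}$ by induction combined with bimonotonicity rather than by a single test functional. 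Your claim that the cross-terms "stay below $\tfrac12$ of the distinguished term" is unsubstantiated; making it precise requires either this renorming or the coordinate functionals of the basic sequence together with an explicit accounting of the resulting constant.
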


\begin{proof}
\textit{Definition of the coefficients:} 
For $k\in\bn$, we define inductively a family of coefficients 
$\big\{c_k(i,r):\!0\kleq i\kleq k, r\kin\bigcup_{i\le m\le k}\bb_m\big\}\subset \{0,1,2\ldots 2^{k}\}$, as follows:

for $k=1$, let $c_1(0,1)=2$, $c_1(0,0)=0$, and $c_1(0,\sh)=c_1(1,\sh)=1$,

and for $k\ge 1$, $c_{k+1}(i,r)$, $0\le i\le k+1$ and $r\in \bigcup_{m=i}^{k+1} \bb_m$ will  be chosen as follows:
 \begin{align*}
 c_{k+1}(0,r)&=r2^{k+1}  \text{ for all }r\in\bigcup_{m=0}^{k+1}\bb_m  \\
 \intertext{and for $i=1,2\ldots, k+1$ and $r\in\bigcup_{m=i}^{k+1}\bb_m$, we put}
 c_{k+1}(i,r)&=\begin{cases} c_{k}(i-1,2r) &\text{if $0<r\le \sh$}\\
  c_{k}(i-1,2(1-r)) &\text{if $ \sh\le r<1$.}\end{cases}
 \end{align*}

Note that $c_{k+1}(i,r)$ is well defined for all $i=0,1,\ldots, k+1$, and $r\in \bigcup_{m=i}^{k+1} \bb_j$, since if $r=\sh$, both formulae lead to the same term, and whenever $r\in \bb_m$, for some $i\le m\le k+1$, it follows that $2r\in \bb_{m-1}$, in case that $r\le \sh$, and $2-2r\in \bb_{m-1}$ in case that $r\ge \sh$. 

\medskip

Assume that $Y$ contains good $\ell_\infty$-trees of arbitrary height almost isometrically. We shall prove by induction on $k\in\bn$ the following claim. The theorem follows easily. 

\begin{claim}\label{C:3.1} For any $\vp>0$ there is $\eta(\vp)\in(0,\vp]$ such that for every $(1+\eta(\vp),1+\eta(\vp))$-good $\ell_\infty^k$-tree whose tree or vectors (resp. tree of functionnals) is $(y_\sigma)_{\sigma\in [\bn]^{\le k}}$ (resp. $(y^*_\sigma)_{\sigma\in [\bn]^{\le k}}$), the map
$$\Psi_k: D_k^\omega\to Y, \quad (A,r)\mapsto \sum_{D\preceq A} c_k(|D|,r) y_D,$$
has the following properties:

\begin{align}
\label{E:3.1.2_1} &\Psi_k(b_k^\omega)=0 \text{ and }\Psi_k(t_k^\omega)=2^{k}y_\emptyset \\
\label{E:3.1.2_2} &y^*_\emptyset\big(\Psi_k(A,r)\big)= r2^{k}\text{ for all } (A,r)\in V_k^\omega,\\
\label{E:3.1.2_3} &\Psi_k \text{ is } (1+\vp)-\text{Lipschitz}, 
\intertext{if $x,y\in V_k^\omega$ belong to the same vertical path}
\label{E:3.1.2_4} &\big\| \Psi_k (x)-\Psi_k (y)\big\|_Y\ge d_k(x,y)
\intertext{if $x,y\in V_k^\omega$ do not belong to the same vertical path}
\label{E:3.1.2_5}&\frac{d_k(x,y)}{C(\vp)} \le \big\| \Psi_k (x)-\Psi_k (y)\big\|_Y,\end{align}
where $C(\vp)=6(1+\vp)$ in full generality, and $C(\vp)=3$ if $(y_{A_i})_{i\in\bn}$ is bimonotone. 
\end{claim}

We now proceed with the induction. For $k=1$ we proceed as follows. Consider an $(1+\vp,1+\vp)$-good $\ell_\infty$-tree of height $1$ in $Y$. Thus $\Psi_1: D_1^\omega\to Y$, is the following map 
$$\Psi_1(b_1^\omega)=0, \, \Psi_1(t_1^\omega)=2y_\emptyset, \text{ and }\Psi_1(\{n\},{\scriptstyle \frac12})= y_\emptyset + y_{\{n\}},\text{ for $n\in\bn$}.$$
\eqref{E:3.1.2_1} is clearly satisfied and \eqref{E:3.1.2_2} follows from condition \eqref{E:3.1.1_2}.

For $m<n$ in $\bn$ it follows from \eqref{E:3.1.1_2} that  
$$\big\|\Psi_1(\{n\},\sh)-\Psi_1(\{m\},\sh)\big\|_Y=\|y_{\{n\}}-y_{\{m\}}\|_Y\begin{cases} \le 2 =d_1\big(\{m\},\sh),(\{n\},\sh)\big),\\ \ge |y^*_{\{m\}}(y_{\{n\}}-y_{\{m\}})|=1. \end{cases}$$  
From \eqref{E:3.1.1_3} we deduce that 
$$1\le \big\| \Psi_1(t_1^\omega)-\Psi_1(\{n\},\sh)\big\|_Y=\|y_\emptyset-y_{\{n\}}\|_Y\le 1+\vp.$$
Similiarly we verify that $1\le \big\| \Psi_1(b_1^\omega)-\Psi_1(\{n\},\sh)\big\|_Y\le 1+\vp$.
Since, moreover,  $\|\Psi_1(t_1^\omega)-\Psi_1(b_1^\omega)\|_Y=2\|y_\emptyset\|=2$, we deduce that  \eqref{E:3.1.2_3}, \eqref{E:3.1.2_4} and \eqref{E:3.1.2_5} hold.
 
\medskip

Assume now that Claim \ref{C:3.1} is true for some $k\in\bn$.  Let $\vp>0$ and pick $\eta\le \eta(\frac{\vp}{2})$, to be chosen small enough later, then for all pair of trees $(y_A)_{A\in [\bn]^{\le k}}\subset S_Y$, $(y_A)_{A\in [\bn]^{\le k}}\subset S_{Y^*}$ that form a $(1+\eta,1+\eta)$-good $\ell_\infty^{k+1}$-tree one has that \eqref{E:3.1.2_1}-\eqref{E:3.1.2_5} hold for $\frac{\vp}{2}$.
 
\medskip

Fix now a $(1+\eta,1+\eta)$-good $\ell_\infty^{k+1}$-tree given by $(y_A)_{A\in [\bn]^{\le k+1}}\subset S_Y$ and  
$(y^*_A)_{A\in [\bn]^{\le k+1}}\subset S_{Y^*}$, for some compatible linear ordering $(A_n)_{n\in\bn}$ of $[\bn]^{\le k+1}$.
 For $A=A_m$ and $B=A_n$, we write $A<_{\text{lin}}B$ if $m<n$.

For each $j\in\bn$ consider the trees $(y_{s_j(A)})_{A\in[\bn]^{\le k}}$ and $(y^*_{s_j(A)})_{A\in[\bn]^{\le k}}$ in $S_Y$ and $S_{Y^*}$. In particular $y_{s_j(\emptyset)}=y_{\{j\}}$ and  $y^{*}_{s_j(\emptyset)} =y^*_{\{j\}}$.

Define a binary relation $<_{\text{lin}}^{(j)} $ on $[\bn]^{\le k}$ by $A<_{\text{lin}}^{(j)} B$ if and only if $s_j(A)<_{\text{lin}} s_j(B)$. The following claim is straightforward. 

\begin{claim}
For every $j\in\bn$, the binary relation $<_{\text{lin}}^{(j)}$ is a compatible linear order on $[\bn]^{\le k}$ for which the trees $(y_{s_j(A)})_{A\in[\bn]^{\le k}}$ and $(y_{s_j(A)}^*)_{A\in[\bn]^{\le k}}$, in $S_Y$ and $S_{Y^*}$ respectively, form an $(1+\eta,1+\eta)$-good $\ell_\infty^k$-tree.
\end{claim}

Therefore it follows from our induction hypothesis that for $j\in\bn$ the map 
$$\Psi_k^{(j)}: V_k\to Y, \quad (A,r)\mapsto \sum_{D\preceq A} c_k(|D|, r) y_{s_j(D)},$$ 
satisfies conditions \eqref{E:3.1.2_1}-\eqref{E:3.1.2_5} with 
$(y_{s_j(A)})_{A\in[\bn]^{\le k}}$ and $(y_{s_j(A)})_{A\in[\bn]^{\le k}}$ instead of 
 $(y_A)_{A\in[\bn]^{\le k}}$ and $(y^*_A)_{A\in[\bn]^{\le k}}$, and $\frac{\vp}2$ instead of $\vp$. The map $\Psi_{k+1}$ can actually be written in terms of the maps $\Psi_k^{(j)}$. Indeed, if $A=\{j,a_2,\ldots, a_m\}\kin[\bn]^{\le k+1}\setminus\{\emptyset\}$ and  $r\kin \bb_m$, then 
   
   \begin{align}
   \Psi_{k+1}(A,r)&= \sum_{D\preceq A} c_{k+1}(|D|,r)y_D\notag\\
\label{E:3.1.2_6}&=
   r2^{k+1} y_\emptyset+
   \begin{cases} \sum_{\{j\}\preceq D\preceq A} c_{k}(|D|-1, 2r)y_D &\text{if $0\kle r\kleq \sh$} \\
     \sum_{\{j\}\preceq D\preceq A} c_k(|D|-1, 2(1-r)) y_D&\text{if $\sh \kleq r\kle 1$} 
\end{cases}\\
   &= r 2^{k+1}y_\emptyset +  \begin{cases} \Psi_k^{(j)}\big(s_j^{-1}(A),2r\big) &\text{if $0\kle r\kleq \sh$} \\
                           \Psi_k^{(j)}\big(s_j^{-1}(A),2(1\kminus r)\big) &\text{if $\sh\kle r\kle 1$}. 
\end{cases}\notag\\
 \label{E:3.1.2_7}  &= r 2^{k+1}y_\emptyset +  \begin{cases} \Psi_k^{(j)}\big(I_k^{(j,-)}((A,r))) &\text{if $0\kle r\kleq \sh$} \\
                           \Psi_k^{(j)}\big(F_k^{(j,+)}((A,r))\big) &\text{if $\sh\kle r\kle 1$}. 
\end{cases}
\end{align} 

\textit{Verification of \eqref{E:3.1.2_1} and \eqref{E:3.1.2_2}:}
Elementary computations show that \\$\Psi_{k+1}(b_{k+1}^\omega)=0$, whereas $\Psi_{k+1}(t_{k+1}^\omega)=2^{k+1} y_\emptyset$ follows from \eqref{E:3.1.2_6}.

\medskip

\textit{Verification of \eqref{E:3.1.2_3}:} In order to verify the inequality \eqref{E:3.1.2_3} we can assume that $\{(A,r), (B,s)\}\in E_{k+1}$, and thus, there is a $j\in \bn$ so that $(A,r), (B,s)\in V_{k+1}^{(j,+)}$ or  $(A,r), (B,s)\in V_{k+1}^{(j,-)}$. Secondly, by Lemma \ref{L:2.1} ($\text{b}$)  it follows that $|r-s|=2^{-(k+1)}$, and $|B|=k+1$ and $A\prec B$, or $|A|=k+1$ and $B\prec A$. Let us first assume that  $(A,r), (B,s)\in V_{k+1}^{(j,+)}$, and, without loss of generality that $A\prec B$, and $|B|=k+1$.

\begin{itemize}
\item If $A=\emptyset$, and thus $r=1$, $(A,r)=t_{k+1}^\omega$ and $1-s=2^{-(k+1)}$, it follows from \eqref{E:3.1.2_6}  that
\begin{align}\label{E:3.1.2_8}
\big\|\Psi_{k+1}(A,r)\kminus\Psi_{k+1}(B,s)\big\|_Y&= \big\| (1-s)2^{k+1}y_\emptyset\kminus\Psi_k^{(j)}(F_k^{(j,+)}(B,s))\big\|_Y\\
&\le\!(1\kplus\eta)^2\!\max\big\{ 1\!, \big\|\Psi_k^{(j)}(F_k^{(j,+)}(B,s)\big)\|_Y\big\},\notag
\end{align}
where to get \eqref{E:3.1.2_8} we used first the upper bound in inequality \eqref{E:3.1.1_3}, then the obvious fact that $\sup_i(|a_i|)=\max\{|a_j|,\sup_{i\ne j}|a_i|\}$,  and finally the lower bound in inequality \eqref{E:3.1.1_3}. 
We are now in position to use the induction hypothesis 
\begin{align*}
\big\|\Psi_k^{(j)}(F_k^{(j,+)}(B,s))\big\|_Y&=\big\|\Psi_k^{(j)}(F_k^{(j,+)}(B,s))- \Psi_k^{(j)}(b_{k}^\omega)\big\|_Y\\
&=\big\|\Psi_k^{(j)}(F_k^{(j,+)}(B,s))- \Psi_k^{(j)}(F_k^{(j,+)}(t_{k+1}^\omega)\big\|_Y\\
&\le (1+\frac{\vp}{2})d_k\big(F_k^{(j,+)}(B,s), F_k^{(j,+)}(t_{k+1}^\omega)\big)\\
&= (1+\frac{\vp}{2})d_{k+1}\big((B, s), t_{k+1}^\omega\big)= (1+\frac{\vp}{2}).
\end{align*}

\item If $A\not=\emptyset $, and thus $j=\min A$, we deduce similarly  from \eqref{E:3.1.2_6} and \eqref{E:3.1.1_3}  that 
 \begin{align*}
\big\|\Psi_{k+1}&(A,r)-\Psi_{k+1}(B,s))\big\|_Y\\
&= \big\| (s-r)2^{-(k+1)}y_\emptyset + \Psi_k^{(j)}(F_k^{(j,+)}((A,r)))-\Psi_k^{(j)}(F_k^{(j,+)}((B,s))\big)\|_Y\\
&\le (1\kplus \eta)^2 \max\{1, \|\Psi_k^{(j)}(F_k^{(j,+)}((A,r)))-\Psi_k^{(j)}(F_k^{(j,+)}((B,s))\big)\|_Y\}
\end{align*}
and, using again the induction hypothesis 
\begin{align*}
\big\|\Psi_k^{(j)}(F_k^{(j,+)}((A,r)))-&\Psi_k^{(j)}(F_k^{(j,+)}((B,s))\big)\|_Y  \\
&\le (1\kplus\frac{\vp}{2}) d_k\big(F_k^{(j,+)}(A,r),F_k^{(j,+)}((B,s))\big) \\
&\le (1\kplus\frac{\vp}{2}) d_{k+1}\big((A,r),(B,s)\big)
=1\kplus\frac{\vp}{2}. 
\end{align*}

\end{itemize}
Thus in both cases we obtain 
$$ \big\|\Psi_{k+1}(A,r)-\Psi_{k+1}(B,s))\big\|\le (1+\eta)^2(1+\frac{\vp}{2})\le 1+\vp.$$

The case where $(A,r), (B,s)\in V_{k+1}^{(j,-)}$, for some $j\in\bn$, is obtained in a similar way using the isometry $I_k^{(j,-)}$. 

\medskip

\textit{Verification of \eqref{E:3.1.2_4}:}
We observe that whenever the vertices $(A,r)$ and $(B,s)$ belong to the same vertical path, one has $d_{k+1}\big((A,r),(B,s)\big)=2^{k+1}|r-s|$ but by \eqref{E:3.1.2_2} and \eqref{E:3.1.2_6},
$$2^{k+1}|r-s|= |y^*_\emptyset \big( \Psi_{k+1}(A,r)-\Psi_{k+1}(B,s)\big)|\le  \big\|\Psi_{k+1}(A,r)-\Psi_{k+1}(B,s)\big\|_Y,$$
which proves \eqref{E:3.1.2_4}.

\medskip

\textit{Verification of \eqref{E:3.1.2_5}:}
We introduce  the following renorming of the subspace  $Z=[y_{A_{j}}:j\in\bn]:=\overline{\spa(y_{A_{j}}:j\in\bn)}$ of $Y$. Recall that by assumption
$(y_{A_n})_{n\in\bn}$ is a basis for $Z$ whose basis constant is not greater than $1+\eta$. So 
 for $z=\sum_{n=1}^\infty a_n y_{A_n}$ we put
 $$\tn z\tn =\max_{m\le n} \Big\|\sum_{j=m}^n a_j y_{A_j}\Big\|_Y.$$
It is well known that $(y_{A_i})_{i\in\bn}$ becomes bimonotone with respect to  $\tn \cdot\tn$  and that 
 \begin{equation*}
 \Big\| \sum_{n=1}^\infty a_n y_{A_n}\Big\|_Y \le \bbtn  \sum_{n=1}^\infty a_n y_{A_n} \bbtn\stackrel{(\star)}{\le} 2(1+\eta) \Big\|\sum_{n=1}^\infty a_n y_{A_n}\Big\|_Y.
 \end{equation*}
We will now show inequality \eqref{E:3.1.2_5} for $\tn\cdot\tn$ with $C(\vp)=3$, and \eqref{E:3.1.2_5} for $\|\cdot\|_Y$ will follow from ($\star$). Let $\Yt$ be the Banach space $Z$ with the norm $\tn\cdot\tn$, then $(y_{A_j})_{n\kin\bn}$ is a bimonotone basis of $\Yt$. 

\begin{claim}
The trees $(y_{A})_{A\in[\bn]^{\le k+1}}$ and $(z^*_{A})_{A\in[\bn]^{\le k+1}}$, where $z^*_A=y^*_A|_Z$ for $A\in[\bn]^{\le k}$ are in $S_{\Yt}$ and $S_{\Yt^*}$, respectively, and form an $(1+\eta,1+\eta)$-good $\ell_\infty^k$-tree for $\Yt$.
\end{claim}
The coordinate functionals, which we denote by  $(\yt^*_{A_j})_{j\kin\bn}$ are also a basic bimonotone  sequence in $S_{\Yt^*}$. 

\begin{rem}
It is not necessarily true that $\yt^*_A=z^*_A$ for all $A\in [\bn]^{\le k+1}$, but nevertheless the tree $(\yt^*_{A_j})_{j\kin\bn}$ is a tree in $S_{\Yt^*}$, and form with $(y_{A_j})_{j\in\bn}$ an $(1+\eta,1+\eta)$-good $\ell_\infty^k$-tree for $\Yt$.
\end{rem} 
 %%%%%%%%%%%%%%%%%%%%%%%%%%%%%%%%%%%%%%%%%%%%%%%%%%%%
First of all, for any  $(A,r)\in V_{k+1}$  it follows from \eqref{E:3.1.1_2}, that 
 \begin{align*}
\tn \Psi_{k+1}(t_{k+1}^\omega) -\Psi_{k+1}(A,r)\tn &\ge |\yt^*_\emptyset (\Psi_{k+1}(t_{k+1}^\omega) -\Psi_{k+1}(A,r))|\\
&=2^{k+1}(1-r)=d_{k+1}\big(t_{k+1}^\omega,(A,r)\big), \text{ and }\\
\tn \Psi_{k+1}(A,r)-\Psi_{k+1}(b_{k+1}^\omega)\tn &\ge |\yt^*_\emptyset (\Psi_{k+1}(A,r)-\Psi_{k+1}(b_{k+1}^\omega))|\\
&=2^{k+1}r=d_{k+1}\big((A,r), b_{k+1}^\omega\big).
\end{align*}

Thus, the following five cases remain to be taken care off:
 
\begin{itemize} 
\item If $(A,r),(B,s)\in V_{k+1}^{(j,+)}\setminus\{t_{k+1}^\omega\}$, for some $j\in\bn$, 
 \begin{align*}
 \Psi_{k+1}(A,r)&-\Psi_{k+1}(B,s)=\\ &(r-s)2^{k+1}y_\emptyset+ \Psi_k^{(j)}(F_k^{(j,+)}(A,r))
 &-\Psi_k^{(j)}(F_k^{(j,+)}(B,s)\big).\\
\end{align*}
And thus, using the bimonotonicity and the induction hypothesis, we deduce that 
 \begin{align*}
 \tn\Psi_{k+1}(A,r)-\Psi_{k+1}(B,s)\tn&\ge  \btn\Psi_k^{(j)}(F_k^{(j,+)}((A,r)))-\Psi_k^{(j)}(F_k^{(j,+)}((B,s))\big)\btn\\
 &\ge \frac1{3}d_{k}\big(F_k^{(j,+)}((A,r)),F_k^{(j,+)}((B,s))\big)\\
&=\frac1{3}d_{k+1}\big((A,r)\big), (B,s)\big).\\
 \end{align*}
 
\item With  similar arguments we can show that for $j\in\bn$ and $(A,r),(B,s)\in V^{(j,-)}_{k+1}\setminus\{b_{k+1}^\omega\}$ we have
\begin{align*}
 \tn\Psi_{k+1}(A,r)-\Psi_{k+1}(B,s)\tn&\ge\frac1{3}d_{k+1}\big((A,r)\big), (B,s)\big).
 \end{align*}

\item If $(A,r)\in V_{k+1}^{(i,+)}\setminus\{t_{k+1}^\omega\}$ and  $(B,s)\in V_{k+1}^{(j,+)}\setminus\{t_{k+1}^\omega\}$, with $i\not= j$.

\medskip

Let us first note that since we have already shown \eqref{E:3.1.2_4} and since  $\tn\cdot\tn\ge \|\cdot\|$ it follows that \eqref{E:3.1.2_4} is also satisfied for the norm $\tn\cdot \tn$ instead of $\|\cdot\|$. Let us assume without loss of generality that $s\ge r$.  Since $\yt^*_{\{i\}}$ is the coordinate functional for $y_{\{i\}}$  (in $Y$) it follows from the expression of $\Psi_{k+1}$ in \eqref{E:3.1.2_6}, and the definition of the coefficients that  
\begin{align*}
\tn \Psi_{k+1}(A,r)- \Psi_{k+1}(B,s) \tn &\ge |\yt^*_{\{i\}}\big( \Psi_{k+1}(A,r)- \Psi_{k+1}(B,s))|\\
&=2^k\cdot2(1-r)=2^{k+1}(1-r)\\
&= d_{k+1}((A,r),t_{k+1}^\omega)\\
&\ge \frac12\big( d_{k+1}((A,r),t_{k+1}^\omega\big)+d_{k+1}\big((B,s),t_{k+1}^\omega)\big)\big)\\
&=\frac12d_{k+1}\big((A,r),(B,s)\big).
\end{align*}

\item If $(A,r)\in V_{k+1}^{(i,-)}\setminus\{b_{k+1}^\omega\}$ and  $(B,s)\in V_{k+1}^{(j,-)}\setminus\{b_{k+1}^\omega\}$, with $i\not= j$. This case can be treated with a similar argument, and the same inequality is obtained. 

\item $(A,r)\in V_{k+1}^{(i,+)}\setminus\{t_{k+1}^\omega\}$ and  $(B,s)\in V_{k+1}^{(j,-)}\setminus\{b_{k+1}^\omega\}$, with $i\not= j$.
 
\medskip

We apply the functionals $\yt^*_{\{i\}}$, $\yt^*_{\{j\}}$ and $\yt^*_\emptyset$ to the vector $\Psi_{k+1}(A,r)-\Psi_{k+1}(B,s)$ and obtain
 from \eqref{E:3.1.2_6},
 
\begin{align*}
\big\|\Psi_{k+1}(A,r)-\Psi_{k+1}(B,s)\big\|&\ge |\yt^*_{\{i\}}\big(\Psi_{k+1}(A,r)-\Psi_{k+1}(B,s)\big)|\\
&=2^k\cdot2(1-r)=2^{k+1}(1-r),\\
\big\|\Psi_{k+1}(A,r)-\Psi_{k+1}(B,s)\big\|&\ge |\yt^*_{\{j\}}\big(\Psi_{k+1}(B,s)-\Psi_{k+1}(A,r)\big)|\\
&=2^k2s=2^{k+1}s,\\
\big\|\Psi_{k+1}(A,r)-\Psi_{k+1}(B,s)\big\|&\ge |\yt^*_{\emptyset}\big(\Psi_{k+1}(A,r)-\Psi_{k+1}(B,s)\big)|=2^{k+1}(r-s).
 \end{align*}
 Note that if $r-s\le \frac13$, and $s\le \frac13$, then  $1-r\ge \frac13$, 
 and thus
 $$\big\|\Psi_{k+1}(A,r)-\Psi_{k+1}(B,s)\big\|\ge\frac13  2^{k+1}=\frac13\text{diam}(V_{k+1},d_{k+1})$$
 which implies that 
 $$\big\|\Psi_{k+1}(A,r)-\Psi_{k+1}(B,s)\big\|\ge \frac13 d_{k+1}\big(  (A,r), (B,s) \big),$$
 and finishes the verification of the inequality \eqref{E:3.1.2_5} .
 
\end{itemize}

\end{proof}

\subsection{Banach spaces containing good $\ell_\infty$-trees of arbitrary height}\label{S:4}

The main goal of this section is give sufficient conditions for a Banach space to contain good $\ell_\infty$-trees of arbitrary height almost isometrically. More precisely we will show that if $X$ is a separable reflexive Banach space which has an unconditional asymptotic structure, and with $\Sz(X^*)>\omega$, then $X$ contains good $\ell_\infty$-trees of arbitrary height almost isometrically. The difficulty is to get $(1+\vp,1+\vp)$-good $\ell_\infty$-trees of arbitrary height for every $\vp>0$ rather than merely $(C,D)$-good $\ell_\infty$-trees of arbitrary height for some constant $C,D\ge 1$. A somehow technical and lengthy argument is therefore needed. It can be split into essentially three steps. First we will show that if $X$ is a separable reflexive Banach space with an unconditional asymptotic structure, and if $\Sz(X^*)>\omega$, then for all $n\in\bn$, $\ell_\infty^n$ is in the asymptotic structure of $X$ up to some constant $C\ge 1$. Then we argue, using  an asymptotic analogue of a classical argument of James about the non-distortability of $\ell_\infty$, that for all $n\in\bn$, $\ell_\infty^n$ is in the asymptotic structure of $X$ (i.e. with constant $C$ arbitraily close to $1$). Finally, we prove that if for all $n\in\bn$ $\ell_\infty^n$ is in the asymptotic structure of $X$ then $X$ contains good $\ell_\infty$-trees of arbitrary height almost isometrically.  In Section \ref{S:4.1} we introduce all the ingredients needed to carry out the proof of the main theorem which forms Section \ref{S:4.2}.

\subsubsection{Asymptotic properties and trees}\label{S:4.1}\ \\

\noindent\textbf{Unconditional asymptotic structure.}

\medskip

\noindent We recall the following notion which was introduced in \cite{MMT}. Let $X$ be a Banach space, and let us denote the cofinite dimensional subspaces of $X$ by $\cof(X)$. For $n\in\bn$ let $E$ be an $n$-dimensional Banach space with a normalized basis $(e_j)_{j=1}^n$.
 We say that  {\em $E$  is in the $n$th asymptotic structure of $X$,}  or {\em $X$ contains $E$ asymptotically} and write $E\in \{X\}_n$, if $ \forall \vp>0$, 
 \begin{align}\label{E:4.1.1_1}
\forall X_1\kin \cof(X),\,\exists  x_1&\kin S_{X_1} %\forall X_2\kin \cof(X),\,\exists  x_2\kin S_{X_2}
,\ldots,
 \forall X_n\kin \cof(X),\,\exists  x_n\kin S_{X_n} \text{ such that }\\ 
 (x_j)_{j=1}^n &\text{ is $(1+\vp)$-equivalent to $(e_j)_{j=1}^n$.}\notag
\end{align}

For $1\le p\le \infty$, we say that  $\ell_p^n$ {\em is  in the  $n$-th asymptotic structure of $X$,  up to a constant $C\ge1$}, if  for  each 
 $n\in\bn$  there is an $n$-dimensional space $E_n$, with a normalized basis $(e^{(n)}_j)_{j=1}^n$, which is $C$-equivalent to the $\ell_p^n$-unit vector basis, so that 
 $E_n\in\{X\}_n$, for all $n\in\bn$.
 
We will need the following lemma which can be extracted from  \cite[1.8.3]{MMT}

\begin{lemma}\label{L:4.1}
If  $\ell_\infty^n$ is contained in the $n$-asymptotic structure of $X$  up to a constant $C$, then $\ell_\infty^n$ is in the $n$-th asymptotic structure of $X$.
\end{lemma}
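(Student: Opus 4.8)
The plan is to mimic James's classical non-distortion argument for $\ell_\infty^n$, adapted to the asymptotic setting, where the role of ``subspaces of large codimension'' is played by the cofinite-dimensional subspaces appearing in the definition of the asymptotic structure, and the role of ``blocks further out in the basis'' is played by successive choices of vectors in the asymptotic game. First I would fix $n\in\bn$ and $\vp>0$ and, by hypothesis, a space $E_n$ with normalized basis $(e^{(n)}_j)_{j=1}^n$ that is $C$-equivalent to the $\ell_\infty^n$-unit vector basis and lies in $\{X\}_n$. The goal is to produce, for every $\delta>0$, a realization of $\ell_\infty^n$ with constant $1+\delta$; by iterating the $C$-equivalence inside itself one gets $\ell_\infty^n$-realizations whose constant improves geometrically, and the clean way to package this is a fixed-point / self-improvement argument on the best achievable asymptotic constant.

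More concretely, for a fixed $m$ let $\lambda_m$ be the infimum of all $\lambda\ge 1$ such that $\ell_\infty^m$ is realized with constant $\lambda$ in $\{X\}_m$ (this infimum is attained in the relevant sense by a diagonal/compactness argument over the asymptotic game, since the game is over a fixed finite length $m$). The key step is the inequality $\lambda_{m}\le \sqrt{\lambda_{m}}\cdot(1+\vp)$-type self-improvement: starting from a $\lambda$-realization $(x_j)_{j=1}^m$ of $\ell_\infty^m$, one plays the asymptotic game again inside the tail cofinite-dimensional subspaces to split each coordinate direction, or alternatively one observes that the sup-norm structure means that for a $\lambda$-$\ell_\infty^m$ basis, renormalizing and re-running the game on the subspace spanned far out produces an $\ell_\infty^m$-sequence with constant $\sqrt\lambda(1+\vp)$. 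Iterating, $\lambda_m\le \lim_k \lambda^{2^{-k}}(1+\vp)^{2}= (1+\vp)^2$, and since $\vp>0$ was arbitrary, $\lambda_m=1$. This gives that $\ell_\infty^m$ is in $\{X\}_m$ in the sense of \eqref{E:4.1.1_1}, for every $m$, which is exactly the conclusion.

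The main obstacle, and the place where care is needed, is justifying that the asymptotic constant can be ``square-rooted'': the reason this works for $\ell_\infty$ (and not for $\ell_p$, $p<\infty$) is that $\|(a_j)\|_\infty$ is unchanged under splitting a coordinate into two equal copies, so a $\lambda$-isomorphic copy of $\ell_\infty^{2m}$, read as two interleaved copies of $\ell_\infty^m$ and then averaged appropriately, yields an $\ell_\infty^m$ copy with constant $\sqrt\lambda$ up to the $(1+\vp)$ slack coming from one extra round of the asymptotic game. Formally one must check that the relevant vectors can be produced by a legal play of the asymptotic game — i.e. that after fixing the first block of choices one still has cofinite-dimensional subspaces in which to make the second block — which is automatic since $\cof(X)$ is a filter. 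One then invokes that the paper has reduced everything to finite length-$n$ games, so no infinite-dimensional subtleties (reflexivity, etc.) intervene here; reflexivity is only used earlier, to get the constant-$C$ version. I would cite \cite[1.8.3]{MMT} for the precise bookkeeping of the asymptotic game and simply assemble the James-type iteration on top of it.
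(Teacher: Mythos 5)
You have the right two ingredients in mind --- James's non-distortion of $c_0$ combined with stability properties of the asymptotic structure, which is exactly the paper's route --- but the central improvement step as you describe it would fail. James's trick does not square-root the constant at a \emph{fixed} length: the self-improvement inequality ``$\lambda_m\le\sqrt{\lambda_m}(1+\vp)$'' is not what the argument yields (and if it held literally it would give $\lambda_m\le(1+\vp)^2$ in one line, making your iteration superfluous). What James's lemma actually provides is a trade of \emph{length} for \emph{constant}: for every $m$, $C$ and $\vp$ there is $N=N(m,C,\vp)\gg m$ such that any normalized basis $C$-equivalent to the $\ell_\infty^N$ unit vector basis admits a normalized \emph{block basis} of length $m$ that is $(1+\vp)$-equivalent to the $\ell_\infty^m$ basis; the proof stabilizes the quantity $\sup\{\|\sum_{i\in A}\theta_i e_i\|\}$ over signed blocks along the basis, and this needs $N$ of order $m\log C/\log(1+\vp)$. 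Neither of your proposed mechanisms supplies this: reading $\ell_\infty^{2m}$ as ``two interleaved copies of $\ell_\infty^m$, averaged'' does not improve the equivalence constant (normalizing the paired blocks can just as well worsen the lower bound), and ``re-running the game on the subspace spanned far out'' produces fresh vectors in far-out cofinite-dimensional subspaces that bear no relation to the previously chosen ones which would improve the $\ell_\infty$-constant of the combined system. The improvement must happen \emph{inside a single long realization}, by passing to blocks.

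Once the step is set up this way, the one further ingredient you need --- and which your proposal never states --- is the fact from \cite[1.8.3]{MMT} that opens the paper's proof: if $E\in\{X\}_N$ has normalized basis $(e_j)_{j=1}^N$ and $(f_j)_{j=1}^m$ is a normalized block basis of $(e_j)_{j=1}^N$, then the space it spans belongs to $\{X\}_m$. With that, the lemma is immediate: given $m$ and $\vp>0$, take $N=N(m,C,\vp)$, use the hypothesis to place a $C$-$\ell_\infty^N$ space in $\{X\}_N$, extract a $(1+\vp)$-$\ell_\infty^m$ block basis by James, and conclude $\ell_\infty^m\in\{X\}_m$ since $\vp$ was arbitrary. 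Your remark that $\cof(X)$ is a filter is true but is not the point; the block-basis stability of the asymptotic structure is what does the work of transferring the extracted copy back into the game.
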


\begin{proof}[Sketch of proof] It was observed that for any $E\in\{X\}_n$, with a normalized basis $(e_j)_{j=1}^n$,  and any subspace $F$ of $E$ which is  spanned by a normalized block basis $(f_j)_{j=1}^m$ of  $(e_j)_{j=1}^n$ is in $\{X\}_m$. Secondly James's result on the non distordability of $c_0$ implies that for any $m\in\bn$, $C>1$ and any $\vp$ there is an $n:=n(m,C,\vp)$ so that any $n$-dimensional space $E$ with a normed basis $(e_j)_{j=1}^n$, which is $C$-equivalent to the $\ell_\infty^n$ unit vector basis admits a block basis of length $m$ which is $(1+\vp)$-equivalent to the $\ell_\infty^m$ unit vector basis. The conclusion follows. 
\end{proof}

\begin{rem}
Actually, although not needed in this paper, the same is true for all $1\le p\le \infty$, and follows from the  quantitative version of Krivine's Theorem proven by Rosenthal in \cite[Theorem 3.6]{R}
\end{rem}
 
The following property will be crucially used in the sequel.
\begin{defn}\label{D:2.1} 
We say that a Banach space has an {\em unconditional asymptotic structure with constant $C\ge 1$}, or a $C$-unconditional asymptotic structure, if for all $n\in\bn$
 \begin{align}\label{E:4.1.1_2}
 \exists  X_1\kin \cof(X),\,\forall x_1\kin S_{X_1}& \,%\exists X_2\kin \cof(X),\,\exists  x_2\kin S_{X_2}
 ,\ldots, 
 \exists X_n\kin \cof(X), \,\forall  x_n\kin S_{X_n}  \text{ so that }\\
(x_j)_{j=1}^n \text{ is $C$-unconditional.}&\notag
 \end{align}
\end{defn}

\begin{rem}
Having and unconditional asymptotic structure for a Banach space is strictly weaker than having an unconditional basic sequence. For instance, the Argyros-Delyianni space \cite{ArgyrosDelyianni1997} does not have any unconditional basic sequence but has an unconditional asymptotic structure. 
\end{rem}

As noted in \cite{MMT}  the property of having an unconditional asymptotic structure or containing a finite dimensional space $E$ asymptotically can be described in the language of a game between two players. Assume that for $n\kin\bn$  the {\em cofinite player} picks a cofinite dimensional subspace $X_1$ and then the  {\em vector player} an element $x_1\in S_{X_1}$. The players repeat this procedure $n$ times to obtain cofinite dimensional subspaces $X_1,X_2,\ldots,X_n$ and and vectors $x_1,x_2,\ldots, x_n$. The cofinite player has won if  the sequence $x_1,x_2,\ldots, x_n$ is a $C$-unconditional sequence. It follows therefore that  $X$ has an $C$-unconditional asymptotic structure if and only if for every 
$n\kin\bn$ the cofinite player has a winning  strategy in that game.
For $n\kin\bn$ an $n$-dimensional space $E$ with a normalized basis $(e_j)_{j=1}^n$  $E$ is in the $n$-dimensional asymptotic structure of $X$  if and only if for every $\vp>0$ the vector player has a winning 
strategy for every $\vp>0$, if his or her  goal is to obtain a sequence $(x_j)_{j=1}^n$ which is $(1+\vp)$-equivalent to $(e_j)_{j=1}^n$. 

\medskip

\noindent\textbf{Tree reformulation for spaces with separable dual.}

\medskip

\noindent In this paper we will only consider trees of finite height which are countably branching, i.e. families of the form $(x_A)_{A\in[\bn]^{\le n}}$ indexed by $[\bn]^{\le n}$, for some $n\in\bn$.

\medskip
 
Let $X$ be a Banach space and let for some $n\in\bn$, $(x_A)_{A\in[\bn]^{\le n}}$ be a tree in $X$. The tree $(x_A)_{A\in[\bn]^{\le n}}$ is said to be {\em normalized } if $(x_A)_{A\in[\bn]^{\le n}}\subset S_X$. A \emph{node} of $(x_A)_{A\in[\bn]^{\le n}}\subset S_X$ is a sequence of the form $\big\{x_{A\cup \{n\}}:n\kin\bn, n>\max(A)\big\}$, where $A\in [\bn]^{< n}$ (\ie $A$ is  not maximal in $[\bn]^{\le n }$). A tree $(x_A)_{A\in[\bn]^{\le n}}$ is a {\em weakly null tree} if all nodes are weakly null sequences. A \emph{branch} of $(x_A)_{A\in[\bn]^{\le n}}$ is a sequence (of length $n+1$) of the form
 $(x_D)_{D\preceq A}$, where $A\in [\bn]^n$ (\ie $A$ is maximal in $[\bn]^{\le n}$). The following definition is a finitary version of the unconditional tree property introduced in \cite{JohnsonZheng2008}.

\begin{defn}
A Banach space $X$ is said to have the $C$-unconditional finite tree property, if for any $n\in\bn$, any normalized weakly null tree $(x_A)_{A\in[\bn]^{\le n}}$ in $X$ has a branch which is $C$-unconditional.
\end{defn}

It is well-known that if a Banach space has a \emph{separable dual}, a property defined in terms of a game can be reformulated in terms of containment of certain countably branching trees whose branches reflect the desired property.  
 
\begin{lemma}
If $X^*$ is separable, then $X$ has a $C$-unconditional asymptotic structure if and only if $X$ has the $C$-unconditional finite tree property.
\end{lemma}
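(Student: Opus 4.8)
The plan is to exploit the standard dictionary between the two-player game defining asymptotic structure and countably branching trees of finite height. The bridge in one direction is that, when $X^*$ is separable, every cofinite-dimensional subspace eventually almost contains any prescribed normalized weakly null sequence (up to an arbitrarily small perturbation); in the other direction, it is that a normalized weakly null sequence is eventually almost annihilated by any prescribed functional. I would treat the two implications separately.

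\textbf{Asymptotic structure $\Rightarrow$ finite tree property.} Fix $n\in\bn$ and, by hypothesis, a winning strategy $\sigma$ for the cofinite player in the $n$-round game whose target is ``$C$-unconditional''. Given a normalized weakly null tree $(x_A)_{A\in[\bn]^{\le n}}$, I would play the game, reading the cofinite moves off $\sigma$ and letting the vector player respond with small perturbations of tree nodes: once $x_{A|_1},\dots,x_{A|_{j-1}}$ have been selected, $\sigma$ outputs some $X_j\in\cof(X)$, and since the node $\big(x_{(A|_{j-1})\cup\{m\}}\big)_{m>\max(A|_{j-1})}$ is normalized and weakly null while $X_j$ has finite codimension, for $m$ large the vector $x_{(A|_{j-1})\cup\{m\}}$ is $\delta_j$-close to some $z_j\in S_{X_j}$ (write $X_j=\ker T$ for a bounded surjection $T\colon X\to\br^{\codim X_j}$ and use the open mapping theorem to correct $x_{(A|_{j-1})\cup\{m\}}$ into $X_j$, then renormalize). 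Feeding $z_j$ back to $\sigma$ and iterating, after $n$ rounds $(z_j)_{j=1}^n$ is a legal play against the winning strategy $\sigma$, hence $C$-unconditional, while $(z_j)_{j=1}^n$ can be made arbitrarily close to the branch $(x_{A|_j})_{j=1}^n$ of the tree.

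\textbf{Finite tree property $\Rightarrow$ asymptotic structure.} Fix $n$. The $n$-round game is a finite game of perfect information, hence determined, so if the cofinite player has no winning strategy for ``$C$-unconditional'' then the vector player has a winning strategy $\tau$ which, against every legal sequence of cofinite moves, produces vectors whose $n$-tuple is \emph{not} $C$-unconditional. Fixing a countable dense set $(f_m)_{m\in\bn}$ in $S_{X^*}$, the cofinite player loses nothing by restricting to the countably many moves $Z_m:=\{x\in X\colon f_1(x)=\dots=f_m(x)=0\}$, and I would then encode $\tau$ as a tree by setting, for $A=\{a_1<\dots<a_j\}$ and $m>a_j$, the vector $x_{A\cup\{m\}}$ to be the $\tau$-response to the play $Z_{a_1},x_{A|_1},\dots,Z_{a_j},x_{A|_j},Z_m$ (the root node being the $\tau$-responses to $Z_1,Z_2,\dots$). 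Since $x_{A\cup\{m\}}\in Z_m$, every node is normalized and weakly null (given $f\in S_{X^*}$ and $\delta>0$, pick $i$ with $\|f-f_i\|<\delta$; then $|f(x_{A\cup\{m\}})|<\delta$ for all $m\ge i$), so $(x_A)_{A\in[\bn]^{\le n}}$ is a normalized weakly null tree whose every branch, being an outcome of $\tau$, fails to be $C$-unconditional --- contradicting the $C$-unconditional finite tree property. Hence the cofinite player has a winning strategy for each $n$, i.e.\ $X$ has a $C$-unconditional asymptotic structure.

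The main obstacle is that the first implication, as sketched, only delivers a $(C+\vp)$-unconditional branch for each $\vp>0$ rather than a $C$-unconditional one, while the second gives the exact constant for free. I would close this gap by a compactness argument, using that the set of $C$-unconditional $n$-tuples in $(S_X)^n$ is closed together with the fact that in the present (reflexive, separable-dual) setting the unit ball is weakly sequentially compact: run the perturbation argument for $\vp=1/k$ to obtain branches $b^{(k)}$, pass to a subtree along which the approximating index data stabilizes so that a genuine branch of the tree is recovered in the limit, and conclude that this branch is $C$-unconditional. Alternatively, one may simply invoke the form of this game/tree correspondence already available in the literature on asymptotic structure; the details of the constant-preserving refinement are routine but should be spelled out carefully.
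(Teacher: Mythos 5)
Your overall route is the same as the paper's: the paper's entire proof consists of observing that, since $X^*$ is separable, one may replace $\cof(X)$ by the countable decreasing family $Y_m=\{x: x^*_1(x)=\dots=x^*_m(x)=0\}$ and then citing the Odell--Schlumprecht game/tree correspondence; what you have written out is essentially the proof of that cited correspondence. Your second implication (finite tree property $\Rightarrow$ asymptotic structure) is complete and loses nothing in the constant: the game has finite length, so determinacy is just quantifier manipulation, a vector-player winning strategy remains winning when the cofinite player is restricted to the moves $Z_m$, and your encoding of that strategy as a normalized weakly null tree with no $C$-unconditional branch is exactly right. Your first implication correctly produces, for each $\vp>0$, a branch that is a small perturbation of a legal outcome against the cofinite player's winning strategy, hence a $(C+\vp)$-unconditional branch.

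The genuine gap is the compactness argument you propose to remove the $\vp$. The set of $C$-unconditional normalized $n$-tuples is norm-closed but not weakly closed, and the branches $b^{(k)}$ you obtain for $\vp=1/k$ have no reason to stabilize: each coordinate of $b^{(k)}$ ranges over an infinite set, so no pigeonhole extracts a constant subsequence, and since the tree is weakly null any weak limit of non-stabilizing nodes is $0$, not a vertex of the tree. So no ``genuine branch of the tree is recovered in the limit.'' (Reflexivity, which you invoke for weak sequential compactness, is also not a hypothesis of the lemma.) What your argument honestly yields is: for every $\vp>0$, every normalized weakly null tree has a $(C+\vp)$-unconditional branch. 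This is precisely the form in which the Odell--Schlumprecht theorems are stated (with an $\vp$-expansion of the target set on both sides), it is all the paper ever uses, and the paper's own one-line proof by citation glosses over exactly the same $\vp$. You should either state the lemma with this $\vp$-tolerance or, as you suggest at the end, defer to the cited results rather than claim the exact-constant refinement, which your compactness step does not deliver.
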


\begin{proof}
In the case that $X^*$ is separable, and the sequence $(x^*_n)_{n\in\bn}\subset S_X$ is dense in $S_X^*$, we can replace in \eqref{E:4.1.1_1} and \eqref{E:4.1.1_2} 
the set $\cof(X)$ by  the  countable set of cofinite dimensional subspaces $(Y_n)_{n\in\bn}$,  with $Y_n=\cN(x^*_1,x^*_2,\ldots x^*_n)=
\{x\in X: x^*_j(x)=0\text{ for $j=1,2,\ldots, n$}\}.$
 From this fact the conclusion follows from \cite[Theorem 3.3]{OdellSchlumprecht2002} or \cite[Corollary 1]{OdellSchlumprecht2006}.
\end{proof}

The next lemma can be proved along the same line and its proof is omitted.

\begin{lemma}\label{L:4.3}
If $X^*$ is separable, then for any $n\in\bn$ and any $n$-dimensional space $E$ with normalized basis $(e_j)_{j=1}^n$, $E$ is in the $n$-th asymptotic structure of $X$ if and only if for every $\vp>0$ there is a weakly null tree  $(x_A)_{A\in[\bn]^{\le n}}$ in $S_X$ all of  whose branches are $(1+\vp)$-equivalent to $(e_j)_{j=1}^n$. Moreover, in that case $(x_A)_{A\in[\bn]^{\le n}}$ can be chosen inside every cofinite dimensional subspace.
\end{lemma}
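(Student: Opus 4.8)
The plan is to follow the proof of the preceding lemma essentially verbatim, only changing the ``target'' of the asymptotic game from $C$-unconditionality of the chosen sequence to $(1+\vp)$-equivalence with the basis $(e_j)_{j=1}^n$. So I would fix a sequence $(x_n^*)_{n\in\bn}$ dense in $S_{X^*}$ and set $Y_m=\cN(x_1^*,\dots,x_m^*)$, so that $(Y_m)_{m\in\bn}$ is a decreasing sequence of cofinite-dimensional subspaces of $X$ with $\bigcap_m Y_m=\{0\}$, the intersection being trivial because $(x_n^*)$ is total. As in that proof, since $X^*$ is separable one may play the game defining $\{X\}_n$ with the countable filtration $(Y_m)_{m\in\bn}$ in place of $\cof(X)$, and the correspondence between such finite games and countably branching trees of finite height on a space with separable dual---\cite[Theorem 3.3]{OdellSchlumprecht2002}, see also \cite[Corollary 1]{OdellSchlumprecht2006}---turns the existence of a winning strategy for the vector player aiming at a prescribed finite-dimensional property into the existence, for every $\vp>0$, of a weakly null tree $(x_A)_{A\in[\bn]^{\le n}}\subset S_X$ all of whose branches satisfy that property up to $\vp$. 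Specializing this to the target ``$(x_D)_{D\preceq A}$ is $(1+\vp)$-equivalent to $(e_j)_{j=1}^n$'' yields exactly the asserted equivalence.

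Concretely, the two directions run as follows. Given, for each $\vp>0$, a winning strategy of the vector player against the $(Y_m)$'s, one builds the tree level by level: at a node one lets the branching index range over $\bn$ and, for the tail entries, uses vectors inside $Y_m$ with $m\to\infty$; a bounded sequence whose $i$-th term lies in $Y_{m_i}$ with $m_i\to\infty$ is weakly null (test against the dense set $(x_n^*)$), so the resulting tree is weakly null, while every branch is a legal output of the strategy and hence $(1+\vp)$-equivalent to $(e_j)_{j=1}^n$. Conversely, from such a weakly null tree one extracts a winning strategy by pruning: a weakly null sequence has members arbitrarily close to any prescribed cofinite-dimensional subspace $Z$ (since $Z$ is weakly closed of finite codimension), so one can follow the cofinite player's moves down a single branch up to an arbitrarily small perturbation, absorbed in $\vp$.

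For the ``moreover'' clause, given $Z\in\cof(X)$ I would use that $Z^*$ is again separable and that the asymptotic structure localizes: if the cofinite player in the above game always plays subspaces of $Z$---which are cofinite-dimensional in $X$---then the vector player's winning strategy for $X$ outputs vectors lying in $Z$, so $E\in\{Z\}_n$; applying the equivalence just proved with $Z$ in place of $X$ produces, for each $\vp>0$, a weakly null tree inside $S_Z$ with all branches $(1+\vp)$-equivalent to $(e_j)_{j=1}^n$. The only step genuinely doing work is the passage between winning strategies in the filtration game and weakly null trees, i.e.\ the pruning and perturbation bookkeeping; but this is exactly what the cited Odell--Schlumprecht results supply, so nothing beyond the preceding lemma is required.
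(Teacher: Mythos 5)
Your proposal is correct and follows exactly the route the paper intends: the paper omits the proof of this lemma, saying only that it goes ``along the same line'' as the preceding one, namely replacing $\cof(X)$ by the countable filtration $Y_m=\cN(x_1^*,\dots,x_m^*)$ and invoking the Odell--Schlumprecht game/tree correspondence. Your additional detail on the two directions and on localizing to a cofinite-dimensional subspace for the ``moreover'' clause is accurate and merely fills in what the paper leaves implicit.
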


\subsubsection{Sufficient conditions for the containment of good $\ell_\infty$-trees}\label{S:4.2}

In this section we give sufficient conditions that guarantee the presence of good $\ell_\infty$-trees of arbitrary height. The good $\ell_\infty$-trees of arbitrary height are obtained by pruning carefully certain trees. The procedure of \emph{pruning} a tree is what corresponds to the action of taking a subsequence for a sequence. Formally speaking we define a pruning of $(x_A)_{A\in[\bn]^{\le n}}$ as follows. Let $\pi:[\bn]^{\le n}\to [\bn]^{\le n}$ be an order isomorphism with the property that if $F\in[\bn]^{< n}$, then for any $n\in \bn$, so that $n>\max(A)$ and $A\cup\{n\}\in [\bn]^{\le n}$, $\pi(A\cup\{n\})$ is of the form $\pi(A)\cup \{ s_n\}$, where $(s_n)_{n\in\bn}$ is a sequence which  increases with $n$. The family $(x_A)_{A\in\pi([\bn]^{\le n})}$ is called a \emph{pruning} of $(x_A)_{A\in[\bn]^{\le n}}$. Let $\xt_A:=x_{\pi(A)}$ for $A\in [\bn]^{\le n}$, then $(\xt_A)_{A\in [\bn]^{\le n}}$ is simply a \emph{relabeling} of the family $(x_A)_{A\in \pi([\bn]^{\le k})}$, and is also called a pruning of $(x_A)_{A\in[\bn]^{\le n}}$. It is important to note that the branches of a pruning of a tree are a subset of the branches of the original tree, which follows from the observation that for any $B\in [\bn]^{\le n}$, $\{A\in  [\bn]^{\le n}: A\prec \pi(B)\}=
  \{\pi(A): A\prec B\}$. We also note that the nodes of a pruned tree are subsequences of the nodes of the original tree. 
  
\medskip

Let us finally mention, how we usually choose prunings inductively. Let $(A_i)_{i\in\bn}$ be a compatible linear order of $[\bn]^{\le n}$, recall that it means that for any $i,j$ in $\bn$, if $\max(A_i)<\max(A_j)$ then $i<j$. It follows that if $A_i\prec A_j$, then $i<j$, and if $A_i=A\cup\{s\}\in [\bn]^{\le n}$ and $A_j=A\cup\{t\}\in[\bn]^{\le n}$, for some (non maximal) $A\in[\bn]^{<n}$ and $s<t$ in $\bn$, then $i<j$. Necessarily $A_1=\emptyset$, so we must choose $\pi(A_1)=\emptyset$. Assuming now that $\pi(A_1)$, $\pi(A_2), \ldots ,\pi(A_i)$ have been chosen, then $A_{i+1} $ must be of the form $A_{i+1}= A_l\cup \{m\} $, with $1\le l\le i$ and $m>\max(A_l)$. Moreover if, $m>\max (A_l)+1$ and if $A_l\cup\{m-1\}\in [\bn]^{\le n}$ then $A_l\cup\{m-1\}= A_j$ with $l<j<i+1$, and $\pi(A_j)= \pi(A_l)\cup\{s\}$ for some $s$  which has already been chosen. Thus, we need to choose $\pi(A_{i+1})$ to be of the form $\pi(A_l)\cup\{t\}$, where we require that $t>\max(\pi(A_l))$, and, if  $m>\max(A_l)+1$  then we also need $t>s$, where $s$ is defined as above.

We now proceed with the study of the containment of good $\ell_\infty$-trees of arbitrary height almost isometrically in Banach spaces. For the convenience of the reader we reproduce below the definition of good $\ell_\infty$-trees of arbitrary height.

\medskip

\noindent\textbf{Definition 3.1}
We say that a Banach space $X$ contains $(C,D)$-good $\ell_\infty$-trees of arbitrary height if there are constants $C,D\ge 1$ such that for any $n\in \bn$, 
and any compatible linear ordering $(A_i)_{i\in \bn}$  of $[\bn]^{\le n}$ there are a vector-tree $(x_A)_{A\in[\bn]^{\le n}}$ in $S_X$  and a functional-tree $(x^*_A)_{A\in[\bn]^{\le n}}$ in $S_{X^*}$
satisfying the following properties:\\
(16) for all $A,B\in [\bn]^{\le n}$, with $\max(B)>\max(A)$,
\begin{equation*}
 x^*_A(x_A)=1 \text{ and } x^*_A(x_B)=0,
\end{equation*}
(17) for all $(a_i)_{i=0}^n\subset \br$, and all $B\in [\bn]^n$,
\begin{equation*}
 \frac{1}{C}\|(a_i)_{i=0}^n\|_\infty\le\|a_0x_\emptyset +a_1 x_{\{b_1\}}+\cdots +a_n x_{\{b_1,\dots, b_n\}}\|_X \le C\|(a_i)_{i=0}^n\|_\infty,
\end{equation*}
(18) for every $i\le j$, every $(a_m)_{m=0}^j\subset \br$, one has
\begin{equation*}
\|\sum_{m=1}^i a_m x_{A_m}\|_X\le D\|\sum_{m=1}^j a_m x_{A_m}\|_X,
\end{equation*}

%If a Banach space $X$ contains for every $\vp>0$, $(1+\vp,1+\vp)$-good $\ell_\infty$-trees of arbitrary height, we say that $X$ contains good $\ell_\infty$-trees of arbitrary height almost isometrically.

If $X^*$ is separable and if $\ell_\infty^n$ is in the $n$-th asymptotic structure of $X$ up to a constant $D\ge 1$, then by Lemma \ref{L:4.3} there is a normalized weakly tree all of  whose branches are $D(1+\vp)$-equivalent to the canonical basis of $\ell_\infty^{n+1}$. Obviously, such a tree (and any of its prunings) satisfies condition \eqref{E:3.1.1_3}, but \eqref{E:3.1.1_2} and \eqref{E:3.1.1_4} might not hold. In the next lemma we show that for any compatible linear ordering there exists a pruning such that \eqref{E:3.1.1_2} and \eqref{E:3.1.1_4} hold.

\begin{lemma}\label{L:4.4}  Let $X^*$ be separable. Assume that for all $n\in\bn$, $\ell_\infty^n$ is in the $n$-th asymptotic structure of $X$ up to a constant $D\ge 1$. Then for each $\vp>0$, $X$ contains $(D(1+\vp),1+\vp)$-good $\ell_\infty$-trees of arbitrary height. In particular, if $\ell_\infty^n$ is in the $n$-th asymptotic structure of $X$, then $X$ contains good $\ell_\infty$-trees of arbitrary height almost isometrically.
\end{lemma}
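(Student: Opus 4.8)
The plan is to build the desired trees by pruning the weakly null trees furnished by Lemma \ref{L:4.3}, tracking three essentially independent requirements and the $\vp$'s. Fix $\vp>0$ and an auxiliary $\vp'\in(0,\vp)$ (to be pinned down). By hypothesis $\ell_\infty^n$ is in the $n$-th asymptotic structure of $X$ up to the constant $D$, so Lemma \ref{L:4.3} yields, for every $n$ and inside any prescribed cofinite-dimensional subspace, a normalized weakly null tree $(u_A)_{A\in[\bn]^{\le n}}$ whose branches are all $D(1+\vp')$-equivalent to the unit vector basis of $\ell_\infty^{n+1}$. Any pruning/relabelling of such a tree retains this branch property, so \eqref{E:3.1.1_3} will automatically hold with $C=D(1+\vp)$ (choosing $\vp'$ appropriately) no matter how I prune; the whole task therefore reduces to pruning so as to secure \eqref{E:3.1.1_2} and \eqref{E:3.1.1_4}, and to producing the accompanying functional tree.

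I would fix a compatible linear ordering $(A_i)_{i\in\bn}$ of $[\bn]^{\le n}$ and construct the pruning vertex by vertex in the order $A_1,A_2,\dots$, maintaining along the way Hahn--Banach norming functionals $g_{A_j}\in S_{X^*}$ with $g_{A_j}(\tilde u_{A_j})=1$. When it is time to choose $\tilde u_{A_i}$, write $A_i=A_l\cup\{m\}$ with $l<i$; the vertex must be taken from the node hanging below $\tilde u_{A_l}$, which is weakly null. Moving far enough out along this node I would arrange two things at once: first, by the classical Mazur / Bessaga--Pe\l{}czy\'nski argument (it suffices to control the finitely many elements of an $\eta_i$-net of the unit sphere of $\spn\{\tilde u_{A_1},\dots,\tilde u_{A_{i-1}}\}$), that $(\tilde u_{A_1},\dots,\tilde u_{A_i})$ is basic with constant at most $\prod_{j\le i}(1+\eta_j)\le 1+\vp$; second, that
\[
g_{A_j}(\tilde u_{A_i})=0\qquad\text{for every }j<i\text{ with }\max(A_j)<\max(A_i).
\]
The point enabling the second (exact) condition is that, by compatibility, $\{B:\max(B)>\max(A_j)\}$ is contained in a tail of $(A_i)_i$, so at the moment $\tilde u_{A_i}$ is selected every functional it must annihilate has already been produced; and the node below $\tilde u_{A_l}$ may be generated inside the cofinite-dimensional subspace $\bigcap\{\ker g_{A_j}:j<i,\ \max(A_j)<\max(A_i)\}$, since the asymptotic-structure property ``up to $D$'' is inherited by cofinite-dimensional subspaces and hence the tree construction (equivalently, the vector player's strategy in the asymptotic-structure game) may be run with this subspace prescribed in advance. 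Setting $\tilde x^*_A:=g_A$ then gives a functional tree in $S_{X^*}$ for which \eqref{E:3.1.1_2} holds by construction.

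Collecting the outcome: the pruned tree $(\tilde u_A)_{A\in[\bn]^{\le n}}$ together with $(\tilde x^*_A)_{A\in[\bn]^{\le n}}$ satisfies \eqref{E:3.1.1_2}; it satisfies \eqref{E:3.1.1_3} with $C=D(1+\vp)$ because its branches are sub-branches of branches of the original tree; and it satisfies \eqref{E:3.1.1_4} with constant $1+\vp$, since that inequality is exactly the assertion that the Schauder basis constant of $(\tilde u_{A_i})_i$ does not exceed $1+\vp$. Hence $X$ contains $(D(1+\vp),1+\vp)$-good $\ell_\infty$-trees of arbitrary height. For the ``in particular'' clause: if $\ell_\infty^n$ is in the $n$-th asymptotic structure of $X$ for every $n$, the hypothesis of the first part holds with $D$ replaced by $1+\vp''$ for an arbitrary $\vp''>0$; applying the first part with this $D$ and with $\vp'',\vp$ small produces $(1+\vp,1+\vp)$-good $\ell_\infty$-trees, i.e.\ good $\ell_\infty$-trees of arbitrary height almost isometrically.

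The main obstacle is the exactness demanded by \eqref{E:3.1.1_2}: the functionals must have norm \emph{exactly} $1$ and annihilate the later vertices \emph{exactly}. The coordinate functionals of the basic sequence $(\tilde u_{A_i})_i$ would give exact biorthogonality but only control their norms by a constant close to $1$; conversely, the norming functionals $g_A$ have norm exactly $1$ but a priori only annihilate the later vertices approximately. The reconciliation rests on the two features isolated above --- that \eqref{E:3.1.1_2} requires annihilation of a \emph{tail} of the compatible enumeration only, and that Lemma \ref{L:4.3} lets one place every newly chosen vertex inside a prescribed cofinite-dimensional subspace (the asymptotic-structure property passing to cofinite-dimensional subspaces, so the construction may be replayed after adjoining finitely many extra test functionals). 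Everything else --- the Mazur diagonalization controlling the basis constant and the bookkeeping of the $\vp$'s --- is routine.
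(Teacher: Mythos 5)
Your overall architecture is the paper's: order the vertices by the compatible enumeration, accumulate norming functionals (one Hahn--Banach functional per vertex for \eqref{E:3.1.1_2}, plus functionals norming a net of the sphere of the span of the earlier vectors for \eqref{E:3.1.1_4}), and force each new vertex to annihilate all of them. The gap is in how you obtain the \emph{exact} annihilation. You verify \eqref{E:3.1.1_3} by saying the final family is a pruning of the single tree produced by Lemma \ref{L:4.3}, so that its branches are sub-branches of branches of the original tree; but a literal pruning of that fixed tree only gives vertices that are \emph{approximately} in the kernels of the accumulated functionals (weak nullness of a node makes $|f^*(y_{A\cup\{m\}})|$ small for large $m$, not zero), so \eqref{E:3.1.1_2} would fail exactly. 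Your proposed fix --- regenerate each node inside the updated finite-codimensional subspace by ``replaying the construction'' --- abandons the single fixed tree, and then the branches of the assembled family are no longer branches of any one tree supplied by Lemma \ref{L:4.3}; your verification of \eqref{E:3.1.1_3} then has no support. (It could be rescued by arguing directly with the vector player's winning strategy in the asymptotic-structure game, so that every branch of the assembled tree is a complete play against the adversary who plays the kernels; but you do not carry this out, and Lemma \ref{L:4.3} as stated only places the whole tree in one cofinite-dimensional subspace fixed in advance, not in adaptively updated ones.)

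The paper resolves exactly this tension by perturbation rather than regeneration: it keeps the single tree $(y_A)$, uses weak nullness of the relevant node to find a member $y_{\At_{j+1}}$ with $\dist\big(S_{Y_j},y_{\At_{j+1}}\big)<\delta$, where $Y_j$ is the intersection of the kernels of all functionals collected so far, and replaces it by a normalized $\yt_{j+1}\in Y_j$ with $\|\yt_{j+1}-y_{\At_{j+1}}\|<\delta$ (condition \eqref{E:4.2.2_6}); exact annihilation \eqref{E:4.2.2_5} then holds by construction, and the damage to the branch equivalence is controlled by the a priori stability condition \eqref{E:4.2.2_1}, which costs an extra factor $\sqrt{1+\vp}$ absorbed into $D(1+\vp)$. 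You need to add this perturbation step and the accompanying choice of $\delta$; without it, \eqref{E:3.1.1_2} and \eqref{E:3.1.1_3} cannot both be certified by the argument as written. The rest of your proof (the net argument for the basis constant, the use of compatibility to see that \eqref{E:3.1.1_2} only requires annihilating a tail of the enumeration, and the deduction of the ``in particular'' clause) matches the paper.
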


\begin{proof} 
 
Let $n\in\bn$, $\vp>0$, and let  $(A_j)_{j\in\bn}$ be a compatible linear ordering of $[\bn]^{\le n}$. Since $\ell_\infty^n$ is in the $n$-th asymptotic structure of $X$ up to a constant $D\ge 1$ and $X^*$ is separable, by Lemma \ref{L:4.3} there is a weakly null tree $(y_A)_{A\in[\bn]^{\le n}}$ in $X$ such that $\|y_A\|=1$ all of  whose branches are $D(1+\vp)$-equivalent to the canonical basis of $\ell_\infty^{n+1}$. Moreover, $(y_A)_{A\in[\bn]^{\le n}}$ can be chosen inside every cofinite dimensional subspace. Note that \eqref{E:3.1.1_3} is satisfied for the tree $(y_A)_{A\in[\bn]^{\le n}}$. We choose $0<\delta<\vp$ small enough so that the following condition holds:
  \begin{align}\label{E:4.2.2_1} &\text{If $(v_j)_{j=1}^{n+1}$ is a normalized basic sequence, whose basis constant}\\
 &\text{is not larger than $(1+\vp)$, then any sequence $(u_i)_{i=1}^{n+1}$, for which}\notag\\
   &\text{$\|u_j-v_j\|\le \delta$, $j=1,2\ldots, n$, is $\sqrt{1+\vp}$-equivalent to $(v_j)_{j=1}^{n+1}$. }\notag
  \end{align}
 
 $X^*$ being separable, pick $(f^*_j)_{j\in \bn}$ a dense sequence in $S_{X^*}$.
 Inductively we will choose  for every $j\in\bn$, an element $\At_j\in [\bn]^{\le n}$, a finite set $F_j\subset S_{X^*}$, and an element $\yt_j\in B_X$ so that
 \begin{align}
\label{E:4.2.2_2}  &\At_1=\emptyset, \text{ and if }A_j=A_i\cup\{\max(A_j)\}, \text{ for some $i<j$, then}\\ 
           &\text{$\At_j = \At_i\cup\{ m\}$, with $m>  \max\{\At_s:s<j\}$},\notag\\
\label{E:4.2.2_3}  &\{f^*_1,f^*_2,\ldots, f^*_j\}\subset F_j\text{ and there is an $\yt_j^*\in F_j$, with $\yt^*_j(\yt_{j})=\|\yt_{j}\|$,}\\
\label{E:4.2.2_4}  &\max_{f^*\in F_j} |f^*(x)|\ge \frac1{1+\delta}\|x\|\text{ for all $x\in \spa(\yt_i)_{i=1}^j$,}\\
\label{E:4.2.2_5} &f^*(\yt_j)=0\text{ for all $f^*\in F_i$, with $i<j$,}\\
\label{E:4.2.2_6} &\|\yt_j-y_{\At_j}\|<\delta,\text{ and } \|\yt_j\|=1.
\end{align}

For  $j=1$, we put $\yt_1= y_\emptyset$, choose $\yt_1^*\in S_{X^*}$ so that $\yt^*_1(\yt_1)=1$, and put $F_1=\{f_1^*, \yt^*_1\}$, \eqref{E:4.2.2_2}-\eqref{E:4.2.2_6} is then clearly satisfied. Now assume  that for some $j\in\bn$  we have chosen $\At_i$, $F_i$ and $\yt_i$, for $i=1,2,\ldots, j$. First we note that \eqref{E:4.2.2_4} and \eqref{E:4.2.2_5} implies that the normalized sequence $(\yt_i)_{i=1}^j$ is a basic sequence which constant not exceeding $1+\delta$. Since the linear ordering is compatible we can write $A_j$ as $A_j=A_{i_0}\cup\{ t\}$ with $i_0<j$ and  $t>\max(A_{i_0})$ and let $m_0=\max_{s<j} \At_s$, and put  
\begin{equation*}
Y_j=\Big\{ x\in X: f^*(x)=0 \text{ for all $f^*\in \bigcup_{i=1}^{j} F_i$}\Big\}\in \cof(X).
\end{equation*} 
Since the tree $(y_A)_{A\in [\bn]^{\le n}}$ is normalized and weakly null, we can find $m>m_0$ large enough so that $\dist\Big(S_{Y_j},y_{\At_{i_0}\cup\{m\}} \Big)<\delta$, and thus there is an $\yt_{j+1}\in Y$ with $\|y_{\At_{i_0}\cup\{m\}}-\yt_{j+1}\|<\delta$ and $\|\yt_{j+1}\|=\|y_{\At_{i_0}\cup\{m\}}\|=1$. Letting $\At_{j+1}= \At_{i_0}\cup\{m\}$, we deduce \eqref{E:4.2.2_2} and \eqref{E:4.2.2_6}, while \eqref{E:4.2.2_5} follow from the definition of $Y_j$ and the requirement that $\yt_{j+1}\in Y_j$. Finally we choose a finite set  $F_{j+1}\subset S_{X^*}$ so that \eqref{E:4.2.2_3} and \eqref{E:4.2.2_4} hold. This finishes the inductive step of choosing $\At_j\in [\bn]^{\le n}$, a finite set $F_j\subset S_{X^*}$,  and an element $\yt_j$, for every $j\in\bn$.

\medskip

We first  deduce from \eqref{E:4.2.2_2}, that the map $\pi: [\bn]^{\le n}\to [\bn]^{\le n}, A_j\mapsto\At_j$ is an order isomorphism, that satisfies the condition for prunings, and thus, by \eqref{E:4.2.2_6}, $(\yt_j)_{j\in \bn}$ is a $\delta$-perturbation of $(y_{\At_j})_{j\in\bn}$, which is  a pruning of $(y_{A})_{A\in[\bn]^{\le n}}$. We define for $A\in[\bn]^{\le n}$, $z_A=\yt_{j}$, and $z^*_A=\yt^{*}_j$, where $j\in\bn$ is such that $\pi(A)=\At_j$. First we deduce from \eqref{E:4.2.2_4} and \eqref{E:4.2.2_5} that $(z_{A_i})_{i=1}^\infty$ is basic and that its basis constant (which is the same as the basis constant of $(\yt_i)_{i\in\bn}$)
is at most $1+\delta\le 1+\vp$, which verifies \eqref{E:3.1.1_4}. Condition \eqref{E:3.1.1_2} follows from the second condition in \eqref{E:4.2.2_3} and \eqref{E:4.2.2_5}. This conclude the proof of our lemma since, as we already mentioned, condition \eqref{E:3.1.1_3} follows from our assumption about the tree $(y_A)_{A\in[\bn]^{\le n}}$.
\end{proof}

We shall now introduce a crucial tool, the so-called {\it Szlenk index}. The definition of the Szlenk derivation and the Szlenk index have been first introduced in \cite{Szlenk1968} and we refer to \cite{Lancien2006} or \cite{OdellSchlumprecht2006} for a thorough account on this central notion in asymptotic Banach space theory. Consider a real separable Banach space $X$ and $K$ a weak$^*$-compact subset of
$X^*$. For $\vp>0$ we let $\cV$ be the set of all relatively weak$^*$-open
subsets $V$ of $K$ such that the norm diameter of $V$ is less than $\vp$ and
$s_{\vp}(K)=K\setminus \cup\{V:V\in\cV\}.$ We define inductively
$s_{\vp}^{\alpha}(K)$ for any ordinal $\alpha$, by
$s^{\alpha+1}_{\vp}(K)=s_{\vp}(s_{\vp}^{\alpha}(K))$ and
$s^{\alpha}_{\vp}(K)={\displaystyle \cap_{\beta<\alpha}}s_{\vp}^{\beta}(K)$ if
$\alpha$ is a limit ordinal.  Then we define $\Sz(X,\vp)$ to be the least
ordinal $\alpha$ so that $s_{\vp}^{\alpha}(B_{X^*})=\emptyset,$ if such an
ordinal exists. Otherwise we write $\Sz(X,\vp)=\infty.$ The {\it Szlenk index}
of $X$ is finally defined by $\Sz(X)=\sup_{\vp>0}\Sz(X,\vp).$ We denote $\omega$ the first infinite ordinal and $\omega_1$ the first uncountable ordinal. The Szlenk index, in particular its utilization in the next lemma, is the pivotal notion in this article since it allows us to establish a bridge between the embeddabilty results from Section \ref{S:3} and the non-embeddability results from Section \ref{S:5}. Indeed, it follows from a theorem of Knaust, Odell and Schlumprecht \cite{KnaustOdellSchlumprecht1999} that a separable Banach space
admits an equivalent asymptotically uniformly smooth norm if and only if $\Sz(X)\le \omega$. Then it is easy to see that for a reflexive Banach space the condition $\Sz(X^*)\le \omega$ is
equivalent to the existence of an equivalent asymptotically uniformly convex norm on $X$. With this information at hand, we can almost forget the formulations in terms of renormings and work essentially with the notion of the Szlenk index of a Banach space.

\medskip

To prove Theorem \ref{T:4.1} below it would be sufficient to show that a separable reflexive Banach space $X$, that has a $C$-unconditional asymptotic structure for some $C\ge 1$ and such that $\Sz(X^*)>\omega$, contains $\ell_\infty^n$ in its $n$-th asymptotic structure up to some constant. Modifying slightly the proof of Lemma \ref{L:4.4} one can actually prove a little bit more.

\begin{lemma}\label{L:4.5} Let $X$ be a separable reflexive Banach space. Assume that $X$ has a $C$-unconditional asymptotic structure for some $C\ge 1$ and that $\Sz(X^*)>\omega$. Then, there is $\eta>0$ so that for all $\vp>0$, $X$ contains $(\sqrt{1+\vp}\frac{4C}\eta,1+\vp)$-good $\ell_\infty$-trees of arbitrary height. In particular, there is $\eta>0$ so that for all $\vp>0$ and every $n\in \bn$, $\ell_\infty^n$ is in the $n$-th asymptotic structure of $X$ up to constant $\sqrt{1+\vp}\frac{4C}\eta$.
\end{lemma}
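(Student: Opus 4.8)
The plan is to run the inductive scheme of the proof of Lemma~\ref{L:4.4}, but with the underlying weakly null tree \emph{built on the fly}: its vertices will be produced by the Szlenk derivation (this is where $\Sz(X^*)>\omega$ enters) and the $\ell_\infty$-shape of its branches will be forced by the $C$-unconditional asymptotic structure. First I would record the two inputs. Since $X$ is reflexive, the Szlenk derivation of $X^*$ lives on $B_X=B_{X^{**}}$ with the weak topology, and $\Sz(X^*)>\omega$ gives $\eta\in(0,1]$, fixed from now on, with $s_\eta^n(B_X)\ne\emptyset$ for all $n$; the defining property of the derivation is that for $w\in s_\eta^{m+1}(B_X)$ every weak neighbourhood $V$ of $w$ satisfies $\diam\bigl(V\cap s_\eta^m(B_X)\bigr)\ge\eta$, so $V\cap s_\eta^m(B_X)$ contains a point at distance $\ge\eta/2$ from $w$. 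Iterating this from a root $w_\emptyset\in s_\eta^n(B_X)$ (which, going one level deeper, may be taken of norm $\ge\eta/4$) produces a tree $(w_A)_{A\in[\bn]^{\le n}}\subset B_X$ whose nodes converge weakly to their parents and whose branch-differences $w_A-w_{A^-}$ ($A^-$ being $A$ with its largest element removed) have norm in $[\eta/2,2]$; in particular $\|w_A-w_\emptyset\|\le 2$. The feature of weak convergence I will exploit is that any fixed finite set of functionals is almost annihilated by $w_{A\cup\{j\}}-w_A$ once $j$ is large, so after normalizing, that difference lies arbitrarily close to any prescribed finite-codimensional subspace while still having norm $\ge\eta/2$. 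The second input is that, for the fixed height $n$, the $C$-unconditional asymptotic structure provides a winning strategy $\sigma$ for the cofinite player in the length-$(n+1)$ $C$-unconditionality game (equivalently, the $C$-unconditional finite tree property; cf. Section~\ref{S:4.1}).

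For the construction, fix $\vp>0$, a compatible linear order $(A_j)$ of $[\bn]^{\le n}$, a dense sequence $(f_j^*)\subset S_{X^*}$, and a small $\delta=\delta(\vp,C,n)>0$. Following the proof of Lemma~\ref{L:4.4}, I would choose inductively $\At_j\in[\bn]^{\le n}$ (defining a pruning $\pi$), vectors $\yt_j\in S_X$, and finite sets $F_j\subset S_{X^*}$ satisfying the analogues of \eqref{E:4.2.2_2}--\eqref{E:4.2.2_6}, the only new feature being how $\yt_{j+1}$ is produced. When the vertex $A_{j+1}=A_{i_0}\cup\{t\}$ is attached below $\yt_{i_0}$, I would form the finite-codimensional subspace $\bar Z_{j+1}=\sigma(\text{the }\yt\text{'s already chosen along the branch up to }\At_{i_0})\cap\bigcap_{f^*\in\bigcup_{i\le j}F_i}\ker f^*$, then use the weak convergence of the node at $w_{\At_{i_0}}$ to pick a Szlenk child $w'$ of $w_{\At_{i_0}}$ so deep that the normalization of $w'-w_{\At_{i_0}}$ is within $\delta$ of a norm-one vector $\yt_{j+1}$ of $\bar Z_{j+1}$; I record $w_{\At_{j+1}}:=w'$ and pick $F_{j+1}$ exactly as in Lemma~\ref{L:4.4}. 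Because all vertices below a common node are routed through the \emph{same} $\sigma$-subspace, along every branch the chosen $\yt$'s form a legal play for $\sigma$, hence are $C$-unconditional; moreover each $\yt_{B|_i}$ lies within $\delta$ of the genuine normalized Szlenk difference $(w_{B|_i}-w_{B|_{i-1}})/\|w_{B|_i}-w_{B|_{i-1}}\|$.

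For the verification, set $x_A:=\yt_l$ where $\pi(A)=\At_l$. Conditions \eqref{E:3.1.1_2} and \eqref{E:3.1.1_4} follow from the functional bookkeeping verbatim as in Lemma~\ref{L:4.4}, the latter with basis constant $\le 1+\delta\le 1+\vp$, so the second parameter is $1+\vp$. For \eqref{E:3.1.1_3}: along a branch the vectors $x_{B|_0},\dots,x_{B|_n}$ are normalized and $C$-unconditional, which gives the lower bound $\|\sum_i a_ix_{B|_i}\|\ge\frac1C\|(a_i)\|_\infty\ge\frac{\eta}{4C}\|(a_i)\|_\infty$; for the upper bound, writing $r_0=\|w_\emptyset\|\in[\eta/4,1]$ and $r_i=\|w_{B|_i}-w_{B|_{i-1}}\|\in[\eta/2,2]$ for $i\ge1$, the combination $\sum_i r_ix_{B|_i}$ is within $2(n+1)\delta$ of $w_B$, hence of norm $\le 1+2(n+1)\delta$, and since every ratio $|a_i|/r_i$ is $\le4/\eta$, $C$-unconditionality (which is unaffected by positive rescalings of the basis vectors) yields $\|\sum_i a_ix_{B|_i}\|\le\frac{4C}{\eta}\bigl(1+2(n+1)\delta\bigr)\|(a_i)\|_\infty\le\sqrt{1+\vp}\,\frac{4C}{\eta}\|(a_i)\|_\infty$ once $\delta$ is small. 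Thus $X$ contains $\bigl(\sqrt{1+\vp}\,\frac{4C}{\eta},\,1+\vp\bigr)$-good $\ell_\infty$-trees of arbitrary height, and the final ``in particular'' assertion is then immediate from Lemma~\ref{L:4.3}, since \eqref{E:3.1.1_3} says precisely that all branches of the vector-tree are $\sqrt{1+\vp}\,\frac{4C}{\eta}$-equivalent to the $\ell_\infty^{n+1}$-unit vector basis.

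The step I expect to be the crux is the inductive choice of $\yt_{j+1}$: it has to be simultaneously (i) essentially the normalization of a difference of two successively deeper $\eta$-Szlenk points with separation $\ge\eta/2$ — a rigid requirement, living in a possibly very thin derived set — and (ii) (up to $\delta$) exactly inside the finite-codimensional subspace $\sigma$ prescribes, intersected with the kernels of all previously chosen functionals. Reconciling (i) with (ii) is exactly what the weak convergence of the Szlenk nodes is for: one pushes the child $w'$ deep enough to meet (ii) up to $\delta$ without disturbing (i). One auxiliary point is that the whole construction should take place inside whatever finite-codimensional subspace the strategy demands at the root; this causes no trouble, because reflexivity, separability of the dual, a $C$-unconditional asymptotic structure, and $\Sz(\cdot^*)>\omega$ (the last via the renorming characterization, a finite-dimensional perturbation being asymptotically invisible) are all inherited by finite-codimensional subspaces.
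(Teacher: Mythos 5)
Your argument is correct and, apart from one component, is the paper's proof: the same pruning scheme as in Lemma \ref{L:4.4}, the same interleaving of the cofinite player's winning strategy so that every branch is a legal play and hence exactly $C$-unconditional, and the same final computation (factor $4/\eta$ from the lower norm bound on the unnormalized branch vectors, convexity to reduce $[-1,1]$-coefficients to signs, $C$-unconditionality, a $\sqrt{1+\vp}$ perturbation loss, and the telescoping identity bounding the sum of the branch differences by the norm of a ball element). The genuine difference is where the seminormalized weakly null tree of differences comes from. The paper imports it wholesale from the Motakis--Schlumprecht characterization of $\Sz(X^*)>\omega$ (\cite[Theorem 5.1]{MotakisSchlumprecht}), which supplies a tree $(x_A)$ together with functionals $(x_A^*)$ whose separation properties give $\|x_A-x_{A^-}\|\ge\eta/2$; you instead build the tree $(w_A)$ by hand from the Szlenk derivation of $B_{X^{**}}=B_X$, reading the lower bound $\|w_A-w_{A^-}\|\gtrsim\eta$ directly off the diameter condition defining $s_\eta$ and the weak nullity off metrizability of $(B_X,w)$ (valid since $X^*$ is separable). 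Since the paper's proof never uses the Motakis--Schlumprecht functionals beyond the norm lower bound on the differences, your substitute supplies exactly what is needed, and your route is more self-contained; what it costs is only cosmetic (the separation you extract is really $>\eta/2-\epsilon$ rather than $\ge\eta/2$, which just renames $\eta$, harmless because $\eta$ is existentially quantified in the statement). Two small points you gloss over but which are fine: the passage from $|a_i|/r_i\le 4/\eta$ to the upper bound needs the convexity step (extreme points of $[-1,1]^I$ are sign vectors), and the weak nullity of the nodes of the final tree --- needed for the ``in particular'' via Lemma \ref{L:4.3} --- comes from the kernel conditions on the dense sequence $(f_j^*)$ rather than from the $\delta$-perturbation, exactly as in Lemma \ref{L:4.4}.
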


\begin{proof} 
Without loss of generality (simply pass to an appropriate cofinite dimensional subspace  $X_1$ if needed) we  can assume that 
 \begin{align}\label{E:4.2.2_7}
\forall x_1\kin S_{X},\, \exists X_2\kin \cof(X),& \,%\exists X_2\kin \cof(X),\,\exists  x_2\kin S_{X_2}
 ,\ldots, 
 \exists X_{n+1}\kin \cof(X), \,\forall  x_{n+1}\kin S_{X_{n+1}}  \text{ so that }\\
(x_j)_{j=1}^{n+1} \text{ is $C$-unconditional.}&\notag
 \end{align}
Since $X$ is separable and reflexive with $\Sz(X^*)>\omega$, we can apply \cite[Theorem 5.1, (i)$\Rightarrow$ (iii)]{MotakisSchlumprecht} to the space $X^*$, and conclude, that there is an $\eta>0$,  so that for each $k\in\bn$ there  is  a tree  $(x^{(k)}_A)_{A\in [\bn]^{\le k}}\subset B_X$ and a weakly null tree  $(x^{(k)*}_A)_{A\in [\bn]^{\le k}}\subset B_{X^*}$ with the following properties:
\begin{itemize} 
\item $x^{(k)*}_A(x_B^{(k)})>\eta$, for all $A,B\in[\bn]^{\le k}\setminus\{\emptyset\}$, with $A\preceq B $,
\item $\big|x^{(k)*}_{A}(x_B^{(k)})\big| < \eta/2$, for all $A,B\in [\bn]^{\le k}\setminus\{\emptyset\}$,  with $A\not\preceq B$,
\item w-$\lim_{n\to\infty } x_{A\cup\{n\}}^{(k)}= x_A^{(k)}$, for all $A\in [\bn]^{< k}$.
 \end{itemize}  
 We first choose a tree $(y_A)_{A\in[\bn]^{\le n}}$ in $X$ as follows. We put 
 \begin{equation}\label{E:4.2.2_8} 
 y_\emptyset =\frac12 x^{(n)}_\emptyset,\text{ and }y_A=\frac12\big(x^{(n)}_A- x^{(n)}_{A\setminus\{\max(A)\}}\big) \text{ for $A\in [\bn]^{\le n}\setminus\{\emptyset\}$}.
 \end{equation}
It follows that $(y_A)_{A\in [\bn]^{\le k}}$ is weakly null and that $\frac\eta4 \le \|y_A\|\le 1$ for all $A\in [\bn]^{\le n}$. 

\medskip

Let $n\in\bn$, $\vp>0$, and let $(A_j)_{j\in\bn}$ be a compatible linear ordering of $[\bn]^{\le n}$. Because we are now dealing with semi-normalized trees, we modify slightly condition \eqref{E:4.2.2_1} as follows. We choose $0<\delta<\vp$ small enough so that the following condition holds: 

\smallskip

\noindent $(47')$ If $(v_j)_{j=1}^{n+1}$ is a basic sequence with $\|v_j\|\in[\eta/4,1]$, for $j=1,2,\ldots,n+1$, whose basis constant is not larger than $(1+\vp)$, then any sequence $(u_i)_{i=1}^{n+1}$, for which $\|u_j-v_j\|\le \delta$ for every $j=1,2\ldots, n+1$, is $\sqrt{1+\vp}$-equivalent to $(v_j)_{j=1}^{n+1}$.

\smallskip

With a similar argument we can choose inductively, for every $j\in\bn$, an element $\At_j\in [\bn]^{\le n}$, a set $F_j$, an element $\yt_j\in B_X$ satisfying $\eqref{E:4.2.2_2}$-\eqref{E:4.2.2_5},  and 
\begin{align*}
\hskip -3cm(52') \hskip 2cm \|\yt_j-y_{\At_j}\|<\delta\|y_{\At_j}\|, \text{ and  }\|\yt_j\|\in[\eta/4,1],
\end{align*}
and some crucial additional condition. Before stating this last condition we note that from \eqref{E:4.2.2_2} it follows that $I_j=\{s: A_s\preceq A_j\}\subset\{1,2,\ldots, j\}$ and we put $l:=|I_j|=|A_j|$. The last condition is:
\begin{align}\label{E:4.2.2_9}
\exists X_{l+1}\kin \cof(X),\, \forall y_{l+1}\kin S_{X_{l+1}},\, 
%\exists X_{l+2}\kin \cof(X),\,\forall y_{l+2}\kin S_{X_{l+2}},\, 
\ldots, \exists X_{k}\kin \cof(X),\, &\forall y_{k}\kin S_{X_{k}} \text{ so that  }\\
\{ \yt_s: s\in I_j\}\cup\{y_{l+i}: i=1,2\ldots, k-l\} &\text{ is $C$-unconditional}.\notag
 \end{align}
Note that if $l=k$, then \eqref{E:4.2.2_9} means that  $\{ \yt_s: s\in I_j\}$ is $C$-unconditional. 

\smallskip

The base case is straightforward. Now assume that for some $j\in\bn$  we have chosen $\At_i$, $F_i$ and $\yt_i$, for $i=1,2,\ldots, j$. Using \eqref{E:4.2.2_9} we can find a space $\Xt\in\cof(X)$ so that  (with $I_j$ and $l$ as defined before \eqref{E:4.2.2_9})
$\forall y_{l+1}\kin S_{\Xt},\, \exists X_{l+2}\kin \cof(X),\,\forall y_{l+2}\kin S_{X_{l+2}},\, \ldots, \exists X_{k}\kin \cof(X),\, \forall y_{k}\kin S_{X_{k}}$
 and $\{ \yt_s: s\in I_j\}\cup\{y_{l+i}\colon i=1,2\ldots k-l\}$ is $C$-unconditional. If $l=n$, we simply put $\Xt=X$. We define, $\At_{j+1}$ similarly using 
\begin{equation*}
Z_j=\Xt\cap \Big\{ x\in X: f^*(x)=0 \text{ for all $f^*\in \bigcup_{i=1}^{j} F_i$}\Big\}\in \cof(X),
\end{equation*}
instead of $Y_j$, and thus \eqref{E:4.2.2_9} follow from the definition of $Z_j$ and the requirement that $y_{j+1}\in Z_j$. This finishes the inductive step.

\medskip

We define for $A\in[\bn]^{\le n}$, $z_A=\frac{\yt_{j}}{\|\yt_{j}\|}$, and $z^*_A=\yt^{*}_j$, where $j\in\bn$ is such that $\pi(A)=\At_j$. It remains to prove that \eqref{E:3.1.1_3} holds. We first note that for any $B$ in $[\bn]^{n+1}$, say $B=A_j$, it follows that $(z_E)_{E\preceq B}=\Big(\frac{\yt_i}{\|\yt_i\|}\Big)_{i\le j,\,A_i\preceq A_j}$ is $(1+\vp)$-unconditional by \eqref{E:4.2.2_9}. Letting $I=\{i\le j: A_i\preceq A_j\}$, we deduce 
 \begin{align*}
\max\Big\{ &\Big\|\sum_{E\preceq B} \xi_E x_E\Big\|: (\xi_E)_{E\preceq B}\subset [-1,1]\Big\}\\
&\le\frac4\eta  \max\Big\{ \Big\|\sum_{i\in I} \xi_i \yt_i\Big\|: (\xi_i)_{i\in I}\in [-1,1]^I\Big\} \text{ (since $\|\yt_i\|\ge \eta/4$, $i\in I$)}\\
&= \frac4\eta  \max\Big\{ \Big\|\sum_{i\in I} \xi_i \yt_i\Big\|: (\xi_i)_{i\in I}\in \{-1,1\}^I\Big\} \text{ (by convexity we only}\\
&\text{need to consider the extreme points of $[-1,1]^I$ which are $\{-1,1\}^{I}$)}\\
&=\frac{4C}\eta \Big\|\sum_{i\in I} \yt_i\Big\|  \text{ (by \eqref{E:4.2.2_9})}\\
&\le \sqrt{1+\vp}\frac{4C}\eta\Big\|\sum_{E\preceq \pi(B)}  y_E\Big\| \text{ (by \eqref{E:4.2.2_1} and \eqref{E:4.2.2_6})}\\
&= \sqrt{1+\vp}\frac{4C}\eta \frac12 \|x^{(n+1)}_{\pi(B)}\| \text{ (by \eqref{E:4.2.2_8})}\\
&\le \sqrt{1+\vp}\frac{4C}\eta.
 \end{align*}
Since $(z_E)_{E\preceq B}$  is $(1+\vp)$-unconditional it also follows that
$$ \Big\|\sum_{E\preceq B} \xi_E z_E\Big\|\ge \frac1{1+\vp} \max_{E\preceq B} |\xi_E|, \text{ for all   $(\xi_E)_{E\preceq B}\subset [-1,1]$,}$$
which finishes the verification of \eqref{E:3.1.1_3} and thus the proof of the lemma.
\end{proof}

\begin{theorem}\label{T:4.1}
Let $X$ be a separable reflexive Banach space. Assume that $X$ has an unconditional asymptotic structure, and that $\Sz(X^*)>\omega$, then $X$ contains good $\ell_\infty$-trees of arbitrary height almost isometrically. 
\end{theorem}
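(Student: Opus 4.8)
The plan is to obtain the conclusion by chaining together the three lemmas already proved in this section. Note first that, since $X$ is separable and reflexive, $X^*$ is separable; hence every hypothesis in Lemmas~\ref{L:4.1},~\ref{L:4.4},~\ref{L:4.5} requiring separability of the dual is automatically satisfied, and $X$ having an unconditional asymptotic structure means it has a $C$-unconditional asymptotic structure for some $C\ge 1$.

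First I would invoke Lemma~\ref{L:4.5}: from the $C$-unconditional asymptotic structure of $X$ together with $\Sz(X^*)>\omega$, we get an $\eta>0$ such that for every $\vp>0$ and every $n\in\bn$, $\ell_\infty^n$ lies in the $n$-th asymptotic structure of $X$ up to the constant $\sqrt{1+\vp}\,\tfrac{4C}{\eta}$. Fixing, say, $\vp=1$ yields a single constant $D:=\sqrt2\,\tfrac{4C}{\eta}\ge 1$, independent of $n$, with the property that $\ell_\infty^n$ is in the $n$-th asymptotic structure of $X$ up to $D$ for all $n$. Next, I would apply Lemma~\ref{L:4.1} (the asymptotic form of James's non-distortion argument for $c_0$) for each $n$ separately: it promotes ``$\ell_\infty^n$ in the $n$-th asymptotic structure up to the fixed constant $D$'' to ``$\ell_\infty^n \in\{X\}_n$'', i.e.\ with constant arbitrarily close to $1$. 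Thus $\ell_\infty^n\in\{X\}_n$ for every $n\in\bn$. Finally, the last assertion of Lemma~\ref{L:4.4} applies verbatim and gives that $X$ contains good $\ell_\infty$-trees of arbitrary height almost isometrically, which is precisely the conclusion.

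The only point demanding a little care is the transition from Lemma~\ref{L:4.5}, whose constant depends on $\vp$, to a constant $D$ uniform in $n$ before Lemma~\ref{L:4.1} is brought in; fixing any one value of $\vp$ in Lemma~\ref{L:4.5} resolves this cleanly. Beyond that there is no real obstacle at the level of this theorem: the genuine difficulties --- building the semi-normalized trees from the Szlenk-index data of $X^*$ and running the unconditionality game, and the $c_0$ non-distortion step --- have already been absorbed into Lemmas~\ref{L:4.5} and~\ref{L:4.1}, so the present theorem is just their synthesis.
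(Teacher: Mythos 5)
Your proposal is correct and follows exactly the paper's own argument: apply Lemma~\ref{L:4.5} to get $\ell_\infty^n$ in the $n$-th asymptotic structure up to a constant, upgrade via Lemma~\ref{L:4.1} (James's non-distortion argument) to constant arbitrarily close to $1$, and conclude with Lemma~\ref{L:4.4}. Your extra remark about fixing one value of $\vp$ to obtain a constant uniform in $n$ is a sensible clarification of a point the paper leaves implicit.
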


\begin{proof}  We will apply Lemma \ref{L:4.5} and Lemma \ref{L:4.4} successively as follows. First note that, by Lemma \ref{L:4.5}, for all $n\in\bn$ $\ell_\infty^n$ is in the $n$-th asymptotic structure of $X$ up to some constant $D$. By Lemma \ref{L:4.1}, for all $n\in\bn$, it follows  that $\ell^n_\infty$ is in the $n$-th asymptotic structure  of $X$. An appeal to Lemma \ref{L:4.4} finishes the proof.
 \end{proof}

\subsection{Embeddability into $L_1$}\label{S:3.2}

The fact that the countably branching diamond of depth $1$ embeds isometrically into $L_1$ is well known and can be found implicitly in Enflo's (unpublished) argument showing that $\ell_1$ and $L_1[0,1]$ are not uniformly homeomorphic (cf. \cite{Benyamini1985}, \cite{BenyaminiLindenstrauss2000}, or \cite{Weston1993}). The embedding is based on the particular behavior of Rademacher functions in $L_1[0,1]$. Whether or not a \emph{similar} embedding, for countably branching diamond graphs of arbitrary depth, can be implemented without blowing up the distortion is not completely obvious at first sight. However, using the results from \cite{GNRS2004} and an ultraproduct argument it can be shown that $D_k^\omega$ embeds into $L_1$ with distortion at most $2$. In this section, an embedding using Bernoulli random variables (and no ultraproduct argument) that achieves the same distortion is given. 
Note that it follows from \cite{LeeRaghavendra2010} that this distortion is optimal. It is worth noticing that in both our embedding proofs we start out with a tree $(x_A)_{A\in[\bn]^{\le k}}$, and in both cases the definition of of the image of $(A,r)$ only depends on  $\{x_{A|_i}: i=1,2, \ldots, |A|\}$. We start with the following technical lemma.

\begin{lemma}\label{L:3.1} Let $(\Omega,\Sigma,\bp)$ be an atomless probablity space. For every $k\in\bn$, and every $x\in V_k^\omega$, there exist measurable sets $S_k(x)\subset \Omega$ such that
 
\begin{equation}\label{E:3.2_1}
S_k(x)\subset S_k(y)  \text{ whenever $x,y\kin V_k^\omega$ lie on the same vertical path,}
\end{equation}
and
\begin{equation}\label{E:3.2_2}
S_k(x)\text{ and }S_k(y)\text{ are independent if } \min(x)\neq\min(y).
\end{equation}

\end{lemma}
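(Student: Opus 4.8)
The plan is to prove, by induction on $k$, the following slight strengthening of the statement: for \emph{every} atomless probability space $(\Omega,\Sigma,\bp)$ there are measurable sets $S_k(x)$, $x\in V_k^\omega$, satisfying \eqref{E:3.2_1} and \eqref{E:3.2_2} together with the normalizations
\[
S_k(b_k^\omega)=\emptyset,\qquad S_k(t_k^\omega)=\Omega,\qquad \bp\big(S_k(A,r)\big)=r\ \text{ for all }(A,r)\in V_k^\omega .
\]
Here I read ``the same vertical path'' in \eqref{E:3.2_1} in the natural directed sense: if $(A,r)$ and $(B,s)$ lie on a common vertical path with $r\le s$, then $S_k(A,r)\subset S_k(B,s)$ (so the sets along a vertical path form a chain); and $\min(A,r):=\min A$, with $\min\emptyset$ declared to exceed every integer, so that the poles $b_k^\omega,t_k^\omega$ are vacuously covered by \eqref{E:3.2_2} --- harmless, since $\emptyset$ and $\Omega$ are independent of every measurable set. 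These extra normalizations are exactly what will make the sets usable for the ``vertically isometric'' $L_1$-embedding of the next subsection via $(A,r)\mapsto 2^k\mathbf 1_{S_k(A,r)}$. The base case $k=1$ is immediate: take mutually independent $T_j\subset\Omega$ with $\bp(T_j)=\frac12$ and set $S_1(b_1^\omega)=\emptyset$, $S_1(t_1^\omega)=\Omega$, $S_1(\{j\},\sh)=T_j$; the only vertical paths are $b_1^\omega\to(\{j\},\sh)\to t_1^\omega$, and $\min(\{i\},\sh)=i$.

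For the inductive step I would use the self-similar decomposition $V_{k+1}^\omega=\{b_{k+1}^\omega,t_{k+1}^\omega\}\cup\bigcup_{j}\big(V_{k+1}^{(j,-)}\cup V_{k+1}^{(j,+)}\big)$ from Section~\ref{S:2.3}, in which $V_{k+1}^{(j,-)}$ (resp. $V_{k+1}^{(j,+)}$) is a copy of $D_k^\omega$ between $b_{k+1}^\omega$ and $(\{j\},\sh)$ (resp. between $(\{j\},\sh)$ and $t_{k+1}^\omega$), identified via the isometries $I_{k+1}^{(j,-)}$, $I_{k+1}^{(j,+)}$ of \eqref{E:2.3_9}--\eqref{E:2.3_10}, whose action on the second coordinate is the increasing affine map $r\mapsto 2r$, resp. $r\mapsto 2r-1$. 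First I would choose a sequence $(\cB_j)_{j\in\bn}$ of mutually independent, atomless sub-$\sigma$-algebras of $\Sigma$ and, inside each, a set $M_j\in\cB_j$ with $\bp(M_j)=\frac12$; then apply the induction hypothesis to the atomless spaces $(M_j,\cB_j|_{M_j},2\bp|_{M_j})$ and $(\Omega\setminus M_j,\cB_j|_{\Omega\setminus M_j},2\bp|_{\Omega\setminus M_j})$ to get families $R_j^-(\cdot)\subset M_j$ and $R_j^+(\cdot)\subset\Omega\setminus M_j$, and define
\[
S_{k+1}(x)=\begin{cases} R_j^-\big(I_{k+1}^{(j,-)}(x)\big), & x\in V_{k+1}^{(j,-)},\\ M_j\cup R_j^+\big(I_{k+1}^{(j,+)}(x)\big), & x\in V_{k+1}^{(j,+)}. \end{cases}
\]
Consistency on the overlaps follows from $R_j^\pm(b_k^\omega)=\emptyset$, $R_j^-(t_k^\omega)=M_j$, $R_j^+(t_k^\omega)=\Omega\setminus M_j$: every lower copy returns $\emptyset$ at $b_{k+1}^\omega$, every upper copy returns $\Omega$ at $t_{k+1}^\omega$, and both copies return $M_j$ at $(\{j\},\sh)$; and $\bp(S_{k+1}(A,r))=r$ drops out of the induction hypothesis via the affine change of variables (e.g. on $V_{k+1}^{(j,+)}$, $\bp(S_{k+1}(x))=\frac12+\frac12(2r-1)=r$).

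It then remains to check \eqref{E:3.2_1} and \eqref{E:3.2_2} for $S_{k+1}$. Property \eqref{E:3.2_2} is easy: if $\min(x)\neq\min(y)$ and neither is a pole, then $x\in V_{k+1}^{(i,\pm)}$, $y\in V_{k+1}^{(j,\pm)}$ with $i=\min(x)\neq\min(y)=j$, and by construction $S_{k+1}(x)\in\cB_i$, $S_{k+1}(y)\in\cB_j$ (recall $M_i\in\cB_i$ and $R_i^\pm$ is $\cB_i$-valued), so independence is inherited from that of $\cB_i,\cB_j$; if one of $x,y$ is a pole its set is $\emptyset$ or $\Omega$. For \eqref{E:3.2_1}, the key point to pin down is that a vertical path of $D_{k+1}^\omega$ never switches the index $j$: every edge lies inside a single $V_{k+1}^{(j,-)}$ or $V_{k+1}^{(j,+)}$, and these meet only in $b_{k+1}^\omega,(\{j\},\sh),t_{k+1}^\omega$, so an increasing path climbs through one $V_{k+1}^{(j,-)}$ up to $(\{j\},\sh)$ and then continues through the \emph{same} $V_{k+1}^{(j,+)}$; hence two points $x,y$ on a common vertical path with $r_x\le r_y$ fall into one of three cases (both in $V_{k+1}^{(j,-)}$; both in $V_{k+1}^{(j,+)}$; $x\in V_{k+1}^{(j,-)}$ and $y\in V_{k+1}^{(j,+)}$), and in each case $S_{k+1}(x)\subset S_{k+1}(y)$ follows from the induction hypothesis via the order-preserving isometries $I_{k+1}^{(j,\pm)}$, the mixed case because $R_j^-(\cdot)\subset R_j^-(t_k^\omega)=M_j\subset M_j\cup R_j^+(\cdot)$. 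I do not expect any single computation to be hard; the main obstacle is the combination of (a) securing the countable supply of mutually independent atomless sub-$\sigma$-algebras and then recursing \emph{inside} each --- which relies on the ``self-similarity'' of atomless probability spaces, namely that every positive-measure piece, renormalized, is again atomless and again carries such families --- and (b) the case analysis above showing vertical paths remain within a single block; the rest is bookkeeping.
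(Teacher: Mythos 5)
Your proof is correct, but it takes a genuinely different route from the paper's. The paper defines the sets $S_k(A,r)$ non-recursively: it fixes a family of independent Bernoulli variables $(\vp_A)_{A\in[\bn]^{\le k}\setminus\{\emptyset\}}$ indexed by the tree, writes $r=\sum_i\sigma_i2^{-i}$, sets $S_k(A,r)=\bigcup_{i:\sigma_i=1}T_k^i(A,r)$ with each $T_k^i(A,r)$ an explicit intersection of events $\{\vp_{A|_m}=\pm1\}$, and then verifies the chain condition \eqref{E:3.2_1} edge by edge through a direct computation with the indices $i^-=\max\{i<k:\sigma_i=1\}$ and $i^+=\max\{i<k:\sigma_i=0\}$ from Lemma \ref{L:2.1}(c); independence \eqref{E:3.2_2} falls out because $S_k(A,r)$ is measurable with respect to $\sigma(\vp_C:\min C=\min A)$. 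You instead run an induction on $k$ over the self-similar decomposition $V_{k+1}^\omega=\bigcup_j(V_{k+1}^{(j,-)}\cup V_{k+1}^{(j,+)})$, recursing inside the conditioned spaces $(M_j,2\bp|_{M_j})$ and $(\Omega\setminus M_j,2\bp|_{\Omega\setminus M_j})$; this is essentially the "rolled-up" version of the paper's construction --- indeed, immediately after its proof the paper establishes exactly your recursive identities (the sets $S_{k-1}^{(j,\pm)}$ and the conditioned Bernoulli families $\vpt_A,\vph_A$ on $\{\vp_{\{j\}}=\mp1\}$) because it needs them to run the induction in Theorem \ref{T:3:2}. What your approach buys is a cleaner conceptual proof of \eqref{E:3.2_1} (the dyadic-expansion case analysis is replaced by the observation that an increasing path stays in a single $j$-block) and it makes the normalizations $S_k(b_k^\omega)=\emptyset$, $S_k(t_k^\omega)=\Omega$, $\bp(S_k(A,r))=r$ part of the inductive statement rather than an afterthought; what it costs is the explicit formula (which the paper reuses verbatim in the computations of Theorem \ref{T:3:2}) and a little more measure-theoretic bookkeeping in producing countably many mutually independent atomless sub-$\sigma$-algebras and recursing inside each --- though that is standard and no more demanding than the paper's unexamined assumption that $(\Omega,\Sigma,\bp)$ carries a countable independent Bernoulli family.
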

\begin{proof}
We are using the non recursive description of the diamond graph $D_k^\omega=(V_k^\omega,E_k^\omega)$. Let $k\in\bn$ be fixed. We consider a (countable) family $\{\vp_A:A\kin[\bn]^{\le k}\setminus\{\emptyset\}\}$, indexed  by the elements of $[\bn]^{\le k}\setminus\{\emptyset\}$, of independent Bernoulli random variables,  \ie
 $\bp(\vp_A\keq1)=\bp(\vp_A\keq-1)=1/2$. For $(A,r)\in V_k^\omega$, we will define a  measurable set $S_k(A,r)\subset \Omega$ as follows.
We put $S_k(\emptyset,0)=\emptyset$ and $S_k(\emptyset,1)=\Omega$.
For $(A,r)\in V_k^\omega$,  with $A\not=\emptyset$, say $A=\{a_1, a_2,\ldots ,a_n\}$ and $r=\sum_{i=1}^n \sigma_i 2^{-i}\in \bb_n$, (\ie $\sigma_i\in\{0,1\}$, $i=1,2\ldots, n-1$ and $\sigma_n=1$) we proceed as follows. For $1\le i\le n$, so that $\sigma_i=1$, define
\begin{equation}\label{E:3.2_3}T_k^i(A,r):=\{\vp_{A|_i}=1\}\cap\bigcap_{m<i, \sigma_m=1} \{\vp_{A|_m}=-1\}\cap  \bigcap_{m<i, \sigma_m=0} \{\vp_{A|_m}=1\}.\end{equation}
Then we put
\begin{equation}\label{E:3.2_4}
S_k(A,r):=\bigcup_{i=1,\sigma_i=1}^n T_k^i(A,r).
\end{equation}
Let us make some easy observations. Since for $1\le i<j\le n$ with $\sigma_{i}=\sigma_j=1$, it follows that
\begin{equation*}
T_k^i(A,r)\subset \{\vp_{A|_i}=1\} \text{ and }T_k^j(A,r)\subset\{ \vp_{A|_i}=-1\},
\end{equation*}
it follows that  the $T_k^i(A,r)$'s are pairwise disjoint for $1\le i\le n$, with $\sigma_i=1$.
Secondly from the independence of the sequence $(\vp_{A|_i})_{i=1}^n$ it follows that 
\begin{equation}\label{E:3.2_5}
\bp( T_k^i(A,r))=2^{-i},\text{ whenever $1\le i\le n$, with $\sigma_i=1$}.
\end{equation}
It follows therefore that
\begin{equation}\label{E:3.2_6}
\bp(S_k(A,r))=\sum_{i=1,\sigma_i=1}^n \bp(T_k^i(A,r))=\sum_{i=1,\sigma_i=1}^n 2^{-i}=r. 
\end{equation}
Next assume that   $(A,r),(B,s)\in V_k^\omega$ and $\{(A,r),(B,s)\}\in E_k^\omega$. As noticed previously it follows that $r=s\pm 2^{-k}$. We claim that $S_k(A,r)\subset S_k(B,s)$, if $r=s-2^{-k}$, and that $S_k(B,s)\subset S_k(A,r)$ if $r=s+2^{-k}$. In order to show our claim we can, as noted before, assume that $B\in[\bn]^k$, $s=\sum_{i=1}^k \sigma_i 2^{-i}\in \bb_k$, and that one of the two following  cases holds:
 
 \noindent{\bf Case 1.}
  $r=s-2^{-k}$ and 
 $$r=\sum_{i=1}^{i^-} \sigma_i 2^{-i},\text{ and } A=B|_{i^-} \text{ with  $i^-=\max\{1\le i\le k-1\colon\sigma_i=1\}$}$$
if that maximum exists and otherwise $A=\emptyset$ and $r=0$.

\noindent{\bf Case 2.} $r=s+2^{-k}$  and 
$$r=\sum_{i=1}^{i^+-1} \sigma_i 2^{-i}+2^{-i^{+}},\text{ and } A=B|_{i^+}\text{ with $i^+=\max\{1\le i\le k-1\colon\sigma_i=0\}$}$$
if that maximum exists and otherwise  $A=\emptyset$ and $r=1$.

In order to show our claim in the first case we can assume that $i^-$ exists, since otherwise $S_k(A,r)=\emptyset$.
Since for every $i\le i^-$, for which  $\sigma_i=1$, we deduce that  
  \begin{align*}
  T_k^i(A,r)&=\{\vp_{A|_i}=1\}\cap\bigcap_{m<i, \sigma_m=1} \{\vp_{A|_m}=-1\}\cap  \bigcap_{m<i, \sigma_m=0} \{\vp_{A|_m}=1\}\\
                 & =\{\vp_{B|_i}=1\}\cap\bigcap_{m<i, \sigma_m=1} \{\vp_{B|_m}=-1\}\cap  \bigcap_{m<i, \sigma_m=0} \{\vp_{B|_m}=1\}\\
                  &=T_k^i(B,s),
                 \end{align*}
we deduce our claim in that case.

In the second case we can similarly  observe that $ T_k^i(A,r)=T_k^i(B,s)$ if $i<i^+$. By definition of $i^+$, $\sigma_{i^+}=0$ and $\sigma_i=1$ for all $i^+<i\le k$. Thus we note that for all $i^+<i\le k$
\begin{align*}
T_k^i(B,s)&=\\
 \{\vp_{B|_i}=1\}\cap &\bigcap_{m=1, \sigma_m=1}^{i^+-1}\{ \vp_{B|_m}=-1\}\cap \bigcap_{i^+<m<i}\{ \vp_{B|_i}=-1\}\cap \bigcap_{m=1, \sigma_m=0}^{i^+}\{ \vp_{B|_m}=1\}\\
&\subset\bigcap_{m<i^+, \sigma_m=1}\{ \vp_{B|_m}=-1\}\cap \bigcap_{m< i^+, \sigma_m=0}\{ \vp_{B|_m}=1\}\cap\{\vp_{B|_{i^+}}=1\} \\
& =\{\vp_{A|_{i^+}}=1\}\cap   \bigcap_{m<i^+, \sigma_m=1}\{ \vp_{A|_m}=-1\}\cap \bigcap_{m< i^+, \sigma_m=0}\{ \vp_{A|_m}=1\}\\
&=T_k^{i^{+}}(A,r),
\end{align*}
which also implies our claim in the second case.

An easy consequence of our last observation is that $S_k(A,r)\subset S_k(B,s)
$ whenever $(B,s)$ lies above $(A,r)$ on the same vertical path. From the independence condition of the family $(\vp_A)_{A\in[\bn]^{\le k}}$, and since for every $(A,r)\in V_k^\omega$, with $A\not=\emptyset$,
 the set  $S_k(A,r)$ is in the $\sigma$-algebra generated by  the family $\{\vp_C: C\in[\bn]^{\le k}\setminus\{\emptyset\},\ \min(C)=\min(A)\}$
it follows that $S_k(A,r)$ and $S_k(B,s)$ are independent if $A,B\in[\bn]^{\le k}\setminus\{\emptyset\}$, with $\min(A)\not=\min(B)$.
\end{proof}

We finish our  list of facts with a crucial observation which will allow us to prove our embedding result by induction. Assume that $k\ge 1$ and
fix some $j\in\bn$, and put $N_j= \{j+1,j+2,\ldots \}$. For $A\in[ N_j]^{k-1}$ and $r=\sum_{i=1}^n \sigma_i 2^{-i} \in \bb_{n}$, where $n:=|A|$,  (thus $(A,r)\in V_{k-1}^\omega$), for $1\le i\le n$ we put
$$T^{(i;j,+)}_{k-1}(A,r)=T_{k}^{i+1}(\{j\}\cup A,{\scriptstyle\frac{1+r}{2}}) \text{ and }T^{(i;j,-)}_{k-1}(A,r)=T_{k}^{i+1}(\{j\}\cup A,{\scriptstyle\frac{r}{2}}).$$
Note that $\frac{1+r}2=\frac12+\sum_{i=1}^n \sigma_i 2^{-(1+i)}$ and $\frac{r}2=\sum_{i=1}^n \sigma_i 2^{-(1+i)}$, thus $T^{(i;j,+)}_{k-1}(A,r)$ and $T^{(i;j,-)}_{k-1}(A,r)$ are well defined, and we put

\begin{equation*}
S^{(j,+)}_{k-1}(A,r):=\bigcup_{1\le i\le |A|, \sigma_{i}=1} T^{(i;j,+)}_{k-1}(A,r),
\end{equation*}

and 

\begin{equation*}
S^{(j,-)}_{k-1}(A,r):=\bigcup_{1\le i\le |A|, \sigma_{i}=1}T^{(i;j,-)}_{k-1}(A,r).
\end{equation*}

We define $\vpt_A= \vp_A|_{\{\vp_{\{j\}}=-1\}}$ for $A\kin[N_j]^{k-1}$. Then $(\vpt_A)_{A\in[N_j]^{k-1}}$  are independent Bernoulli random variables on the probability space $(\tilde \bp_j,\tilde \Sigma_j, \tilde\Omega_j) $, with 
$\tilde \Omega_j=\{\vp_{\{j\}}\keq-1\}$, $\tilde \Sigma_j=\Sigma\cap \{\vp_{\{j\}}=-1\}$, and for a measurable $S\subset \tilde \Omega_j$,
$\tilde \bp_j(S)= \bp(S\ |\ \{\vp_{\{j\}}=-1\})= 2\bp(S)$.
 For $A\in[N_j]^{k-1}$ and $r=\sum_{i=1}^n\sigma_i 2^{-i}\in\bb_{n}$, where $n=|A|$, and $i\le n$, with $\sigma_i=1$, we note
 that 
 \begin{align*}
T^{(i;j,+)}_{k-1}(A,r)&=T_{k}^{i+1}(\{j\}\cup A,{\scriptstyle\frac{1+r}{2}}) \\
&=  \{\vp_{\{j\}}=-1\}\cap\{ \vp_{\{j\}\cup A|_{i+1}}=1\}\cap \bigcap_{m=2, \sigma_{m-1}=1}^{i} \{ \vp_{\{j\}\cup A |_m}=-1\} \\
 &\qquad\qquad \cap \bigcap_{m=2, \sigma_{m-1}=0}^{i} \{ \vp_{\{j\}\cup A |_m}=1\} \\
  & =  \{\vpt_{A|_i}=1\}\cap \bigcap_{m<i, \sigma_{m}=1} \{ \vpt_{A |_m}=-1\} \cap \bigcap_{m<i, \sigma_{m}=0} \{ \vpt_{A |_m}=1\}.
 \end{align*}
 So we deduce that the sets $S_{k-1}^{(j,+)}(A,r)$ are defined as the sets $S_k(A,r)$ where we replace $k$ by $k-1$ and the family $(\vp_A)_{A\in[\bn]^{\le k}}$ by the family $(\vpt_A)_{A\in[N_j]^{\le k-1}}$.

\medskip
  
Similarly we can define the probability space  $(\hat\bp_j,\hat\Sigma_j, \hat\Omega_j) $, with $\hat\Omega_j=\{\vp_{\{j\}}\keq1\}$, $\hat\Sigma_j=\Sigma\cap \{\vp_{\{j\}}=1\}$, and $\hat\bp_j(S)= \bp(S\ |\ \{\vp_{\{j\}}=1\})= 2\bp(S)$ for a measurable $S\subset \hat \Omega_j$, and on it the random variables
$\vph_A= \vp_A|_{\{\vp_{\{j\}}=1\}}$ for $A\kin[N_j]^{k-1}$. Then $(\vph_A)_{A\in[N_j]^{k-1}}$ are independent Bernoulli random variables with respect to $\hat\bp_j$. We deduce as before that the sets $S_{k-1}^{(j,-)}(A,r)$ are defined as the sets $S_k(A,r)$ where we replace $k$ by $k-1$ and the family $(\vp_A)_{A\in[\bn]^{\le k}}$ by the family $(\vph_A)_{A\in[N_j]^{\le k-1}}$. This last observation will make it possible to show the following result by a straightforward induction on $k\in\bn_0$.

\begin{theorem}\label{T:3:2} For every $k\kin\bn_0$ there exists $\Psi_k: D_k^\omega\to L_1[0,1]$, such that
if $x$ and $y$ belong to the same vertical path then
\begin{equation}\label{E:3.2_7}
\big\| \Psi_k(x)-\Psi_k(y)\big\|= d_k(x,y),
\end{equation}
and if $x$ and $y$ do not belong to the same vertical path then 
\begin{equation}\label{E:3.2_8}
\frac{d_k(x,y)}{2} \le \big\| \Psi_k(x)-\Psi_k(y)\big\|\le  d_k(x,y).
\end{equation}
\end{theorem}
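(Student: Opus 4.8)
The plan is to take for $\Psi_k$ the rescaled indicator embedding built from Lemma~\ref{L:3.1}. Fix an atomless probability space $(\Omega,\Sigma,\bp)$ and the sets $S_k(x)\subset\Omega$, $x\in V_k^\omega$, provided by that lemma, and set $\Psi_k(x)=2^k\mathbf{1}_{S_k(x)}\in L_1(\Omega,\bp)$, which — the relevant $\sigma$-algebra being countably generated and atomless — we identify with $L_1[0,1]$. Since $\bp(S_k(A,r))=r$ by \eqref{E:3.2_6} and the sets nest along vertical paths by \eqref{E:3.2_1}, for $x,y$ on a common vertical path we get $\|\Psi_k(x)-\Psi_k(y)\|_1=2^k|r_x-r_y|=d_k(x,y)$, which is \eqref{E:3.2_7}; in particular the two endpoints of each edge (which lie on a common vertical path of length one) go to points at distance exactly $1$, so $\Psi_k$ is $1$-Lipschitz and the triangle inequality along a geodesic gives the upper bound in \eqref{E:3.2_8} for all pairs. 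The remaining content is the lower bound in \eqref{E:3.2_8}, which I would prove by induction on $k$; the base case $k\le 1$ is the direct computation on $K_{2,\omega}$, where for $i\ne j$ the sets $S_1(\{i\},\sh)=\{\vp_{\{i\}}=1\}$ and $S_1(\{j\},\sh)=\{\vp_{\{j\}}=1\}$ are independent of measure $\tfrac12$, whence $\|\Psi_1(\{i\},\sh)-\Psi_1(\{j\},\sh)\|_1=1=\tfrac12 d_1$.

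For the inductive step assume the lower bound of \eqref{E:3.2_8} at depth $k-1$ (on any atomless probability space) and let $x=(A,r),y=(B,s)\in V_k^\omega$ not lie on a common vertical path; then $x,y\notin\{b_k^\omega,t_k^\omega\}$, since those lie on a common vertical path with every vertex by \eqref{E:2.3_2}. Consider the decomposition $V_k^\omega=\bigcup_j V^{(j,+)}_k\cup\bigcup_j V^{(j,-)}_k$. Suppose first that $x$ and $y$ lie in \emph{different petals}, i.e. $x\in V^{(i,\pm)}_k$, $y\in V^{(j,\pm)}_k$ with $i\ne j$; by \eqref{E:2.3_7} the only alternative for a non-vertical pair is that $x,y$ lie in a common petal, treated below. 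In the different-petal case $\min(x)=i\ne j=\min(y)$, so $S_k(x)$ and $S_k(y)$ are independent by \eqref{E:3.2_2}, hence $\|\Psi_k(x)-\Psi_k(y)\|_1=2^k\bp\big(S_k(x)\symdif S_k(y)\big)=2^k(r+s-2rs)$. Combined with the explicit distances \eqref{E:2.3_4}, \eqref{E:2.3_5}, \eqref{E:2.3_8} and the constraints on $(r,s)$ forced by the petals ($r,s\le\sh$; $r,s\ge\sh$; or $r\ge\sh\ge s$), the required inequality $r+s-2rs\ge\tfrac12\,2^{-k}d_k(x,y)$ reduces to elementary estimates such as $r+s\ge 4rs$ that follow from $rs\le(r+s)^2/4$ on those ranges; they hold with equality at $k=1$, which is why the constant is $2$ (optimal, by \cite{LeeRaghavendra2010}).

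Suppose now that $x$ and $y$ lie in a \emph{common petal}, say $x,y\in V^{(j,+)}_k$; since $t_k^\omega$ and $(\{j\},\sh)$ lie on a common vertical path with every vertex of $V^{(j,+)}_k$, we may write $x=(\{j\}\cup A',\tfrac{1+r'}2)$ and $y=(\{j\}\cup B',\tfrac{1+s'}2)$ with $(A',r'),(B',s')$ interior vertices of $D_{k-1}^\omega$. By the discussion preceding the theorem, $S_k(x)=\hat\Omega_j\cup S^{(j,+)}_{k-1}(A',r')$ and $S_k(y)=\hat\Omega_j\cup S^{(j,+)}_{k-1}(B',s')$ with both $S^{(j,+)}_{k-1}$-sets contained in $\tilde\Omega_j$, so the common part $\hat\Omega_j$ cancels and, using $\bp=\tfrac12\tilde\bp_j$ on $\tilde\Omega_j$, $\|\Psi_k(x)-\Psi_k(y)\|_1=2^{k-1}\tilde\bp_j\big(S^{(j,+)}_{k-1}(A',r')\symdif S^{(j,+)}_{k-1}(B',s')\big)$. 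The same discussion identifies the family $(S^{(j,+)}_{k-1}(\cdot))$ with the depth-$(k-1)$ family $(S_{k-1}(\cdot))$ built on the atomless space $(\tilde\Omega_j,\tilde\Sigma_j,\tilde\bp_j)$ from the Bernoulli family $(\vpt_A)$; denoting by $\Psi'_{k-1}$ the resulting embedding, the right-hand side equals $\|\Psi'_{k-1}(I^{(j,+)}_k x)-\Psi'_{k-1}(I^{(j,+)}_k y)\|_1$. Now $I^{(j,+)}_k$ is an isometry onto $D_{k-1}^\omega$ acting affinely on the last coordinate, and two vertices of $D_\ell^\omega$ lie on a common vertical path iff their distance equals $2^\ell$ times the difference of their last coordinates (the forward direction is \eqref{E:2.3_3}; the reverse holds because a geodesic realizing that distance is forced to be monotone in the last coordinate); hence $I^{(j,+)}_k x,I^{(j,+)}_k y$ do not lie on a common vertical path, and the inductive hypothesis gives $\|\Psi'_{k-1}(I^{(j,+)}_k x)-\Psi'_{k-1}(I^{(j,+)}_k y)\|_1\ge\tfrac12 d_{k-1}(I^{(j,+)}_k x,I^{(j,+)}_k y)=\tfrac12 d_k(x,y)$. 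The subcase $x,y\in V^{(j,-)}_k$ is entirely analogous, with $S_k(x)=S^{(j,-)}_{k-1}(A',r')\subset\hat\Omega_j$ where $(S^{(j,-)}_{k-1}(\cdot))$ is the depth-$(k-1)$ family built on $(\hat\Omega_j,\hat\Sigma_j,\hat\bp_j)$ from $(\vph_A)$, and with $I^{(j,-)}_k$ in place of $I^{(j,+)}_k$.

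I expect the main difficulty to be organizational rather than computational: once $b_k^\omega,t_k^\omega$ are removed and \eqref{E:2.3_7} is invoked, one must be sure that every remaining non-vertical pair is either genuinely split across two petals — so that independence and the elementary inequalities on the correct ranges close the argument — or genuinely non-vertical inside a single petal — so that the self-similarity identities preceding the theorem, together with the isometries $I^{(j,\pm)}_k$, reduce it to depth $k-1$. The one point needing care is verifying that ``not on a common vertical path'' is preserved under the petal isometries, i.e. the characterization used in the previous paragraph.
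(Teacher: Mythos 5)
Your proposal is correct and follows essentially the same route as the paper's proof: the same map $x\mapsto 2^k\chi_{S_k(x)}$, the same induction, the same reduction of the single‑petal case to depth $k-1$ via the self‑similarity identities and the conditional probability spaces $(\tilde\Omega_j,\tilde\bp_j)$, $(\hat\Omega_j,\hat\bp_j)$, and the same independence computation $r+s-2rs$ for cross‑petal pairs (the paper phrases the $(+,+)$ and $(-,-)$ cases via complements and the bound $2(1-s)\le 1$ rather than your uniform AM--GM estimate, but these are equivalent elementary inequalities on the stated ranges). Your explicit verification that ``not on a common vertical path'' is preserved under $I^{(j,\pm)}_k$ is a point the paper leaves implicit, and it is correct.
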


\begin{proof} For $k\in\bn_0$ and $x\in V_k^\omega$ let $S_k(x)\subset[0,1]$ be defined as in Lemma \ref{L:3.1}, and define the map $\Psi_k: V_k^\omega\to L_1[0,1],\quad x\mapsto 2^k\chi_{S_k(x)}.$

\medskip

For $k=0$, our claim is trivially true. Assume that our claim holds for $k\in\bn\cup\{0\}$. The equality \eqref{E:3.2_7} follows immediately from \eqref{E:3.2_1} and \eqref{E:3.2_6}, and the fact \eqref{E:2.3_3}, shown in Section \ref{S:2.3},
that 
$d_k\big((A,r),(B,s)\big)=2^{k} |s-r|$, if $(A,r),(B,s)$, lie in the same vertical path.  For general $(A,r)$ and $(B,s)$ in $V^\omega_{k+1}$ we proceed as follows. Note first  that we can assume that $(A,r), (B,s)\in V_{k+1}^\omega\setminus \{b^\omega_{k+1},t^\omega_{k+1}\}$, otherwise they would belong to the same vertical path. First let us assume that $(A,r)$ and $(B,s)$ are in $V^{(j,+)}_{k+1}$ for some $j\in\bn$. 
 Thus, write $r$ and $s$ as $r=\sum_{i=1}^{\#A} \sigma_i 2^{-i}\in\bb_{\#A}$ and  $s=\sum_{i=1}^{\#B} \tau_i 2^{-i}\in\bb_{\#B}$.
It follows that we can write $A=\{j\}\cup A'$ and $B=\{j\} \cup B'$, with   $A',B'\in[\bn]^{\le k}$, and $r=\frac{r'+1}2$ and  $s=\frac{s'+1}2$ with $r'=\sum_{i=1}^{\#A'} \sigma'_i 2^{-i}\in \bb_{\#A'}$ and  $s'=\sum_{i=1}^{\#A'} \tau'_i 2^{-i}\in \bb_{\#B'}$. Note that $\sigma_i'=\sigma_{i+1}$, for $i=1,2,\ldots, \#A'$ and  $\tau'_i=\tau_{i+1}$, for $i=1,2,\ldots, \#B'$. It follows therefore from the definition of $S_{k+1}(A,r) $ in \eqref{E:3.2_4}  and the definition of the $T_{k+1}^i(A,s)$, $i=1,2,\ldots, \#A$, with $\sigma_i=1$, in \eqref{E:3.2_3} that 
\begin{align*}
S_{k+1}(A,r)&=\big\{\vp_{\{j\}}=1\big\}\cup  \bigcup_{2\le i\le \#A, \sigma_i=1} T_{k+1}^i(A,r)\\
&=\big\{\vp_{\{j\}}=1\big\}\cup  \bigcup_{1\le i\le \#A', \sigma'_{i}=1} T_{k+1}^{i+1}(\{j\}\cup A',{\scriptstyle \frac{r'+1}2}) \\ 
& =\big\{\vp_{\{j\}}=1\big\}\cup S^{(j,+)}_k(A',r')
\intertext{and, similarly}
S_{k+1}(B,s)&= \{\vp_{\{j\}}=1\} \cup S^{(j,+)}_k(B',s'),             
\end{align*}
where the sets $S^{(j,+)}_k(A',r')$, and $S^{(j,+)}_k(B',s')$ have been defined before the statement of the theorem. It follows from the prior  observations and the induction hypothesis (using the probability space $(\tilde\Omega_j, \tilde\Sigma_j,\tilde\bp_j)$ as defined above) that 
\begin{align*} 
2^{k+1}\big\|\chi_{S_{k+1}(A,r)}- \chi_{S_{k+1}(B,s)}\big\|_{L_1[0,1]}&=2^{k+1} \big\|\chi_{S^{(j,+)}_{k}(A',r')}- \chi_{S_{k}^{(j,+)}(B',s')}\big\|_{L_1[0,1]}\\
&=2^k \big\|\chi_{S^{(j,+)}_{k}(A',r')}- \chi_{S_{k}^{(j,+)}(B',s')}\big\|_{L_1(\tilde\bp_j)},
\end{align*}
but
\begin{equation*}
\frac{d_k\big((A',r'),(B',s') \big)}{2}\le 2^k \big\|\chi_{S^{(j,+)}_{k}(A',r')}- \chi_{S_{k}^{(j,+)}(B',s')}\big\|_{L_1(\tilde\bp_j)}   \le d_k\big((A',r'),(B',s') \big).
\end{equation*}

Using now \eqref{E:2.3_10} we conclude that
\begin{equation*}
\frac{d_{k+1}\big((A,r),(B,s) \big)}{2} \le \big\| \Psi_{k+1}\big((A,r)\big)-\Psi_k\big((B,s)\big)\big\|_{L_1[0,1]}\le  d_{k+1}\big((A,r),(B,s) \big).
\end{equation*} \label{E:1.2}

If $(A,r)$ and $(B,s)$ are in $V^{(j,-)}_{k+1}$ for some $j\in\bn$, we deduce our claim similarly as in the previous case.

\medskip

If $(A,r)\in V^{(j,-)}_{k+1}$ and $(B,s)\in V^{(j,+)}_{k+1}$, for some $j\in\bn$ (or vice versa), then $(A,r)$ and $(B,s)$ lie on the same vertical path, a case we already handled.

\medskip

If $(A,r)\in V^{(i,+)}_{k+1}$ and $(B,s)\in V^{(j,+)}_{k+1}$, for $i\neq j$,   we can assume that $r\ge s> \frac12$ and  it follows from \eqref{E:2.2_1} and \eqref{E:3.2_2} that 
\begin{align*}
\| \chi_{S_{k+1}(A,r)} -\chi_{S_{k+1}(B,s)}&\|_{L_1[0,1]}=
 \| \chi_{S^c_{k+1}(A,r)} -\chi_{S^c_{k+1}(B,s)}\|_{L_1[0,1]}\\
 = \bp\big(S^c_{k+1}(A,r)\!\setminus&\!S^c_{k+1}(B,s)\big)+ \bp\big(S^c_{k+1}(B,s)\!\setminus\!S^c_{k+1}(A,r)\big) \\
 = (1-r) +(1-&s) -2(1-r)(1-s) \\
 \ge  (1-r) +(1-&s)-(1-r)\\ 
=1-s \hskip 1.4cm&\\
\ge \frac12  2^{-(k+1)}d_{k+1}&\big((A,r),(B,s)\big),&
\end{align*}
and secondly
\begin{align*}
 \big\| \chi_{S_{k+1}(A,r)} -\chi_{S_{k+1}(B,s)}\big\|_{L_1[0,1]}&=  \big\|\chi_{S^c_{k+1}(A,r)} -\chi_{S^c_{k+1}(B,s)}\big\|_{L_1[0,1]}\\
  & \le \bp(S^c_{k+1}(A,r))+\bp(S^c_{k+1}(B,s))\\
  & =2^{-(k+1)} d_{k+1}\big((A,r),(B,s)\big),
\end{align*}
which implies our claim in that case.

\medskip

If $(A,r)\in V^{(i,-)}_{k+1}$ and $(B,s)\in V^{(j,-)}_{k+1}$, for $i\neq j$,   we can proceed in a similar way as in the previous case.
  
\medskip

If $(A,r)\in V^{(i,-)}_k$ and $(B,s)\in V^{(j,+)}_k$ for $i\neq j$, we first assume that $r+s\le 1$ and thus by \eqref{E:2.3_8}, \eqref{E:2.2_1}, and \eqref{E:3.2_2}
\begin{equation*}
\big\| \chi_{S_{k+1}(A,r)} -\chi_{S_{k+1}(B,s)}\big\|_{L_1[0,1]}= r+s-2rs\le r+s= \frac{d_{k+1}\big((A,r),(B,s)\big)}{2^{(k+1)}}.
\end{equation*}
Secondly the geometric-arithmetic mean inequality and the assumption that $r+s\le 1$ we deduce that
$$r^{1/2}s^{1/2}\le \frac{r+s}2\le\frac{\sqrt{r+s}}2,$$
and thus $2rs \le \frac{r+s}2$ which implies by \eqref{E:2.3_8} that 
$$r+s-2rs\ge \frac{r+s}2=\frac12  2^{-(k+1)}d_{k+1}\big((A,r),(B,s)\big),$$
 which implies our claim.

\medskip

If $(A,r)\in V^{(i,-)}_{k+1}$ and $(B,s)\in V^{(j,+)}_{k+1}$ for $i\neq j$, and $r+s\ge 1$, we can proceed similarly, which finishes our induction step and the proof of our theorem. 
\end{proof}

\subsection{Embeddability into $L_p(Y)$}\label{S:3.3}

Recall that $D_{k}^2$ stands for the $2$-branching diamond of depth $k$. In this section it is shown that one can build an embedding of $D_k^\omega$ into $L_p(Y)$ out of an embedding of $D_k^2$ into $Y$. This ``embedding transfer'' is useful in regards of some renorming problems that will be discussed in Section \ref{S:6}.

\begin{theorem}\label{T:3.2} If for every $k\in\bn$, $D_k^2$ embeds into $Y$ with distortion at most $C\ge 1$ then for every $k\in\bn$, $D_k^\omega$ embeds into $L_p([0,1],Y)$ with distortion at most $2^{1/p}C$. More precisely, for each $k \ge 1$, there exists a $1$-Lipschitz map $\bar{\phi}_k\colon D_k^{\omega} \to L_p([0,1],Y)$ such that whenever $x$ and $y$  belong to the same vertical path in $D_{k}^{\omega}$
\begin{equation}\label{E:3.3_1}
\|\bar{\phi}_k(x) - \bar{\phi}_k(y)\|_p \stackrel{}{\ge} \frac1C d_k(x,y), 
\end{equation} 
and whenever $x$ and $y$ do not belong to the same vertical path in $D_{k}^{\omega}$
\begin{equation}\label{E:3.3_2}
\|\bar{\phi}_k(x) - \bar{\phi}_k(y)\|_p \stackrel{}{\ge} \frac{1}{2^{1/p}C}.
\end{equation} 

\end{theorem}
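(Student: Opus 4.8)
The plan is to build $\bar\phi_k$ from a fixed embedding $\phi_k\colon D_k^2\to Y$ of distortion at most $C$, together with the self-similar structure of $D_k^\omega$ (the isometries $I^{(j,\pm)}_k$, $F^{(j,+)}_k$ from Section~\ref{S:2.3}) and the atomless structure of $[0,1]$, which allows us to spread the countably many branches at each level into disjoint coordinate directions. First I would rescale $\phi_k$ so that $\phi_k$ is $1$-Lipschitz and $\|\phi_k(x)-\phi_k(y)\|_Y\ge \tfrac1C d_k^2(x,y)$ for all $x,y$, with $\phi_k(b_k^2)=0$ (a translation costs nothing). The idea is to define $\bar\phi_k$ recursively in $k$: at level $1$, $D_1^\omega$ is $K_{2,\omega}$, and I would send $b_1^\omega\mapsto 0$, $t_1^\omega\mapsto \mathbf 1$ (the constant function equal to a fixed norm-one vector, or simply the real-valued constant $1$ after identifying a line in $Y$), and the $j$-th midpoint vertex $(\{j\},\tfrac12)\mapsto$ a function agreeing with $0$ on $[0,\tfrac12]$ and with $\mathbf 1$ on $[\tfrac12,1]$ \emph{except} that we use disjoint ``decorations'' in the $Y$-direction indexed by $j$; here the $L_p$-average of the distance between two distinct midpoints is $2^{1/p}$ times the average of ``half up, half down'', giving the factor $2^{1/p}$ on the lower bound for off-path pairs.

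For the inductive step, write $V_k^\omega=\bigcup_j V_k^{(j,+)}\cup\bigcup_j V_k^{(j,-)}$. On the lower diamond $V_k^{(j,-)}$ I would use the isometry $I_k^{(j,-)}\colon V_k^{(j,-)}\to V_{k-1}^\omega$ and set $\bar\phi_k|_{V_k^{(j,-)}} = \tfrac12 \, (\bar\phi_{k-1}\circ I_k^{(j,-)})$ composed with an isometric ``relabeling'' of $[0,1]$ that sends the copy of $[0,1]$ used for the depth-$(k-1)$ graph into a sub-interval (or sub-$\sigma$-algebra) attached to the index $j$; on the upper diamond $V_k^{(j,+)}$ I would do the analogous thing with $F_k^{(j,+)}$, translated by the constant function $\mathbf 1$ so that $t_k^\omega$ and $b_k^\omega$ map consistently and the vertical path from $b_k^\omega$ to $t_k^\omega$ through $(\{j\},\tfrac12)$ is handled isometrically (after the $2^k$ rescaling). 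The key bookkeeping is exactly as in the $L_1$ embedding of Section~\ref{S:3.2}: for two vertices coming from \emph{different} branches $i\ne j$, the geodesic in $D_k^\omega$ must pass through $t_k^\omega$ or $b_k^\omega$, and because the decorations for $i$ and $j$ live on independent parts of $[0,1]$, the $L_p$-distance is, pointwise, a convex combination; one then checks that on each fiber the distance is at least half the fiber-distance, and integrating in $L_p$ costs only the factor $2^{1/p}$ relative to the worst fiber.

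Concretely, the verification splits into the same cases as Theorem~\ref{T:3:2}: (i) $x,y$ on the same vertical path — here \eqref{E:2.3_3} plus the inductive ``$\tfrac1C$ on vertical paths'' statement gives \eqref{E:3.3_1} exactly, and $1$-Lipschitzness is preserved because $\bar\phi_k$ restricted to a vertical path is, up to the $2^k$ scaling and the isometric relabelings, a concatenation of the level-$(k-1)$ maps; (ii) $x\in V_k^{(i,+)}$, $y\in V_k^{(j,+)}$, $i\ne j$, using \eqref{E:2.3_4} and the independence of the $i$- and $j$-decorations (this is where the $2^{-1}$ inside $2^{1/p}C$ comes from — the off-path distance in the target is comparable to $d_k(x,t_k^\omega)+d_k(t_k^\omega,y)$ up to a factor $\tfrac12$ coming from the $L_p$-average being against a non-constant indicator); (iii) $x\in V_k^{(i,-)}$, $y\in V_k^{(j,-)}$, $i\ne j$, symmetric via \eqref{E:2.3_5}; (iv) $x\in V_k^{(i,+)}$, $y\in V_k^{(j,-)}$, using \eqref{E:2.3_8} and the arithmetic–geometric mean trick ($2rs\le \tfrac{r+s}2$ when $r+s\le 1$, and the dual statement when $r+s\ge 1$) exactly as in the proof of Theorem~\ref{T:3:2}; (v) $x,y$ in the same $V_k^{(j,\pm)}$, reduced to level $k-1$ by the isometries $I_k^{(j,\pm)}$, $F_k^{(j,+)}$ together with the inductive hypothesis for $\bar\phi_{k-1}$ on $L_p([0,1],Y)$. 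In each off-path case one shows $\|\bar\phi_k(x)-\bar\phi_k(y)\|_p\ge \tfrac{1}{2^{1/p}C}\cdot 2^{-k}\,d_k(x,y)$, but since the diameter of $D_k^\omega$ is $2^k$, this yields \eqref{E:3.3_2} in the normalized form stated (and more generally the scaled version); the upper bound $\|\bar\phi_k(x)-\bar\phi_k(y)\|_p\le d_k(x,y)$ follows by the triangle inequality along a geodesic, since each edge contributes at most $1$ by construction.

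The step I expect to be the main obstacle is getting the \emph{independence/disjointness} of the branch decorations to interact correctly with the $L_p$-norm while keeping the map $1$-Lipschitz and keeping the vertical-path behavior \emph{exactly} isometric (factor $\tfrac1C$, not $\tfrac1{2^{1/p}C}$). The natural temptation is to put each branch's copy of $L_p([0,1],Y)$ on a disjoint sub-interval of $[0,1]$ of measure depending on $j$, but then the relabeling is not measure-preserving and the vertical estimate degrades; the right move — as in Lemma~\ref{L:3.1} — is to use disjoint \emph{$\sigma$-subalgebras} generated by independent Bernoulli variables rather than disjoint sub-intervals, so that each branch sees a full isometric copy of $[0,1]$, and then to carefully track how the constant $\mathbf 1$ (the ``top vector'') is shared across branches so that no cancellation is lost. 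Making this precise — essentially porting the set-system $S_k(\cdot)$ of Section~\ref{S:3.2} from the $L_1$ to the $L_p$ setting and tensoring it with the $Y$-valued map $\phi_k$ on the $2$-branching skeleton — is the technical heart of the argument, but it is routine given Lemma~\ref{L:3.1} and the isometries recorded in Section~\ref{S:2.3}.
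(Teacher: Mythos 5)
Your proposal circles the right theme (randomize over the countably many branches with independent Bernoulli variables, and lean on the given embedding of the $2$-branching diamond), but the mechanism you describe is not the one that works, and the step you flag as ``routine'' is precisely where it fails. You propose to port the set-system $S_k(\cdot)$ of Section~\ref{S:3.2} to $L_p$ and to reuse the case analysis of Theorem~\ref{T:3:2}, including the arithmetic--geometric mean estimate $r+s-2rs\ge\frac{r+s}{2}$. That estimate controls $\bp(S\,\symdif\,T)$, i.e.\ the $L_1$-norm of a difference of indicators; in $L_p$ it only controls $\|\chi_S-\chi_T\|_p^p$, so it yields $\|\chi_S-\chi_T\|_p\gtrsim (r+s)^{1/p}$ while the distance to be matched is $(r+s)2^{k}$ --- the indicator-function embedding is genuinely $L_1$-specific and its $L_p$ port has distortion unbounded in $k$. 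The paper's proof avoids this entirely by maintaining a different invariant: for \emph{every} $u\in[0,1]$, the fiber value $\bar\phi_k(x)(u)$ equals $\phi_k(r)$ for an actual vertex $r\in D_k^2$ at the same height as $x$ (condition \eqref{E:3.3_6}). The inductive step assigns branch $j$ to the left or right subdiamond of $D_{k+1}^2$ according to $\vp_j$, so that on the event $\{\vp_i\ne\vp_j\}$ two vertices from different branches land, fiberwise, in \emph{different} subdiamonds of $D_{k+1}^2$ at the correct heights, where $d_{k+1}(r,s)=d_{k+1}(x,y)$ and the hypothesis on $\phi_{k+1}$ gives the full $\frac1C d_{k+1}(x,y)$ pointwise. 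Integrating over the probability-$\frac12$ event is the sole source of the factor $2^{1/p}$; no barycentric computation, no isometries $I_k^{(j,\pm)}$, $F_k^{(j,+)}$, and no constant vector $\mathbf 1$ are involved. Your level-$1$ construction likewise does not use $\phi_1$ where it must: the paper sends $m_j\mapsto \phi_1(v_l)\chi_{\{\vp_j=0\}}+\phi_1(v_r)\chi_{\{\vp_j=1\}}$, so that distinct midpoints are separated by $\|\phi_1(v_l)-\phi_1(v_r)\|$ on an event of measure $\frac12$.

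There is also a quantitative inconsistency you should not let stand: you assert the off-path lower bound $\|\bar\phi_k(x)-\bar\phi_k(y)\|_p\ge \frac{1}{2^{1/p}C}\,2^{-k}d_k(x,y)$ and argue that a diameter bound rescues it. It does not. For $(A,r)\in V_k^{(i,+)}$ and $(B,s)\in V_k^{(j,+)}$ with $i\ne j$ and $r,s$ close to $1$, one has $d_k\big((A,r),(B,s)\big)=(2-r-s)2^k$ as small as $2$, so your bound is off by a factor of order $2^{k}$ and is incompatible with the $1$-Lipschitz, vertically-$\frac1C$-isometric normalization you adopt elsewhere; the bound one must prove is $\frac{1}{2^{1/p}C}\,d_k(x,y)$ with no $2^{-k}$ (this is \eqref{E:3.3_5} in the paper's proof, of which the displayed \eqref{E:3.3_2} is a truncated statement). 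The fiberwise invariant described above delivers exactly this.
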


\begin{proof}
Let $\|\cdot\|_p$ denotes the norm in $L_p([0,1],Y)$ and $b_k^2$ (resp. $t_k^2$) denote the bottom vertex (resp. top vertex) of $D_{k}^2$. Assume, as we may, that, for each $k\ge1$, there exists $\phi_k \colon D_{k}^{2} \rightarrow Y$ that satisfies for every $x,y\in D_{k}^{2}$,
\begin{equation} \label{E:3.3_3}
\frac{1}{C} d_k(x,y) \stackrel{(37a)}{\le} \|\phi_k(x) - \phi_k(y)\|_Y \stackrel{(37b)}{\le} d_k(x,y). 
\end{equation} 
 We shall construct inductively, for each $k \ge 1$, a $1$-Lipschitz mapping $\bar{\phi}_k \colon D_k^{\omega} \rightarrow L_p([0,1],Y)$ satisfying the following conditions:

for every $x$ and $y$ in $D_{k}^{\omega}$ belonging to the same vertical path
\begin{equation} \label{E:3.3_4} \frac{1}{C} d_k(x,y) \le \|\bar{\phi}_k(x) - \bar{\phi}_k(y)\|_p,
\end{equation} 
for every $x$ and $y$ in $D_{k}^{\omega}$ belonging to different vertical paths
\begin{equation} \label{E:3.3_5} \frac{1}{2^{1/p}C} d_k(x,y) \le \|\bar{\phi}_k(x) - \bar{\phi}_k(y)\|_p,
\end{equation} 

for every $x \in D_k^\omega$ and $u\in [0,1]$, there exists $r\in D_k^2$ satisfying
\begin{equation}\label{E:3.3_6}
\bar{\phi}_k(x)(u)=\phi_k(r) \text{ and } d_k(b_k^\omega,x)=d_k(b_k^2,r).
\end{equation} 

\medskip
One more time, we shall give ourselves once and for all a sequence of independent Bernoulli random variables defined on $[0,1]$. Each time we will have to choose countably many independent Bernoulli random variables defined on $[0,1]$ we will assume, as we may, that we will leave out countably infinitely many independent Bernoulli random variables from this sequence.

\medskip

The initialization case goes as follows. Note that $D_1^{2}$ has four vertices: $t_1^2$ (top), $b_1^2$ (bottom), $v_l$ (left vertex), and $v_r$ (right vertex). Suppose we have a mapping
$\phi_1 \colon D_1^{2} \rightarrow Y$ satisfying \eqref{E:3.3_3}. Note that $D_1^{\omega}$ consists of a top vertex $t_1^\omega$, a bottom vertex $b_1^\omega$, and the sequence $(m_j)_{j\in\bn}$ of midpoints of $t_1^\omega$ and $b_1^\omega$. Let $A:= \{\varepsilon_{j} \colon j\in\bn\}$ be a countably infinite collection of independent Bernoulli random variables defined on $[0,1]$. Define $\bar{\phi}_1 \colon D_1^{\omega} \rightarrow L_p([0,1],Y)$ as follows
\begin{equation}\label{E:3.3_7}
\bar{\phi}_1(b_1^\omega) = \phi_1(b_1^2)\cdot \chi_{[0,1]},\ \bar{\phi}_1(t_1^\omega) = \phi_1(t_1^2)\cdot\chi_{[0,1]},
\end{equation} 

and 
\begin{equation}\label{E:3.3_8}
\bar{\phi}_1(m_j) = \phi_1(v_l)\cdot \chi_{\{\vp_{j}=0\}} + \phi_1(v_r)\cdot \chi_{\{\vp_{j}=1\}}. 
\end{equation}

Condition \eqref{E:3.3_6} is clearly statisfied and using (37b) it is easily checked that the map is $1$-Lipschitz. An appeal to $(37a)$ will show that 
\begin{equation*}
\frac{1}{C}d_1(x,y) \le \|\bar{\phi}_1(x) - \bar{\phi}_1(y)\|_p,
\end{equation*}
 holds whenever $x$ and $y$ belong to the same vertical path in $D_1^\omega$, which proves \eqref{E:3.3_4}. If $x=m_i$ and $y=m_j$, for some $i<j$, then elementary computations show that  
\begin{align*}
\|\bar{\phi}_1(m_i)-\bar{\phi}_1(m_j)\|_p^p&\ge \bp(\vp_{i} \ne \vp_{j})\|\phi_1(v_l)-\phi_1(v_r)\|_p^p\\
&= \frac{1}{2}\|\phi_1(v_l)-\phi_1(v_r)\|_p^p\\
&\ge \frac{1}{2C^p}d_1(v_l,v_r)^p
= \frac{1}{2C^p}d_1(m_i,m_j)^p, 
\end{align*}
which gives \eqref{E:3.3_5}.

\medskip

To carry out the inductive step, further notation is needed. Similarly to the notation for the countably branching diamond graph, let  $D^{(l,+)}_{k}$, $D^{(r,+)}_{k}$, $D^{(l,-)}_{k}$, and $D^{(r,-)}_{k}$ denote the ``top left'', ``top right'', ``bottom left'', and ``bottom right'' subdiamonds of $D_{k+1}^{2}$ of depth $k$. We identify each of them with $D_k^{2}$ via the canonical isometry which preserves ``top'', ``bottom'', ``left'' and ``right'' for all subdiamonds. Let $\phi_{k+1}$ satisfy \eqref{E:3.3_3} (where $k$ is substituted with $k+1$) and let $\phi^{(l,+)}_{k}$, $\phi^{(r,+)}_{k}$, $\phi^{(l,-)}_{k}$, and $\phi^{(r,-)}_{k}$ be the restrictions of $\phi_{k+1}$ to $D^{(l,+)}_{k+1}$, $D^{(r,+)}_{k+1}$, $D^{(l,-)}_{k+1}$, and $D^{(r,-)}_{k+1}$ respectively. Via the identification described above we regard $\phi^{(l,+)}_{k}$, $\phi^{(r,+)}_{k}$, $\phi^{(l,-)}_{k}$, and $\phi^{(r,-)}_{k}$ as mappings from $D_{k}^{2}$ into $Y$ satisfying \eqref{E:3.3_3}. By the induction hypothesis, there are maps $\bar{\phi}^{(l,+)}_{k}$, $\bar{\phi}^{(r,+)}_{k}$, $\bar{\phi}^{(l,-)}_{k}$, and $\bar{\phi}^{(r,-)}_{k}$ from $D_{k}^{\omega}$ into $L_p([0,1],Y)$ satisfying \eqref{E:3.3_4} and \eqref{E:3.3_5}. Let $(m_j)_{j\in\bn}$ be the midpoints of $t_{k+1}^\omega$ and $b_{k+1}^\omega$ in $D_{k+1}^\omega$ and take a sequence $(\varepsilon_{j})_{j\in\bn}$ of independent random Bernoulli variables on $[0,1]$. Define $\bar{\phi}_{k+1}\colon D_{k+1}^\omega\to L_p([0,1],Y)$ as follows:  
\begin{equation*}
 \bar{\phi}_{k+1}(x):=\begin{cases} \bar{\phi}^{(l,+)}_{k}(x)\cdot \chi_{\{\vp_{j}=0\}}+\bar{\phi}^{(r,+)}_{k}(x)\cdot \chi_{\{\vp_{j}=1\}}, \text{ if }x \in V_{k+1}^{(j,+)} \text{ for some }$j$,\\
\bar{\phi}^{(l,-)}_{k}(x)\cdot \chi_{\{\vp_{j}=0\}}+\bar{\phi}^{(r,-)}_{k}(x)\cdot \chi_{\{\vp_{j}=1\}}, \text{ if }x \in V_{k+1}^{(j,-)} \text{ for some }$j$.
\end{cases}
\end{equation*}
Note that $\bar{\phi}^{(l,+)}_{k}(x)$ (resp. $\bar{\phi}^{(l,+)}_{k}(x)$, $\bar{\phi}^{(l,+)}_{k}(x)$, $\bar{\phi}^{(l,+)}_{k}(x)$, $\bar{\phi}^{(l,+)}_{k}(x)$) are well-defined when we identify, as we may, $V_{k+1}^{(j,+)}$, and $V_{k+1}^{(j,-)}$ with $D_k^{\omega}$. Moreover, $\bar{\phi}_{k+1}$ is well-defined as the two formulas coincide for $x=m_j$. Condition \eqref{E:3.3_6} is clearly satisfied. We now verify \eqref{E:3.3_4}-\eqref{E:3.3_5} and the $1$-Lipschitz condition for $\bar{\phi}_{k+1}$. 

\begin{description}
\item[Case 1] If $x,y\in V_{k}^{(j,+)}$ or $x,y\in V_{k}^{(j,-)}$ for some $j\in\bn$. We only treat the case $x,y\in V_{k}^{(j,+)}$ since the case $x,y\in V_{k}^{(j,-)}$ is completely similar.
\begin{align}\label{E:3.3_9}
 \|\bar{\phi}_{k+1}&(x)-\bar{\phi}_{k+1}(y)\|_p^p\\
 &=\frac{1}{2}(\|\bar{\phi}^{(l,+)}_{k}(x)-\bar{\phi}^{(l,+)}_{k}(y)\|_p^p
+\|\bar{\phi}^{(r,+)}_{k}(x)-\bar{\phi}^{(r,+)}_{k}(y)\|_p^p). \notag
\end{align}
It now follows easily from \eqref{E:3.3_9} and the induction hypothesis, that if $x$ and $y$ belong to the same vertical path then
\begin{equation*} \label{42} \frac{1}{C} d_k(x,y) \le \|\bar{\phi}_k(x) - \bar{\phi}_k(y)\|_p,
\end{equation*}
and if $x$ and $y$ belong to different vertical paths
\begin{equation*} \label{43} \frac{1}{2^{1/p}C} d_k(x,y) \le \|\bar{\phi}_k(x) - \bar{\phi}_k(y)\|_p.
\end{equation*} 
Moreover, if $x$ and $y$ form a pair of  adjacent vertices in $D_{k+1}^{\omega}$ then either $x,y\in V_{k}^{(j,+)}$ or $x,y\in V_{k}^{(j,-)}$, and combining (37b) and \eqref{E:3.3_9} one gets
$$\|\bar{\phi}_{k+1}(x)-\bar{\phi}_{k+1}(y)\|_p \le d_{k+1}(x,y)=1,$$
which is sufficient to show that $\bar{\phi}_{k+1}$ is $1$-Lipschitz since the domain space is an unweighted graph equipped with the shortest path metric.\\

\item[Case 2] If $x\in V_{k}^{(j,+)}$ and $y\in V_{k}^{(j,-)}$. In this case $x$ and $y$ belong to the same vertical path. First consider $u \in \{\vp_{j}=0\}$. By the induction hypothesis one can assume that $\bar{\phi}_{k+1}(x)(u) = \phi_{k+1}(r)$ for some $r \in D_{k+1}^{(l,+)}$, with $d_{k+1}(b_{k+1}^2,r)=d_{k+1}(b_{k+1}^\omega,x)$, and $\bar{\phi}_{k+1}(y)(u) = \phi_{k+1}(s)$ for some $s \in D_{k+1}^{(l,-)}$, with $d_{k+1}(b_{k+1}^2,s)=d_{k+1}(b_{k+1}^\omega,y)$. Then,
\begin{align*}
 \|\bar{\phi}_{k+1}(x)(u)-\bar{\phi}_{k+1}(y)(u)\|_Y%&=\|\bar{\phi}^{(l,+)}_{k}(x)(u)-\bar{\phi}^{(l,-)}_{k}(y)(u)\|_Y\notag\\&=\|\phi^{(l,+)}_{k}(r)-\phi^{(l,-)}_{k}(s)\|_Y\\
&=\|\phi_{k+1}(r)-\phi_{k+1}(s)\|_Y\\
&\ge \frac1Cd_{k+1}(r,s)\\
&= \frac1Cd_{k+1}(x,y).
\end{align*}
A similar inequality is obtained if $u \in \{\vp_{j}=1\}$. To conclude it remains to observe that 
\begin{equation*}
 \|\bar{\phi}_{k+1}(x)-\bar{\phi}_{k+1}(y)\|_p\ge \inf_{u \in [0,1]} \|\bar{\phi}_{k+1}(x)(u)-\bar{\phi}_{k+1}(y)(u)\|_Y.
\end{equation*}

\item[Case 3] If $x\in V_{k+1}^{(i,\pm)}$ and $y\in V_{k+1}^{(j,\pm)}$ for some $i\neq j\in\bn$. In this case $x$ and $y$ do not belong to the same vertical path.
There are four possible choices of $\pm$ signs, but the argument is essentially the same in each case. So let us consider the case of $x\in V_{k+1}^{(i,+)}$ and $y\in V_{k+1}^{(j,+)}$ for some $i\neq j\in\bn$. We will proceed as in Case 2 and look at pointwise inequalities. There are four subcases to consider. If $u \in \{\vp_{i}=0\}
\cap\{\vp_{j}=1\}$. By the induction hypothesis one can assume that $\bar{\phi}_{k+1}(x)(u) = \phi_{k+1}(r)$ for some $r \in D_{k+1}^{(l,+)}$, with $d_{k+1}(b_{k+1}^2,r)=d_{k+1}(b_{k+1}^\omega,x)$, and $\bar{\phi}_{k+1}(y)(u) = \phi_{k+1}(s)$ for some $s \in D_{k+1}^{(r,+)}$, with $d_{k+1}(b_{k+1}^2,s)=d_{k+1}(b_{k+1}^\omega,y)$. Using (37a), we get
\begin{align} \label{E:3.3_10}
\|\bar{\phi}_{k+1}(x)(u) - \bar{\phi}_{k+1}(y)(u)\|_Y& = \|\phi_{k+1}(r)-\phi_{k+1}(s)\|\\
 &\ge \frac{1}{C}d_{k+1}(r,s)
 =\frac{1}{C}d_{k+1}(x,y).\notag
\end{align} A similar argument shows that \eqref{E:3.3_10} also holds for $u \in \{\varepsilon_{i}=1\}
\cap\{\varepsilon_{j}=0\}$. Hence
$$\|\bar{\phi}_{k+1}(x) - \bar{\phi}_{k+1}(y)\|_p^p \ge \bp(\vp_{i} \ne \vp_{j}) \frac{d_{k+1}(x,y)^p}{C^p}=\frac{d_{k+1}(x,y)^p}{2C^p}, $$ which gives (24a).
\end{description} 

\end{proof}

\begin{corollary}\label{C:3.1} Suppose $1 \le p < \infty$, and let $Y$ be a non-superreflexive Banach space. Then, for every $\vp>0$ and $k\in\bn$,
$D_{k}^{\omega}$ admits a bi-Lipschitz embedding into $L_p([0,1],Y)$  with distortion at most $2^{1+1/p}+\vp$.
\end{corollary}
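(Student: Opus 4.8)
The plan is to reduce this immediately to Theorem~\ref{T:3.2} together with the equi-bi-Lipschitz embeddability of the $2$-branching diamonds into non-super-reflexive spaces. Recall that, by the theorem of Johnson and Schechtman \cite{JohnsonSchechtman2009} (see also \cite{Ostrovskiibook}), the sequence $(D_k^2)_{k\in\bn}$ is a uniformly characterizing sequence for super-reflexivity; since $Y$ is not super-reflexive this yields a uniform bound $\sup_{k\in\bn}c_Y(D_k^2)\le 2$. So the first step is simply to quote this fact.

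Granting it, fix $1\le p<\infty$, $\vp>0$ and $k\in\bn$. Pick $\delta>0$ small enough that $2^{1/p}(2+\delta)\le 2^{1+1/p}+\vp$. Because $Y$ is not super-reflexive, for every $k'\in\bn$ the graph $D_{k'}^2$ embeds into $Y$ with distortion at most $2+\delta$, so the hypothesis of Theorem~\ref{T:3.2} is met with the constant $C=2+\delta$. Theorem~\ref{T:3.2} then produces a bi-Lipschitz embedding $\bar\phi_k\colon D_k^\omega\to L_p([0,1],Y)$ of distortion at most $2^{1/p}C=2^{1/p}(2+\delta)\le 2^{1+1/p}+\vp$, which is exactly the assertion.

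I do not expect a genuine obstacle here: all the work is contained in Theorem~\ref{T:3.2}, which has already been proved, and in the (external) uniform embeddability of the diamonds into non-super-reflexive spaces. The only point that needs a touch of care is the bookkeeping of constants, i.e.\ checking that the distortion available for the maps $D_{k'}^2\hookrightarrow Y$ can be taken strictly below $2+\delta$ for every $\delta>0$ (equivalently $\sup_{k'} c_Y(D_{k'}^2)\le 2$), so that after multiplying by the factor $2^{1/p}$ coming from the $L_p$-transfer of Theorem~\ref{T:3.2} one lands at $2^{1+1/p}$ up to the prescribed $\vp$. If one were content with merely \emph{some} finite universal distortion rather than the explicit value $2^{1+1/p}$, it would suffice to invoke only that $(D_k^2)_{k\in\bn}$ characterizes super-reflexivity, which already gives $\sup_{k'} c_Y(D_{k'}^2)<\infty$, and then conclude in the same way.
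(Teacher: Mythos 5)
Your proposal is correct and follows exactly the paper's route: invoke the uniform embeddability of $(D_k^2)_{k\in\bn}$ into the non-super-reflexive space $Y$ with distortion arbitrarily close to $2$, then apply Theorem \ref{T:3.2} to transfer this to $L_p([0,1],Y)$ at the cost of the factor $2^{1/p}$. The only point worth flagging is attribution: the sharp constant $2+\delta$ (rather than just some finite uniform bound) is due to Pisier's refinement of the Johnson--Schechtman theorem, which is the result the paper actually cites for this step.
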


\begin{proof} By a recent result of Pisier \cite{Pisierbook}, which refines a result of Johnson and Schechtman \cite{JohnsonSchechtman2009}, for every $\vp>0$ there exist bi-Lipschitz embeddings
$\phi_k \colon D_{k}^{2} \rightarrow Y$ such that, for all $x,y \in D_{k}^{2}$, 
$$ \frac{1}{2+\vp} d_k(x,y) \le \|\phi_k(x) - \phi_k(y)\|_Y \le d_k(x,y).$$ 
The conclusion follows from Theorem \ref{T:3.2}.
\end{proof}

\section{Non-embeddability of the countably branching diamond graphs}\label{S:5}

In a celebrated unpublished article (cf. \cite{Benyamini1985}, \cite{BenyaminiLindenstrauss2000}, or \cite{Weston1993}), Enflo used an approximate midpoint argument to show that $L_1$ and $\ell_1$ are not uniformly homeomorphic. This simple, but clever and extremely useful argument, is based on the fact that in $\ell_1$ the size of the approximate midpoint set between two points is rather small whereas it is easy to find pairs of points in $L_1$ that have infinitely many exact midpoints that are far apart. The approximate midpoint argument has been reused many times since in various disguises. The non-embeddability result presented in this section follows from a new attempt to generalize the approximate midpoint argument of Enflo.

\medskip

In order to handle ``unbalanced'' graphs such as the parasol graphs, it is necessary to introduce a slight generalization of the notion of $\delta$-approximate metric midpoints. Let $(X,d_X)$ be a metric space, $x,y\in X$ and $\delta, \lambda\in(0,1)$. A metric $\lambda$-barycenter between $x$ and $y$ is any point $z\in X$ satisfying $d_X(x,z)=\lambda d_X(x,y)$ and $d_X(z,y)=(1-\lambda)d_X(x,y)$. Let us define the set of $\delta$-approximate metric $\lambda$-barycenter set between $x$ and $y$ as
$$\text{Bar}_\lambda(x,y,\delta)=\left\{z\in X: \max\left\{\frac{d_X(x,z)}{\lambda},\frac{d_X(z,y)}{1-\lambda}\right\}\le(1+\delta)d_X(x,y)\right\}.$$
When $\lambda=\frac12$ one recovers the classical notions of metric midpoint and $\delta$-approximate midpoint set (which will be denoted by $\text{Mid}(x,y,\delta)$ in the sequel). In the Banach space setting, the two notions are related in the following elementary way.

\begin{lemma}\label{L:5.1} Let $X$ be a Banach space.
Let $\delta,\lambda \in(0,1)$. Then for every $x\in X$,
\begin{align*}
\text{Bar}_\lambda(-\lambda x,(1-\lambda)x,\delta)\subset \text{Mid}(-\max\{\lambda, 1-\lambda\} x,\max\{\lambda, 1-\lambda\} x,\delta).
\end{align*}
\end{lemma}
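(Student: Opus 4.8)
\textbf{Plan for the proof of Lemma \ref{L:5.1}.} The statement is elementary, so the plan is simply to unwind both definitions and check the set inclusion directly. Let $\mu := \max\{\lambda, 1-\lambda\}$. Fix $x \in X$ and suppose $z \in \text{Bar}_\lambda(-\lambda x, (1-\lambda)x, \delta)$. The distance between the two designated endpoints is $d_X(-\lambda x, (1-\lambda)x) = \|(1-\lambda)x + \lambda x\| = \|x\|$, so the barycenter condition reads
\[
\max\left\{\frac{\|z + \lambda x\|}{\lambda}, \frac{\|(1-\lambda)x - z\|}{1-\lambda}\right\} \le (1+\delta)\|x\|.
\]
On the other side, the distance between the endpoints $-\mu x$ and $\mu x$ is $2\mu\|x\|$, and membership of $z$ in $\text{Mid}(-\mu x, \mu x, \delta)$ amounts to
\[
\max\{\|z + \mu x\|, \|\mu x - z\|\} \le (1+\delta)\mu\|x\|.
\]

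\textbf{Key step.} The heart of the matter is to bound $\|z + \mu x\|$ and $\|\mu x - z\|$ by $(1+\delta)\mu\|x\|$ using the two hypotheses. I would treat the two summands symmetrically. Write $z + \mu x = (z + \lambda x) + (\mu - \lambda)x$. From the barycenter hypothesis $\|z + \lambda x\| \le (1+\delta)\lambda\|x\|$, and by the triangle inequality
\[
\|z + \mu x\| \le \|z + \lambda x\| + (\mu - \lambda)\|x\| \le (1+\delta)\lambda\|x\| + (\mu - \lambda)\|x\| \le (1+\delta)\mu\|x\|,
\]
the last inequality because $\mu - \lambda \ge 0$ and $(1+\delta)\lambda + (\mu-\lambda) = (1+\delta)\mu - \delta(\mu - \lambda) \le (1+\delta)\mu$. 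Similarly, writing $\mu x - z = ((1-\lambda)x - z) + (\mu - (1-\lambda))x$ and using $\|(1-\lambda)x - z\| \le (1+\delta)(1-\lambda)\|x\|$ together with $\mu - (1-\lambda) \ge 0$ gives $\|\mu x - z\| \le (1+\delta)\mu\|x\|$ by the same arithmetic. Hence $z \in \text{Mid}(-\mu x, \mu x, \delta)$.

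\textbf{Main obstacle.} There is essentially no obstacle here — the only thing to be careful about is the bookkeeping with the max: since $\mu$ equals whichever of $\lambda$, $1-\lambda$ is larger, exactly one of the two ``slack'' terms $\mu - \lambda$, $\mu - (1-\lambda)$ equals $|2\lambda - 1|$ and the other is $0$, but the estimate above is written so that it works uniformly regardless of which case occurs, so no case split is actually needed. One should double-check that the degenerate possibility $x = 0$ is handled (both sides are trivially $\{0\}$) and note that $\delta, \lambda \in (0,1)$ guarantees all the constants are positive, though neither fact is really used. I would present the argument in the two-line form above.
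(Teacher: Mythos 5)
Your proof is correct and uses essentially the same ingredients as the paper's: the two barycenter estimates together with one application of the triangle inequality for each of $\|z+\mu x\|$ and $\|z-\mu x\|$. The only difference is cosmetic — the paper splits into the cases $\mu=\lambda$ and $\mu=1-\lambda$ and phrases the key step as a reverse triangle inequality, whereas your uniform decomposition $z+\mu x=(z+\lambda x)+(\mu-\lambda)x$ handles both cases at once.
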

\begin{proof}
Let $\mu:=\max\{\lambda, 1-\lambda\}$. Assume that $z\in \text{Bar}_\lambda(-\lambda x,(1-\lambda)x,\delta)$. Since $\|-\lambda x-(1-\lambda)x\|=\lambda\|x\|$ one has that $\|z+\lambda x\|\le(1+\delta)\lambda\|x\|$ and $\|z-(1-\lambda) x\|\le(1+\delta)(1-\lambda)\|x\|$. Consider first, the case where $\mu=\lambda$. Since $\|\lambda x-(-\lambda x)\|=2\lambda\|x\|$, by definition of the $\delta$-approximate midpoint set one simply needs to show that $\|z+\lambda x\|\le (1+\delta)\lambda\|x\|$ and $\|z-\lambda x\|\le (1+\delta)\lambda\|x\|$, and the former inequality holds. For the latter inequality, since $\lambda\ge \frac12$, 
\begin{align*}
(2\lambda-1)\|x\|&=\|(1-\lambda)x-\lambda x\|\\
&\ge \|z-\lambda x\|-\|z-(1-\lambda)x\|\\
&\ge \|z-\lambda x\|-(1+\delta)(1-\lambda)\|x\|.
\end{align*}
Therefore, $\|z-\lambda x\|\le (2\lambda-1)\|x\|+(1+\delta)(1-\lambda)\|x\|\le(1+\delta)\lambda\|x\|$.

The case where $\mu=1-\lambda$ can be handled in a similar manner. Indeed, Since $\|(1-\lambda) x+(1-\lambda) x\|=2(1-\lambda)\|x\|$, it remains to show that $\|z+(1-\lambda) x\|\le (1+\delta)(1-\lambda)\|x\|$ and $\|z-(1-\lambda) x\|\le (1+\delta)(1-\lambda)\|x\|$, and the latter inequality holds. For the former inequality, since $\lambda\le \frac12$, 
\begin{align*}
(1-2\lambda)\|x\|&=\|(1-\lambda)x-\lambda x\|\\
&\ge \|z+(1-\lambda)x\|-\|z+\lambda x\|\\
&\ge \|z+(1-\lambda) x\|-(1+\delta)\lambda\|x\|.
\end{align*}
Therefore, $\|z+(1-\lambda) x\|\le (1-2\lambda)\|x\|+(1+\delta)\lambda\|x\|\le(1+\delta)(1-\lambda)\|x\|$.
\end{proof}

It is well-known that the size of a $\delta$-approximate metric midpoint set in an asymptotically uniformly convex Banach spaces is ``small''. By ``small'' we mean that the set is included in the (Banach space) sum of a compact set and a ball of small radius. We shall see that $\delta$-approximate metric $\lambda$-barycenter sets in asymptotically midpoint uniformly convex Banach spaces are also ``small''. Being asymptotically midpoint uniformly convexifiable is formerly weaker than being asymptotically uniformly convexifiable. However, it is still open whether asymptotic uniform convexity and asymptotic midpoint uniform convexity are equivalent notions up to renorming. As a by-product of our work we give one more insight on this problem (cf. Section \ref{S:6}).

\medskip

Let $Y$ be a Banach space and $t\in(0,1)$. Define
$$\tilde{\delta}_Y(t):=\inf_{y\in S_Y}\sup_{Z\in\cof(Y)}\inf_{z\in S_Z}\max\{\|y+tz\|, \|y-tz\|\}-1.$$
The norm of $Y$ is said to be asymptotically midpoint uniformly convex if $\tilde{\delta}_Y(t)>0$ for every $t\in(0,1)$. A characterization of asymptotic midpoint uniformly convex norms was given in \cite{DKRRZ} in terms of the Kuratowski measure of noncompactness of approximate midpoint sets. Recall that the Kuratowski measure of noncompactness of a subset $S$ of a metric space, denoted by $\alpha(S)$, is defined as the infimum of all $\vp>0$ such that $S$ can be covered by a finite number of sets of diameter less than $\vp$. Note that it is a property of the metric.

In \cite{DKRRZ} it was shown that a Banach space $Y$ is asymptotically midpoint uniformly convex if and only if $$\ds\lim_{\delta\rightarrow0}\sup_{y\in S_Y}\alpha(\text{Mid}(-y,y,\delta))=0.$$ To prove the main result of this section the following quantitative formulation of the ``only if'' part is needed, and the following lemma can be extracted from the proof of $ii)\implies i)$ in Theorem 2.1 from \cite{DKRRZ}.

\begin{lemma}\label{L:5.2}
If the norm of a Banach space $Y$ is asymptotically midpoint uniformly convex then for every $t\in(0,1)$ and every $y\in S_Y$ one has $$\alpha\left(\text{Mid}\left(-y,y,\tilde{\delta}_Y(t)/4\right)\right)<4t.$$
\end{lemma}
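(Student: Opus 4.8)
The plan is to prove the contrapositive of what the asymptotic midpoint uniform convexity provides, by a direct argument of the type used for $ii)\Rightarrow i)$ in \cite{DKRRZ}. Fix $t\in(0,1)$ and $y\in S_Y$, and set $\rho:=\tilde{\delta}_Y(t)>0$ and $\delta:=\rho/4$. Since $\|y\|=1$,
\[
M:=\text{Mid}(-y,y,\delta)=\bigl\{z\in Y:\ \|z-y\|\le 1+\delta \text{ and } \|z+y\|\le 1+\delta\bigr\}.
\]
From the definition of $\tilde{\delta}_Y$ we have $\sup_{Z\in\cof(Y)}\inf_{z\in S_Z}\max\{\|y+tz\|,\|y-tz\|\}\ge 1+\rho$, so I would first fix once and for all a subspace $Z\in\cof(Y)$ with $\inf_{z\in S_Z}\max\{\|y+tz\|,\|y-tz\|\}\ge 1+\rho/2$. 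The goal is then to show that the assumption $\alpha(M)\ge 4t$ contradicts this choice of $Z$.

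Two elementary observations do most of the work. \emph{(a)} $M$ is bounded (indeed $\|z\|\le 2+\delta$ for $z\in M$) and is stable under halved differences: if $z_1,z_2\in M$, then writing $\tfrac12(z_1-z_2)\pm y=\tfrac12\bigl((z_1\pm y)-(z_2\mp y)\bigr)$ gives $\|\tfrac12(z_1-z_2)\pm y\|\le\tfrac12\bigl(\|z_1\pm y\|+\|z_2\mp y\|\bigr)\le 1+\delta$, i.e.\ $\tfrac12(z_1-z_2)\in M$. \emph{(b)} If $\alpha(M)\ge 4t$, then by the definition of the Kuratowski measure, for every $\vp>0$ the set $M$ cannot be covered by finitely many balls of radius $2t-\vp/2$ (whose diameters are at most $4t-\vp<4t$); hence one may inductively select $z_1,z_2,\dots\in M$ with $\|z_n-z_m\|>2t-\vp$ for all $n\ne m$.

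The main step combines these with the quotient by $Z$. Fix $\vp\in(0,t)$ small, to be pinned down at the end, and pick a $(2t-\vp)$-separated sequence $(z_n)\subset M$ as in \emph{(b)}. Let $Q\colon Y\to Y/Z$ be the quotient map; since $\dim(Y/Z)<\infty$, the bounded family $\{Q(z_n)\}$ is relatively compact, so there exist $n\ne m$ with $\|Q(z_n)-Q(z_m)\|<\vp$, i.e.\ $\dist(z_n-z_m,Z)<\vp$. Put $w:=\tfrac12(z_n-z_m)$, so $w\in M$ by \emph{(a)}, $\|w\|\ge\tfrac12(2t-\vp)=t-\tfrac{\vp}{2}$, and there is $v\in Z$ with $\|w-v\|<\tfrac{\vp}{2}$; consequently $\|v\pm y\|\le 1+\delta+\tfrac{\vp}{2}$ and $\|v\|\ge t-\vp>0$. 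Now set $u:=v/\|v\|\in S_Z$. If $\|v\|\ge t$, then $y\pm tu=\lambda(y\pm v)+(1-\lambda)y$ with $\lambda:=t/\|v\|\in(0,1]$, so $\|y\pm tu\|\le\lambda\|y\pm v\|+(1-\lambda)\le 1+\delta+\tfrac{\vp}{2}$; if $\|v\|<t$, then $tu=v+(t/\|v\|-1)v$ with $\|(t/\|v\|-1)v\|=t-\|v\|<\vp$, so $\|y\pm tu\|\le\|y\pm v\|+\vp\le 1+\delta+\tfrac{3\vp}{2}$. In either case $\max\{\|y+tu\|,\|y-tu\|\}\le 1+\delta+\tfrac{3\vp}{2}=1+\tfrac{\rho}{4}+\tfrac{3\vp}{2}$, and choosing $\vp<\rho/6$ makes the right-hand side strictly less than $1+\rho/2$. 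Since $u\in S_Z$, this contradicts the choice of $Z$, and therefore $\alpha(M)<4t$.

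The routine part is the triangle-inequality bookkeeping; the point that requires the right idea is observation \emph{(a)} — that the approximate-midpoint set between $-y$ and $y$ is closed under halved differences — since this is what lets a large, \emph{essentially asymptotic} separation inside $M$ (produced from $\alpha(M)\ge4t$ via a finite-dimensional pigeonhole in $Y/Z$) be transported onto the unit sphere of a subspace witnessing $\tilde{\delta}_Y(t)$; the two-sided control of the $\vp$'s is exactly what yields the strict inequality $<4t$ rather than $\le 4t$.
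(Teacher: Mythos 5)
Your proof is correct. The paper does not actually prove Lemma \ref{L:5.2} — it only says the statement ``can be extracted from the proof of $ii)\Rightarrow i)$ of Theorem 2.1 in \cite{DKRRZ}'' — and your argument is precisely that extraction: the stability of $\text{Mid}(-y,y,\delta)$ under halved differences, the infinite $(2t-\vp)$-separated sequence obtained from $\alpha(M)\ge 4t$, the pigeonhole in the finite-dimensional quotient $Y/Z$, and the normalization onto $S_Z$ contradicting the choice of $Z$ witnessing $\tilde{\delta}_Y(t)$ up to $\rho/2$. All the quantitative bookkeeping (the $\vp<\min\{t,\rho/6\}$ constraints and the two cases $\|v\|\ge t$ versus $\|v\|<t$) checks out.
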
 

The following lemma about smallness of $\delta$-approximate metric $\lambda$-barycenter sets in asymptotically midpoint uniformly convex spaces, now follows easily.
 
\begin{lemma}\label{L:5.3}If the norm of a Banach space $Y$ is asymptotically midpoint uniformly convex then for every $\lambda\in(0,1)$, every $t\in(0,1)$ and every $x,y\in Y$ there exists a finite subset $S$ of $Y$ such that
$$\text{Bar}_\lambda(x,y,\tilde{\delta}_Y(t)/4)\subset S+4t\max\{\lambda, 1-\lambda\}\|x-y\|B_Y.$$
\end{lemma}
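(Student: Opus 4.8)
The plan is to reduce the barycenter statement to the midpoint statement of Lemma \ref{L:5.2} via the translation/scaling invariance of the metric, and Lemma \ref{L:5.1}. First I would reduce to the case where $x$ and $y$ are symmetric about the origin after an affine change of coordinates. Precisely, given arbitrary $x,y\in Y$, set $v=y-x$ and write the segment from $x$ to $y$ so that, relative to the basepoint $x+\lambda v$, we have $x=(x+\lambda v)-\lambda v$ and $y=(x+\lambda v)+(1-\lambda)v$. Since $\mathrm{Bar}_\lambda$ depends only on the differences $z-x$ and $z-y$, after translating by $-(x+\lambda v)$ we have $\mathrm{Bar}_\lambda(x,y,\delta)=(x+\lambda v)+\mathrm{Bar}_\lambda(-\lambda v,(1-\lambda)v,\delta)$, so it suffices to prove the claim for the pair $-\lambda v,(1-\lambda)v$.

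Next I would apply Lemma \ref{L:5.1} with the vector $v$ in place of $x$: writing $\mu=\max\{\lambda,1-\lambda\}$ and $\delta=\tilde\delta_Y(t)/4$, we get
\begin{equation*}
\mathrm{Bar}_\lambda(-\lambda v,(1-\lambda)v,\delta)\subset \mathrm{Mid}(-\mu v,\mu v,\delta).
\end{equation*}
Now $\mathrm{Mid}(-\mu v,\mu v,\delta)=\mu\|v\|\,\mathrm{Mid}(-w,w,\delta)$ where $w=v/\|v\|\in S_Y$ (this homogeneity being immediate from the definition of the approximate midpoint set, and one may assume $v\neq 0$, the case $v=0$ being trivial). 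By Lemma \ref{L:5.2} applied with this $t$ and this $y=w$, the set $\mathrm{Mid}(-w,w,\delta)$ has Kuratowski measure of noncompactness strictly less than $4t$. By definition of $\alpha$, this means $\mathrm{Mid}(-w,w,\delta)$ can be covered by finitely many sets each of diameter less than $4t$; choosing one point in each such set and calling this finite collection $S_0$, every point of $\mathrm{Mid}(-w,w,\delta)$ lies within $4t$ of some point of $S_0$, i.e. $\mathrm{Mid}(-w,w,\delta)\subset S_0+4tB_Y$.

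Finally I would unwind the scalings: multiplying by $\mu\|v\|$ and translating back by $x+\lambda v$, we obtain
\begin{equation*}
\mathrm{Bar}_\lambda(x,y,\tilde\delta_Y(t)/4)\subset S+4t\,\mu\|v\|B_Y=S+4t\max\{\lambda,1-\lambda\}\|x-y\|B_Y,
\end{equation*}
where $S=(x+\lambda v)+\mu\|v\|S_0$ is finite, as required. There is essentially no serious obstacle here; the only points needing care are the bookkeeping of the affine change of variables (making sure $\mathrm{Bar}_\lambda$ really is translation-equivariant and positively homogeneous, which follows directly from its definition), the degenerate case $x=y$, and translating the strict inequality $\alpha(\cdot)<4t$ of Lemma \ref{L:5.2} into an honest finite $4t$-net using the definition of the Kuratowski measure. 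All the analytic content is already packaged in Lemmas \ref{L:5.1} and \ref{L:5.2}.
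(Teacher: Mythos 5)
Your proof is correct and follows essentially the same route as the paper's: translate to the basepoint $(1-\lambda)x+\lambda y$, apply Lemma \ref{L:5.1} to pass from the barycenter set to the midpoint set $\mathrm{Mid}(-\mu(y-x),\mu(y-x),\delta)$, rescale to the unit sphere, and invoke Lemma \ref{L:5.2} together with the definition of the Kuratowski measure. The only difference is that you spell out the net-extraction and the degenerate case $x=y$, which the paper leaves implicit.
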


\begin{proof} Let $\delta:=\tilde{\delta}_Y(t)/4$ and $\mu:=\max\{\lambda, 1-\lambda\}$. Assume as we may that $x\neq y$. Observe first that 
\begin{align*}
\text{Bar}_\lambda(x,y,\delta)=(1-\lambda)x+\lambda y+\text{Bar}_\lambda(-\lambda(y-x),(1-\lambda)(y-x),\delta).
\end{align*}
By Lemma \ref{L:5.1}
\begin{align*}
\text{Bar}_\lambda(x,y,\delta)&\subseteq (1-\lambda)x+\lambda y+\text{Mid}(-\mu(y-x),\mu(y-x),\delta)\\
&\subseteq (1-\lambda)x+\lambda y+\mu\|x-y\|\text{Mid}\left(-\frac{y-x}{\|x-y\|},\frac{y-x}{\|x-y\|},\delta\right).
\end{align*} 
The conclusion follows from Lemma \ref{L:5.2} and the definition of the Kuratowski measure of non-compactness.
\end{proof}
We now describe a certain family $\cG^\omega$ of sequences of graphs that contains the following sequences of graphs: the countably branching diamond graphs $(D_k^{\omega})_{k\in\bn}$, the countably branching Laakso graphs $(L_k^{\omega})_{k\in\bn}$, and the countably branching parasol graphs $(P_k^{\omega})_{k\in\bn}$.

\begin{comment}
First of all, recall that a directed graph is two-terminal series-parallel, with terminal $s$ and $t$, if it can be produced by a (possibly infinite) sequence of the following operations:
\begin{enumerate}
\item Create a new graph, consisting of a single edge directed from $s$ to $t$.
\item Given two two-terminal series parallel-graphs $G_1$ and $G_2$ with terminals $s_1$, $t_1$, $s_2$, and $t_2$, form a new graph $G_1||G_2$ by identifying $s=s_1=s_2$ and $t=t_1=t_2$. This is known as the parallel composition of $G_1$ and $G_2$.
\item Given two two-terminal series parallel-graphs $G_1$ and $G_2$ with terminals $s_1$, $t_1$, $s_2$, and $t_2$, form a new graph $G_1\oplus G_2$ by identifying $s=s_1$, $t_1=s_2$,  and $t=t_2$. This is known as the series composition of $G_1$ and $G_2$.
\end{enumerate} 
An undirected graph is two-terminal series-parallel with terminal $s$ and $t$ if for some orientation of its edges it forms a directed two-terminal series-parallel with those terminals\footnote{Note that the inductive definition allows multigraphs but this does not matter for us since we only consider unweighted graphs.}. We shall consider series-parallel graphs of a very specific type, namely finite height bundles.
\end{comment}

A bundle with finite height is a (possibly infinite) connected graph with distinguished nodes $s$ and $t$ such that all simple $s$-$t$ paths have equal finite length. The distance between the two terminals is called the height. The simplest of all infinite (without multiple edges) bundle with finite height is the countably branching diamond graph of depth $1$. A bundle $G$ with finite height has the particular property that for all vertex $x\in G$, $d_G(s,x)+d_G(x,t)=d_G(s,t):=h(G)$. 

\begin{defn}
Let $\cG^\omega$ be the family of all sequences of graphs $(G_k^\omega)_{k\in\bn}$ satisfying the following requirements:

\begin{enumerate}[i)]
\item The base (directed) graph $G_1^\omega$ is an infinite bundle with finite height.
\item $G_{k+1}^\omega:=G_{k}^\omega \oslash G_1^\omega$, for $k\ge 1$.  
\end{enumerate}
\end{defn}
The graphs $G_{k}^\omega$ constructed above are infinite bundles with height $h(G_1^\omega)^k$.
The sequence of countably branching diamond graphs is clearly obtained by taking the base graph to be the infinite bundle $D_1^\omega$, while for the countably branching Laakso graphs and the countably branching parasol graphs the base graphs are respectively the infinite bundles $P_1^\omega$ and $L_1^\omega$ depicted below. For the non-symmetric graph $P_1^\omega$ we assume that the edges are directed from the bottom of the parasol to its tip.
 
\begin{pspicture}
\rput(0,3){%\psgrid(12,4)

\uput[0](0,2){$P_1^\omega$}
					\psdots(3,3)
\psdots(1,2)(1.5,2)(2,2)(2.5,2)(3,2)(3.5,2)(4,2)(4.5,2)(5,2) 
					\psdots(3,1)
					\psdots(3,0)
\psline(3,3)(1,2)(1,2)(3,1)
\psline(3,3)(1.5,2)(1.5,2)(3,1)
\psline(3,3)(2,2)(2,2)(3,1)
\psline(3,3)(4,2)(4,2)(3,1)
\psline(3,3)(4.5,2)(4.5,2)(3,1)
\psline(3,3)(5,2)(5,2)(3,1)

\psline(3,1)(3,0)

\uput[0](6,2){$L_1^\omega$}
					\psdots(9,4)
					\psdots(9,3)
\psdots(7,2)(7.5,2)(8,2)(8.5,2)(9,2)(9.5,2)(10,2)(10.5,2)(11,2) 
					\psdots(9,1)
					\psdots(9,0)
\psline(9,4)(9,3)
\psline(9,3)(7,2)(7,2)(9,1)
\psline(9,3)(7.5,2)(7.5,2)(9,1)
\psline(9,3)(8,2)(8,2)(9,1)
\psline(9,3)(10,2)(10,2)(9,1)
\psline(9,3)(10.5,2)(10.5,2)(9,1)
\psline(9,3)(11,2)(11,2)(9,1)
\psline(9,1)(9,0)
}					
\end{pspicture}

\begin{lemma}\label{L:5.4}
Let $G_1^\omega$ be an infinite bundle whose terminal vertices are $v_b$ and $v_t$. Let $Y$ be a Banach space whose norm is asymptotically midpoint uniformly convex. If $f:G_1^{\omega}\to Y$ is bi-Lipschitz embedding with distortion $C$. Then, there exists $\rho:=\rho(G_1^\omega)>0$ such that,
\begin{equation}\label{E:5_1}
\|f(v_t)-f(v_b)\|<\text{Lip}(f)\left(1-\frac{1}{5}\tilde{\delta}_Y\left(\frac{1}{9\rho C}\right)\right)d_{G_1^\omega}(v_t,v_b).
\end{equation}
\end{lemma}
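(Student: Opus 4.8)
The plan is an approximate--barycenter argument by contradiction; I will take $\rho:=h(G_1^\omega)=d_{G_1^\omega}(v_b,v_t)$. After rescaling we may assume $\text{Lip}(f)=1$, so that $\tfrac1C d_{G_1^\omega}(x,y)\le\|f(x)-f(y)\|\le d_{G_1^\omega}(x,y)$ for all $x,y$; it then suffices to prove \eqref{E:5_1} in this normalization and multiply through by $\text{Lip}(f)$. Write $h:=h(G_1^\omega)$ (note $h\ge2$, since an infinite bundle cannot have height $1$) and $t_0:=\tfrac{1}{9\rho C}=\tfrac{1}{9hC}\in(0,1)$, and suppose for contradiction that $\|f(v_t)-f(v_b)\|\ge\bigl(1-\tfrac15\tilde{\delta}_Y(t_0)\bigr)h$; put $\|f(v_t)-f(v_b)\|=ah$ with $1-\tfrac15\tilde{\delta}_Y(t_0)\le a\le1$ (recall $\tilde{\delta}_Y(t_0)>0$ by asymptotic midpoint uniform convexity, and $\tilde{\delta}_Y(t_0)<1$ always).

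The first step is to extract an infinite ``branching level''. The level function $L(x):=d_{G_1^\omega}(v_b,x)$ takes values in $\{0,1,\dots,h\}$, since $L(x)+d_{G_1^\omega}(x,v_t)=h$, and $L^{-1}(0)=\{v_b\}$, $L^{-1}(h)=\{v_t\}$; as $G_1^\omega$ is infinite, one of the finitely many internal levels $\ell\in\{1,\dots,h-1\}$ has $V_{\ell^*}:=L^{-1}(\ell^*)$ infinite. Set $\lambda:=\ell^*/h\in(0,1)$. For every $x\in V_{\ell^*}$ we have $\|f(x)-f(v_b)\|\le\ell^*$ and $\|f(x)-f(v_t)\|\le h-\ell^*$, whence $\tfrac{\|f(x)-f(v_b)\|}{\lambda}\le h$ and $\tfrac{\|f(x)-f(v_t)\|}{1-\lambda}\le h$. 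Using $\bigl(1+\tfrac14\tilde{\delta}_Y(t_0)\bigr)\bigl(1-\tfrac15\tilde{\delta}_Y(t_0)\bigr)=1+\tfrac{1}{20}\tilde{\delta}_Y(t_0)\bigl(1-\tilde{\delta}_Y(t_0)\bigr)\ge1$ we obtain $h\le\bigl(1+\tfrac14\tilde{\delta}_Y(t_0)\bigr)ah=\bigl(1+\tfrac14\tilde{\delta}_Y(t_0)\bigr)\|f(v_t)-f(v_b)\|$, so that $f(x)\in\text{Bar}_\lambda\bigl(f(v_b),f(v_t),\tilde{\delta}_Y(t_0)/4\bigr)$ for every $x\in V_{\ell^*}$.

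Now I apply Lemma \ref{L:5.3} with $t=t_0$, $x=f(v_b)$, $y=f(v_t)$: there is a finite set $S\subseteq Y$ with $\text{Bar}_\lambda\bigl(f(v_b),f(v_t),\tilde{\delta}_Y(t_0)/4\bigr)\subseteq S+4t_0\max\{\lambda,1-\lambda\}\,\|f(v_b)-f(v_t)\|\,B_Y\subseteq S+4t_0hB_Y$, using $\max\{\lambda,1-\lambda\}\le1$ and $\|f(v_b)-f(v_t)\|=ah\le h$. Since $V_{\ell^*}$ is infinite and $S$ finite, the pigeonhole principle yields two distinct vertices $x,x'\in V_{\ell^*}$ whose images lie in a common translate $s+4t_0hB_Y$, hence $\|f(x)-f(x')\|\le8t_0h$. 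On the other hand $\|f(x)-f(x')\|\ge\tfrac1C d_{G_1^\omega}(x,x')\ge\tfrac1C$ because $x\ne x'$. Combining, $\tfrac1C\le8t_0h=\tfrac{8}{9C}$, which is absurd. Therefore $\|f(v_t)-f(v_b)\|<\bigl(1-\tfrac15\tilde{\delta}_Y(t_0)\bigr)h$, and undoing the normalization gives \eqref{E:5_1} with $\rho=h(G_1^\omega)$.

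The one point requiring care is the bookkeeping of constants in the middle step --- the $\tfrac15$ in the conclusion, the $\tfrac14$ forced by Lemma \ref{L:5.3}, and the $9$ in $t_0$ --- which must be balanced so that simultaneously (i) each level-$\ell^*$ vertex maps into the $\tilde{\delta}_Y(t_0)/4$-approximate $\lambda$-barycenter set and (ii) the resulting inequality $\tfrac1C\le\tfrac{8}{9C}$ is genuinely contradictory. The remaining ingredients --- the level decomposition of a finite-height bundle, the pigeonhole step, and the invocation of Lemma \ref{L:5.3} --- are routine; in particular, with $\rho=h(G_1^\omega)$ no Ramsey-type thinning of $V_{\ell^*}$ is needed, since the trivial bound $d_{G_1^\omega}(x,x')\ge1$ for distinct vertices already closes the argument.
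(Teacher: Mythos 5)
Your proof is correct and follows essentially the same route as the paper's: extract an infinite level of the bundle by pigeonhole, show (under the negation of \eqref{E:5_1}) that all its images land in a $\tilde{\delta}_Y(t_0)/4$-approximate $\lambda$-barycenter set, cover that set via Lemma \ref{L:5.3}, and contradict the $1/C$-separation of images of distinct vertices. The only cosmetic differences are that you take $\rho=h(G_1^\omega)$ where the paper takes $\rho=\max\{k,h-k\}$ (both depend only on $G_1^\omega$, so both satisfy the statement), and you argue by global contradiction rather than first isolating the paper's Claim \ref{C:5.1}; the constant bookkeeping $(1+\tfrac14\tilde\delta)(1-\tfrac15\tilde\delta)\ge 1$ and $\tfrac1C>\tfrac{8}{9C}$ matches the paper's exactly.
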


\begin{proof}
Without loss of generality assume that $\text{Lip}(f)=1$. Let  $h:=d_{G_1^\omega}(v_b,v_t)$ denote the height of the bundle. Since the bundle is infinite with finite height by the pigeonhole principle there exists a sequence of vertices $(v_i)_{i\in\bn}$ such that $d(v_b,v_i)+d(v_i,v_t)=h$, and  $k=d(v_b,v_i)$ for all $i\in\bn$. Let $\rho:=\max\{k,h-k\}$ and $\lambda:=1-\frac{k}{h}$. Inequality \eqref{E:5_1} follows from the following claim.

\begin{claim}\label{C:5.1}
There exists $j\in\mathbb{N}$ such that 
\begin{equation}\label{E:5_2}
f(v_j)\notin\text{Bar}_\lambda\left(f(v_t),f(v_b),\frac{1}{4}\tilde{\delta}_Y\left(\frac{1}{9\rho C}\right)\right).\end{equation}
\end{claim}

Indeed, assuming the claim we have either
\begin{equation*}
\|f(v_j)-f(v_t)\|>\lambda\left(1+\frac{1}{4}\tilde{\delta}_Y\left(\frac{1}{9\rho C}\right)\right)\|f(v_t)-f(v_b)\|
\end{equation*}
or
\begin{equation*}
\|f(v_j)-f(v_b)\|>(1-\lambda)\left(1+\frac{1}{4}\tilde{\delta}_Y\left(\frac{1}{9\rho C}\right)\right)\|f(v_t)-f(v_b)\|,
\end{equation*}
which implies in both cases
\begin{align*}
\|f(v_t)-f(v_b)\|& <d(v_t,v_b)\left(1+\frac{1}{4}\tilde{\delta}_X\left(\frac{1}{9\rho C}\right)\right)^{-1}\\
 & \le d(v_t,v_b)\left(1-\frac{1}{5}\tilde{\delta}_X\left(\frac{1}{9\rho C}\right)\right).
 \end{align*}

It remains to prove the claim. By Lemma \ref{L:5.3} there exists a finite subset $S:=\{s_1,\dots,s_n\}\subset Y$ such that 
\begin{equation*}
\text{Bar}_\lambda\left(f(v_t),f(v_b),\frac{1}{4}\tilde{\delta}_Y\left(\frac{1}{9\rho C}\right)\right)\subset S+\frac{4}{9\rho C}\max\{\lambda, 1-\lambda\}\|f(v_t)-f(v_b)\|B_Y.
\end{equation*}
If for every $i$, 
\begin{equation*}
f(v_i)\in\text{Bar}_\lambda\left(f(v_t),f(v_b),\frac{1}{4}\tilde{\delta}_Y\left(\frac{1}{9\rho C}\right)\right)
\end{equation*}
then $f(v_i)=s_{n_i}+y_i$ with $s_{n_i}\in S$ and $y_i\in Y$ so that 
\begin{equation*}
\|y_i\|\le\frac{4}{9\rho C}\max\{\lambda, 1-\lambda\}\|f(v_t)-f(v_b)\|.
\end{equation*}
However, for $i\neq j$ one has

\begin{align*}
\|s_{n_i}-s_{n_j}\|&\ge\|f(v_i)-f(v_j)\|-\|y_i-y_j\|\\
&\ge \frac{d_{G_1^\omega}(v_i,v_j)}{C}-\frac{8}{9\rho C}\max\{\lambda, 1-\lambda\}\|f(v_t)-f(v_b)\|\\
&\ge \frac{1}{C}-\frac{8}{9\rho C}\max\{\lambda, 1-\lambda\}d_{G_1^\omega}(v_t,v_b)\\
&\ge \frac{1}{C}-\frac{8}{9C}>0, 
\end{align*}
which contradicts the fact that $S$ is finite.

\end{proof}

The next proposition is the self-improvement argument \`a la Johnson and Schechtman adapted to our setting.
\begin{proposition}\label{P:5.1}Let $(G_k^\omega)_{k\in\bn}\in \cG^\omega$, and $Y$ be an asymptotically midpoint uniformly convex Banach space. There exists $\rho>0$, such that if $k\in\bn$ and  $G_k^\omega$ embeds bi-Lipschitzly into $Y$ with distortion $C$, then $G_{k-1}^\omega$ embeds bi-Lipschitzly into $Y$ with distortion at most $C\left(1-\frac{1}{5}\tilde{\delta}_Y(\frac{1}{9\rho C})\right)$.
\end{proposition}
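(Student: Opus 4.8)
The plan is to run the Johnson--Schechtman self-improvement scheme: from a bi-Lipschitz copy of $G_k^\omega$ inside $Y$ we extract a bi-Lipschitz copy of $G_{k-1}^\omega$ with strictly smaller distortion, the gain coming from the ``slack'' that Lemma~\ref{L:5.4} produces on each first-level copy of the base bundle. Let $\rho:=\rho(G_1^\omega)>0$ be the constant from Lemma~\ref{L:5.4} (it depends only on the fixed base graph $G_1^\omega$, hence only on the family), put $h:=h(G_1^\omega)$, and suppose $f\colon G_k^\omega\to Y$ is a bi-Lipschitz embedding of distortion $C$. After rescaling we may assume $\text{Lip}(f)=1$, so that $\tfrac1C\,d_{G_k^\omega}(x,y)\le\|f(x)-f(y)\|\le d_{G_k^\omega}(x,y)$ for all $x,y$; write $\theta:=1-\tfrac15\tilde{\delta}_Y\big(\tfrac1{9\rho C}\big)$.

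First I would record two structural facts about the $\oslash$-product, proved by the same path-projection argument as the corresponding statements for diamond graphs in Section~\ref{S:2.3}. Writing $G_k^\omega=G_{k-1}^\omega\oslash G_1^\omega$, the vertex set of the \emph{skeleton} $G_{k-1}^\omega$ embeds into $V(G_k^\omega)$, and each edge $e=(u,v)$ of $G_{k-1}^\omega$ is expanded into a copy $B_e\cong G_1^\omega$ of the base bundle with terminals $u$ and $v$; distinct copies $B_e$ meet only at skeleton vertices. The two facts are: (a) $d_{G_k^\omega}(x,y)=h\,d_{G_{k-1}^\omega}(x,y)$ for all skeleton vertices $x,y$; and (b) each $B_e$ is isometrically embedded in $G_k^\omega$. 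For (a), any $G_k^\omega$-path between skeleton vertices visits a sequence of skeleton vertices which is a walk in $G_{k-1}^\omega$, and between two consecutive ones the path lies inside a single copy $B_e$ and hence has length at least $d_{B_e}(u,v)=h$; the reverse inequality is witnessed by a $G_{k-1}^\omega$-geodesic traversing one height-$h$ simple path per edge-copy. Fact (b) then follows by decomposing a path between two vertices of $B_e$ at its first and last visits to a terminal of $B_e$ and using (a).

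Next I would apply Lemma~\ref{L:5.4} to each restriction $f|_{B_e}$. By (b) this is a bi-Lipschitz embedding of $G_1^\omega$ with $\text{Lip}(f|_{B_e})\le1$ and distortion $C_e\le C$. Since for fixed $y,z$ the function $t\mapsto\max\{\|y+tz\|,\|y-tz\|\}$ is even and convex, hence nondecreasing on $[0,\infty)$, the modulus $\tilde{\delta}_Y$ is nondecreasing; as $C_e\le C$ and $9\rho C\ge 9$, both $\tfrac1{9\rho C_e}$ and $\tfrac1{9\rho C}$ lie in $(0,1)$ and $\tilde{\delta}_Y\big(\tfrac1{9\rho C_e}\big)\ge\tilde{\delta}_Y\big(\tfrac1{9\rho C}\big)$. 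Lemma~\ref{L:5.4} then gives, for the terminals $u,v$ of $B_e$,
\[
\|f(u)-f(v)\|<\text{Lip}(f|_{B_e})\Big(1-\tfrac15\tilde{\delta}_Y\big(\tfrac1{9\rho C_e}\big)\Big)h\le\theta h,
\]
so $f$ contracts every ``skeleton edge'' by a factor strictly better than $\theta h$.

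Finally I would put $g:=f|_{V(G_{k-1}^\omega)}$ and check that $\tfrac1{\theta h}g$ is a bi-Lipschitz embedding of $G_{k-1}^\omega$ of distortion at most $\theta C$. For the upper bound, given skeleton vertices $x,y$ pick a $G_{k-1}^\omega$-geodesic $x=w_0,\dots,w_m=y$ with $m=d_{G_{k-1}^\omega}(x,y)$; consecutive $w_{i-1},w_i$ are the terminals of some $B_e$, so the triangle inequality and the last display yield $\|g(x)-g(y)\|\le\sum_{i=1}^m\|g(w_{i-1})-g(w_i)\|\le\theta h\,d_{G_{k-1}^\omega}(x,y)$. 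For the lower bound, fact (a) and the co-Lipschitz estimate for $f$ give $\|g(x)-g(y)\|=\|f(x)-f(y)\|\ge\tfrac1C d_{G_k^\omega}(x,y)=\tfrac hC\,d_{G_{k-1}^\omega}(x,y)$. Hence $\tfrac1{\theta h}g$ is $1$-Lipschitz with co-Lipschitz constant $\tfrac1{\theta C}$, i.e.\ it has distortion at most $\theta C=C\big(1-\tfrac15\tilde{\delta}_Y(\tfrac1{9\rho C})\big)$, as claimed (the case $k=1$ is the same computation with $G_0^\omega$ the single-edge graph). The only genuinely nonroutine ingredient is the bookkeeping behind facts (a) and (b) --- that the $h$-rescaled skeleton and each base-bundle copy sit isometrically inside $G_k^\omega$; granting these, the estimate is immediate from Lemma~\ref{L:5.4} and the monotonicity of $\tilde{\delta}_Y$.
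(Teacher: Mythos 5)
Your proposal is correct and follows essentially the same route as the paper: restrict the embedding to the skeleton $V(G_{k-1}^\omega)\subset V(G_{k-1}^\omega\oslash G_1^\omega)$, which sits isometrically in $G_k^\omega$ up to the scaling factor $h(G_1^\omega)$, and apply Lemma~\ref{L:5.4} to each base-bundle copy so that the Lipschitz constant improves on adjacent pairs (hence everywhere, by the geodesic/triangle-inequality reduction) while the lower bound is preserved. The paper only sketches this, deferring to the analogous argument in \cite{BaudierZhang2016}; you supply the structural facts (the isometric skeleton and isometrically embedded bundle copies) and the monotonicity of $\tilde{\delta}_Y$ explicitly, all of which check out.
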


\begin{proof}
Let $Y$ be a Banach space with an asymptotically midpoint uniformly convex norm. The argument is similar to the proof of Proposition $2.1$ in \cite{BaudierZhang2016} and the formal argument using set-theoretic representations of the graphs shall be simply sketched. Let $f_k$ be a bi-Lipschitz  embedding of $G_k^\omega$ into $Y$ that is non-contracting and $C$-Lipschitz. Note that the subset of vertices $V(G_{k-1}^\omega)\subset V(G_{k}^\omega):=V(G_{k-1}^\omega\oslash G_1^\omega)$ forms an isometric copy (up to a scaling factor $s:=h(G_1^\omega)$) of $G_{k-1}^\omega$. Define $g_k$ to be the restriction of the embedding $f_k$ to the subset $V(G_{k-1}^\omega)\subset V(G_{k}^\omega):=V(G_{k-1}^\omega\oslash G_1^\omega)$ rescaled by a the factor $s$. Since in our setting it suffices to check the distortion on pair of adjacent vertices, it follows from Lemma \ref{L:5.4} that $g_k$ is a bi-Lipschitz embedding with distortion at most $C\left(1-\frac{1}{5}\tilde{\delta}_Y(\frac{1}{9\rho C})\right)$.

\end{proof}

Our last theorem is an asymptotic version of a result of Johnson and Schechtman in \cite{JohnsonSchechtman2009}.

\begin{theorem}\label{T:5.1}Let $(G_k^\omega)_{k\in\bn}\in \cG^\omega$.\\
If $Y$ is a Banach space admitting an equivalent asymptotically midpoint uniformly convex norm, then $\sup_{k\in\bn}c_Y(G^\omega_k)=\infty$. In particular, if the equivalent asymptotically midpoint uniformly convex norm has power type $p\in(1,\infty)$ then $c_Y(G^\omega_k)\gtrsim k^{1/p}$.
\end{theorem}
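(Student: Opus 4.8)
The plan is to obtain Theorem~\ref{T:5.1} by iterating Proposition~\ref{P:5.1}. Suppose, towards a contradiction, that $Y$ admits an equivalent asymptotically midpoint uniformly convex norm and that $\sup_{k\in\bn}c_Y(G_k^\omega)=:D<\infty$. Passing to the equivalent norm changes distortions only by the (fixed) equivalence constant, so we may assume the norm of $Y$ itself is asymptotically midpoint uniformly convex and that $c_Y(G_k^\omega)\le D'$ for all $k$, where $D'$ absorbs the norm-equivalence factor. Let $\rho>0$ be the constant furnished by Proposition~\ref{P:5.1} for the sequence $(G_k^\omega)_{k\in\bn}$.

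First I would set up the recursion. Fix $k\in\bn$ and choose a bi-Lipschitz embedding of $G_k^\omega$ into $Y$ of distortion $C_k\le D'$. Applying Proposition~\ref{P:5.1} repeatedly, $G_{k-1}^\omega$ embeds with distortion $C_{k-1}\le C_k\bigl(1-\tfrac15\tilde\delta_Y(\tfrac1{9\rho C_k})\bigr)$, then $G_{k-2}^\omega$ with a further such factor, and so on down to $G_1^\omega$. Since every $C_j$ produced this way is at most $D'$ and $C_j\ge 1$ always (distortions are $\ge 1$), and since $t\mapsto\tilde\delta_Y(t)$ is a fixed positive function, the quantity $\theta:=\tfrac15\tilde\delta_Y(\tfrac1{9\rho D'})>0$ is a uniform lower bound for the multiplicative gain at each step: because $C_j\le D'$ we have $\tilde\delta_Y(\tfrac1{9\rho C_j})\ge\tilde\delta_Y(\tfrac1{9\rho D'})$ by monotonicity of $\tilde\delta_Y$, so $C_{j-1}\le C_j(1-\theta)$. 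Iterating from $j=k$ down to $j=1$ gives
\begin{equation*}
1\le C_1\le C_k(1-\theta)^{k-1}\le D'(1-\theta)^{k-1}.
\end{equation*}
Letting $k\to\infty$ forces the right-hand side to $0$, a contradiction; hence $\sup_{k\in\bn}c_Y(G_k^\omega)=\infty$.

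For the quantitative ``in particular'' statement, assume the equivalent norm has power type $p$, i.e.\ $\tilde\delta_Y(t)\gtrsim t^p$. Now one must be more careful, since the gain factor at the step from $G_j^\omega$ to $G_{j-1}^\omega$ degrades as the running distortion grows. Write $C:=c_Y(G_k^\omega)$ and suppose $C$ is at least some absolute constant. Running the recursion down from level $k$, as long as the distortion stays below $2C$ (say), each step multiplies it by a factor $1-\tfrac15\tilde\delta_Y(\tfrac1{9\rho\cdot 2C})\le 1-c\,C^{-p}$ for a constant $c=c(\rho,p)>0$. But the distortion is non-increasing along the recursion and $c_Y(G_1^\omega)\ge 1$, so after $k-1$ steps we would get $1\le C(1-cC^{-p})^{k-1}\le C\exp(-c(k-1)C^{-p})$, whence $c(k-1)C^{-p}\le\log C$, i.e.\ $C^p\log C\gtrsim k$. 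This yields $C\gtrsim (k/\log k)^{1/p}$ immediately; to sharpen $\log k$ away to get $C\gtrsim k^{1/p}$ one argues that if $C=c_Y(G_k^\omega)$ were $o(k^{1/p})$ then for the fixed (absolute) gain $\theta_0:=\tfrac15\tilde\delta_Y(\tfrac1{9\rho D_0})$ associated to any fixed ceiling $D_0$, the distortion drops below $D_0$ after $O(\log C)=o(k)$ of the $k-1$ available steps, and then stays below $D_0$, contradicting $c_Y(G_1^\omega)\ge 1$ once the remaining $k-O(\log C)$ steps each contribute the fixed factor $(1-\theta_0)$. Thus $c_Y(G_k^\omega)\gtrsim k^{1/p}$.

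The main obstacle is bookkeeping the degradation of the gain factor in the power-type case: a naive iteration only sees the worst-case bound $C\le D'$ and gives the non-quantitative statement, while the sharp $k^{1/p}$ bound requires tracking how the running distortion evolves along the $k$ recursion steps and exploiting that it is monotone and bounded below by $1$. Everything else is a routine application of Proposition~\ref{P:5.1} together with the monotonicity and positivity of $\tilde\delta_Y$; in particular no new analysis of $Y$ is needed beyond what Lemmas~\ref{L:5.1}--\ref{L:5.4} already provide.
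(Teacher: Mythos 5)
Your argument for the qualitative statement is exactly the paper's: assume $\sup_k c_Y(G_k^\omega)=C<\infty$, use monotonicity of $\tilde{\delta}_Y$ to get the uniform contraction factor $1-\tfrac15\tilde{\delta}_Y(\tfrac1{9\rho C})$ from Proposition \ref{P:5.1}, and iterate to contradict $c_Y(G_1^\omega)\ge 1$. No issues there.

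For the power-type estimate your route diverges from the paper's, and one step of it is wrong as stated. The paper avoids the logarithm entirely by telescoping differences rather than multiplying factors: from $C_{j-1}\le C_j\bigl(1-\tfrac{\gamma}{5(9\rho C_j)^p}\bigr)$ one gets $C_j-C_{j-1}\ge K/C_j^{p-1}$ with $K=\gamma/(5(9\rho)^p)$, and since $C_j\le C_k$ for $j\le k$, summing gives $C_k\ge C_1+K(k-1)/C_k^{p-1}$, hence $C_k^p\ge K(k-1)$ and $C_k\gtrsim k^{1/p}$ in one line. Your multiplicative iteration only yields $C^p\log C\gtrsim k$, and your proposed repair asserts that the running distortion drops from $C$ to a fixed ceiling $D_0$ in $O(\log C)$ steps. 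That is false: the per-step factor is $1-cC_j^{-p}$, which degrades as $C_j$ grows, so descending from $C$ to $D_0$ costs on the order of $C^p$ steps, not $\log C$ (a geometric count over dyadic levels $2^mD_0$ gives $\sum_m 2^{mp}D_0^p/c\approx C^p/c$). The bootstrap still works with the corrected count --- $O(C^p)=o(k)$ under your hypothesis $C=o(k^{1/p})$, leaving $\gg 1$ steps at the fixed gain $\theta_0$, which is the needed contradiction --- but note also that ruling out $C_k=o(k^{1/p})$ is an asymptotic statement about the sequence, weaker than the uniform bound $c_Y(G_k^\omega)\ge c\,k^{1/p}$ that the theorem asserts; to get the latter you should run the corrected step-count argument for each fixed $k$ (number of available steps $k-1\le O(C_k^p)+O(1)$). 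The paper's telescoping argument delivers the uniform bound directly and is the cleaner choice here.
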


\begin{proof}
Let $C_k:=c_Y(G^\omega_k)$ and assume without loss of generality that $Y$ is a Banach space with an asymptotically midpoint uniformly convex norm. Assume that $C:=\sup_{k\ge1}C_k<\infty$. It follows from Proposition \ref{P:5.1} and the monotonicity of the modulus that
\begin{equation*}
C_{k-1}\le C_k\left(1-\frac{1}{5}\tilde{\delta}_Y\left(\frac{1}{9\rho C_k}\right)\right)\le C_k\left(1-\frac{1}{5}\tilde{\delta}_Y\left(\frac{1}{9\rho C}\right)\right).
\end{equation*}
Letting $k$ go to infinity gives a contradiction.

If moreover there is a constant $\gamma>0$ such that the modulus of asymptotic midpoint uniform convexity of $Y$ satisfies $\tilde{\delta}_Y(t)\ge\gamma t^p$ for some $p\in(1,\infty)$, then by Proposition \ref{P:5.1}
$$C_{k-1}\le C_k\left(1-\frac{1}{5}\tilde{\delta}_Y\left(\frac{1}{9\rho C_k}\right)\right)
\le C_k\left(1-\frac{\gamma}{5\cdot(9\rho C_k)^p}\right).$$
Therefore $C_k-C_{k-1}\ge\frac{K}{C_k^{p-1}}$, where $K=\frac{\gamma}{5\cdot(9\rho)^p}$, and hence
$$C_k\ge\sum_{j=2}^k\frac{K}{C_{j}^{p-1}}+C_1\ge\frac{K(k-1)}{C_{k}^{p-1}}.$$ The conclusion follows easily.
\end{proof}

Since property ($\beta$) of Rolewicz implies asymptotic uniform convexity and hence asymptotic midpoint uniform convexity, Theorem \ref{T:5.1} is a generalization of a result in \cite{BaudierZhang2016} saying that if $Y$ is a Banach space that has an equivalent norm with property $(\beta)$ of Rolewicz, then $\sup_{k\in\bn}c_Y(P^\omega_k)=\infty$ and $\sup_{k\in\bn}c_Y(L^\omega_k)=\infty$. Since there are (reflexive or not) Banach spaces that are asymptotically uniformly convexifiable but not asymptotically uniformly smoothable, neither the
sequences $(L^\omega_k)_{k\in\bn}$ nor $(P^\omega_k)_{k\in\bn}$ are sequences of test spaces for the class of $(\beta)$-renormable spaces. This gives a negative answer to Problem $4.1$ in \cite{BaudierZhang2016}.
%If the Banach space $Y$ has property ($\beta$) with power type $p\in(1,\infty)$. However, $\ell_1$ is an AUC with power type $1$ that fails property ($\beta$), the converse is not true, namely the nonembeddability of the Parasol graphs $\{P^\omega_n\}$ does not characterize property ($\beta$). 

\section{Applications}\label{S:6}

In this last section, the main applications of our work are gathered. The first application deals with the embeddability of the countably branching diamond graphs into certain Banach spaces. 

\begin{corollary}\label{C:6.1}\ \\
\begin{enumerate}[i)]
\item There is a bi-Lipschitz embedding $f_k$ of $D_k^\omega$ into $\co^+$ with distortion at most $3$ such that for all $x\in D_k^\omega$, $|\supp(f_k(x))|\le k+1$.
\item  Let $k\in\bn$ and let $\varepsilon>0$. If $p_k\ge \frac{\ln(2k+2)}{\ln(1+\varepsilon/3)}$, then $D_k^\omega$ admits a bi-Lipschitz embedding into $\ell_{p_k}$ with distortion at most $3+\varepsilon$.
\item Let $(p_n)_{n\in\bn}\subset(1,\infty)$ such that $\lim_{n\to\infty} p_n=\infty$, and let $Y:=(\sum_{n=1}^\infty \ell_{p_n})_{\ell_2}$, then $c_Y(D_k^\omega)\le 3$ for all $k\in\bn$.

 \end{enumerate}
\end{corollary}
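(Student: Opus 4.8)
The plan is to deduce all three parts from the embedding result in Theorem \ref{T:3:2} (the $L_1$-embedding) together with the special structure of the map constructed there, namely that $\Psi_k(x) = 2^k \chi_{S_k(x)}$ where each $S_k(x)$ is a finite union of at most $k+1$ "atoms" built from the independent Bernoulli variables $\{\vp_A : A \in [\bn]^{\le k}\setminus\{\emptyset\}\}$. The key observation I would make explicit first is that for each vertex $x = (A,r)$ the set $S_k(x)$ is measurable with respect to the finite sub-$\sigma$-algebra generated by $\{\vp_{A|_1},\dots,\vp_{A|_{|A|}}\}$, and in fact $\chi_{S_k(x)}$ is a sum of at most $|A|+1 \le k+1$ indicator functions of the cells $T_k^i(A,r)$ (plus the trivial cases $b_k^\omega,t_k^\omega$); consequently, after discretizing the Bernoulli variables onto a finite partition of $[0,1]$, the image $\Psi_k(D_k^\omega)$ lies in a finite-dimensional subspace of $L_1$ spanned by finitely many disjointly-supported indicators, and each $\Psi_k(x)$, rescaled to a probability-like profile, has at most $k+1$ nonzero "levels." For part (i), I would compose $\Psi_k$ with the natural isometry $L_1(\text{finite partition}) \hookrightarrow \ell_1^N \subset \ell_1$ and then replace the value $2^k$ (which is the diameter) by the observation that each coordinate of $f_k(x)$ can be taken nonnegative (giving $\co^+$, the nonnegative cone of $c_0$, rather than merely $\ell_1$) because $\chi_{S_k(x)} \ge 0$; the distortion bound $3$ should come from re-examining the constant in Theorem \ref{T:3:2} — there the distortion is $2$ into $L_1$, so I expect the move to $\co^+$ with the sparse-support normalization to cost a factor bringing the constant to $3$, which is exactly the constant already appearing in Theorem \ref{T:3.3} for bimonotone trees (Example \ref{Ex:1} with $X = c_0$ is the relevant template, since $c_0$ contains $(1,1)$-good $\ell_\infty$-trees).

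For part (ii), having an embedding $f_k$ into $\co^+$ with $|\supp(f_k(x))| \le k+1$ and distortion $\le 3$, I would use the elementary fact that on vectors with support of size at most $m$ the $\ell_\infty$ and $\ell_p$ norms satisfy $\|v\|_\infty \le \|v\|_p \le m^{1/p}\|v\|_\infty$. Applying this to differences $f_k(x)-f_k(y)$, whose supports have size at most $2(k+1) = 2k+2$, the $\ell_p$-norm distorts the $c_0$-norm (equivalently $\ell_\infty$) by a factor at most $(2k+2)^{1/p}$. Choosing $p_k \ge \frac{\ln(2k+2)}{\ln(1+\vare/3)}$ makes $(2k+2)^{1/p_k} \le 1+\vare/3$, and composing with $f_k$ gives total distortion at most $3(1+\vare/3) = 3+\vare$ (after a harmless renormalization, noting $\co^+ \subset \ell_{p_k}$ isometrically as sets of sequences). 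For part (iii), I would take the embedding from part (ii) into $\ell_{p_n}$ valid for those $k$ with $p_n$ large enough, but more directly: since $\lim_n p_n = \infty$, for each fixed $k$ there is $n_0 = n_0(k)$ with $(2k+2)^{1/p_{n_0}} \le 1$ only in the limit — instead I would embed $D_k^\omega$ into the single block $\ell_{p_n}$ for $n$ so large that $(2k+2)^{1/p_n}$ is as close to $1$ as desired, place the image in the $n$-th summand of $(\sum \ell_{p_n})_{\ell_2}$, and observe that a single-block embedding into $(\sum \ell_{p_n})_{\ell_2}$ has the same distortion as into $\ell_{p_n}$ alone; taking $n \to \infty$ along $k$ drives the distortion down to $3$, so $c_Y(D_k^\omega) \le 3$ for every $k$.

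The routine parts are the norm comparisons and the bookkeeping of which finite subspace $\Psi_k$ lands in after discretization. The one genuine point requiring care — and the place I would slow down — is verifying that the constant in part (i) is really $3$ and not something larger: Theorem \ref{T:3:2} gives distortion $2$ into $L_1$, but that embedding's lower bound used the $L_1$-norm comparison $r+s-2rs \ge \tfrac12(r+s)$, which is genuinely an $L_1$ (not $\ell_\infty$) estimate, whereas passing to $\co^+$ forces us to use the sup-norm on the image. I expect one must instead invoke Theorem \ref{T:3.3} directly: $c_0$ contains $(1,1)$-good $\ell_\infty$-trees almost isometrically and, crucially, they are bimonotone (block bases of the unit vector basis of $c_0$ are bimonotone), so the "moreover" clause of Theorem \ref{T:3.3} yields $\Psi_k\colon D_k^\omega \to c_0$ with $C(\vare) = 3$; tracking the definition of $\Psi_k$ there, $\Psi_k(A,r) = \sum_{D \preceq A} c_k(|D|,r) e_{\max(D)}$ (with $e_{\max(\emptyset)}$ interpreted appropriately), which manifestly has support size $\le |A|+1 \le k+1$ and nonnegative coordinates since all $c_k(i,r) \ge 0$. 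So the honest route to (i) is via Theorem \ref{T:3.3} applied to $Y = c_0$, not via Theorem \ref{T:3:2}; once that is pinned down, (ii) and (iii) follow by the norm inequalities above.
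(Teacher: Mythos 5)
Your proposal is correct and, in its final form, is exactly the paper's argument: part (i) is Theorem \ref{T:3.3} applied to the $(1,1)$-good (and bimonotone) $\ell_\infty$-tree $x_A=e_{\max(A)}$ in $c_0$ from Example \ref{Ex:1}, giving $C=3$, nonnegative coordinates, and support size $\le k+1$; parts (ii) and (iii) then follow from the norm comparison $\|v\|_\infty\le\|v\|_{p}\le (2k+2)^{1/p}\|v\|_\infty$ on the differences, exactly as in the paper. The lengthy initial detour through the $L_1$-embedding of Theorem \ref{T:3:2} is a dead end, but you correctly recognize this yourself and discard it in favor of the right route.
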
 

\begin{proof} 
\begin{enumerate}[i)]
\item It follows from Theorem \ref{T:3.3} and Example \ref{Ex:1}.
\item It follows from $i)$ above that the support of $f_k(x)-f_k(y)$ has size at most $2k+2$, and hence
$$\frac{\|f_k(x) - f_k(y)\|_{p_k}}{(1+ \varepsilon/3)}  \le \|f_k(x) - f_k(y)\|_\infty \le d_k(x,y)$$ and $$d_k(x,y) \le 3\|f_k(x)-f_k(y)\|_\infty \le 3 \|f_k(x)-f_k(y)\|_{p_k} $$
\item it follows clearly from the previous item.
\end{enumerate} 
\end{proof}

The next corollary, where tight estimates for the $L_p$-spaces distortion of the countably branching diamond graphs are obtained, complements the results of Lee and Naor in \cite {LeeNaor2004} for the $2$-branching diamond graphs.
 \begin{corollary}\ \\
 
 \begin{enumerate}[i)]
 \item For $1 \le p < \infty$, $c_{\ell_p}(D_k^{\omega})\approx k^{1/p}$.
 \item  For $1 < p < \infty$, $c_{L_p}(D_k^{\omega})\approx \min\{k^{1/p}, \sqrt{k}\}$.
 \end{enumerate}
 \end{corollary}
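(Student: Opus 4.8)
The strategy is to combine the embedding results (Theorem~\ref{T:3.3}, Theorem~\ref{T:3:2}, Corollary~\ref{C:6.1}) with the non-embeddability bounds (Theorem~\ref{T:5.1}) and with known facts about the Szlenk-type moduli of $\ell_p$ and $L_p$. For part~(i), the lower bound $c_{\ell_p}(D_k^\omega)\gtrsim k^{1/p}$ follows directly from Theorem~\ref{T:5.1}: the space $\ell_p$ has an equivalent (in fact its own) asymptotically uniformly convex norm of power type $p$, hence a fortiori an asymptotically midpoint uniformly convex norm of power type $p$, so $c_{\ell_p}(D_k^\omega)\gtrsim k^{1/p}$ with the implied constant depending only on $p$ and the universal parameter $\rho$ from $\cG^\omega$ attached to $(D_k^\omega)_{k\in\bn}$. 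For the matching upper bound, I would invoke Corollary~\ref{C:6.1}(ii): choosing $p_k=p$ is only legitimate when $p\ge \frac{\ln(2k+2)}{\ln(1+\vare/3)}$, i.e. for $p$ large relative to $k$; for the full range one instead uses the embedding $f_k\colon D_k^\omega\hookrightarrow \co^+$ of Corollary~\ref{C:6.1}(i) with $|\supp(f_k(x))|\le k+1$, so that $f_k$ lands in a $(2k+2)$-dimensional coordinate subspace when restricted to any pair difference, and the identity $\ell_\infty^m\hookrightarrow \ell_p^m$ has distortion $m^{1/p}$; this yields $c_{\ell_p}(D_k^\omega)\le 3\,(2k+2)^{1/p}\lesssim k^{1/p}$.

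For part~(ii), the space $L_p$ ($1<p<\infty$) behaves differently because it is uniformly convex (of power type $\max\{p,2\}$) but its \emph{asymptotic} structure is that of $\ell_2$: in the language of Section~\ref{S:4}, $L_p$ contains $\ell_2^n$ asymptotically and (for $p\ne 2$) has asymptotic midpoint uniform convexity modulus of power type exactly~$2$. Hence Theorem~\ref{T:5.1} gives the lower bound $c_{L_p}(D_k^\omega)\gtrsim k^{1/2}$. On the other hand $L_p\supset \ell_p$ isometrically, so the upper bound $c_{L_p}(D_k^\omega)\le c_{\ell_p}(D_k^\omega)\lesssim k^{1/p}$ from part~(i) still applies. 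Combining, $k^{1/2}\lesssim c_{L_p}(D_k^\omega)\lesssim \min\{k^{1/p},k^{1/2}\}=\min\{k^{1/p},\sqrt k\}$ once one also supplies the complementary upper bound $c_{L_p}(D_k^\omega)\lesssim \sqrt k$. That last bound comes from the $L_1$-embedding of Theorem~\ref{T:3:2}: $D_k^\omega$ embeds into $L_1$ with distortion $\le 2$, and since $L_1$ finitely isometrically embeds into $L_p$ for $p\le 2$ via a change of density (and for general $p$ one uses that a finite metric space embedding into $L_1$ with distortion $D$ embeds into $L_p$, $1\le p\le 2$, with distortion $D$, and for $p>2$ one uses the Mazur map / the fact that $D_k^\omega$ has $O(2^k)$ vertices is not needed—rather one uses the vertically isometric $L_1$ embedding composed with the standard $L_1\to L_p$ comparison on the relevant finitely many functions), one gets a bounded-distortion $L_p$-embedding after accounting for the $\ell_p$-versus-$\ell_2$ discrepancy on the at most $2^k$ atoms; more cleanly, one cites the known fact (Lee--Naor \cite{LeeNaor2004} for the $2$-branching case, which transfers) that $c_{L_p}(D_k^2)\approx \sqrt k$ for $1<p\le 2$ and then applies Theorem~\ref{T:3.2} with $Y=L_p$ to transfer the $D_k^2$-embedding into $D_k^\omega\hookrightarrow L_p([0,1],L_p)=L_p$ with distortion blown up only by $2^{1/p}$.

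The main obstacle I anticipate is the upper bound $c_{L_p}(D_k^\omega)\lesssim\sqrt k$ in part~(ii) for $p\ne 1$, since none of the embeddings constructed in this paper lands directly in $L_p$ with distortion $O(\sqrt k)$ for the full range $1<p<\infty$; the cleanest route is to use Theorem~\ref{T:3.2} to reduce to the already-known estimate $c_{L_p}(D_k^2)\lesssim\sqrt k$ (for $1<p<\infty$, Lee--Naor), paying the harmless factor $2^{1/p}$. A secondary point requiring care is making the lower bound in part~(ii) genuinely $\sqrt k$ rather than $k^{1/p}$: this rests on the claim that $L_p$ admits an equivalent asymptotically midpoint uniformly convex norm of power type~$2$ for every $1<p<\infty$, which I would justify by noting $L_p$ contains $\ell_2$ asymptotically (so the modulus cannot have power type better than~$2$) together with the standard computation of $\tilde\delta_{L_p}$, or simply by citing that $L_p$ has property $(\beta)$ of Rolewicz with the appropriate power type. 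Throughout, all implied constants depend only on $p$, so the $\approx$ assertions are exactly what Theorem~C claims.
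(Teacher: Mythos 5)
Part (i) of your proposal is correct and is essentially the paper's argument: the upper bound comes from the distortion-$3$ embedding into $\co^+$ with supports of size $k+1$ (Corollary \ref{C:6.1}(i)) followed by the $\ell_\infty$-versus-$\ell_p$ comparison on coordinates of size at most $2k+2$, and the lower bound comes from $\overline{\delta}_{\ell_p}(t)\gtrsim t^p$ together with Theorem \ref{T:5.1}.

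Part (ii), however, contains a genuine error in the lower bound. You assert that $L_p$ has an asymptotic midpoint uniform convexity modulus of power type exactly $2$ for every $p\neq 2$ and conclude $c_{L_p}(D_k^\omega)\gtrsim\sqrt{k}$ for all $1<p<\infty$. For $p>2$ this is false and in fact contradicts your own (correct) upper bound $c_{L_p}(D_k^\omega)\lesssim k^{1/p}\ll\sqrt{k}$. The confusion is one of direction: the fact that $L_p$ contains $\ell_2$ asymptotically gives an \emph{upper} bound $\overline{\delta}_{L_p}(t)\lesssim t^2$ on the modulus, which is useless for Theorem \ref{T:5.1}; what that theorem needs is a \emph{lower} bound $\tilde{\delta}_{L_p}(t)\gtrsim t^q$, and for $p>2$ the best available exponent is $q=p$ (disjointly supported functions give an isometric copy of $\ell_p$ in $L_p$, so the modulus is genuinely only of power type $p$). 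The correct split, as in the paper, is: for $2\le p<\infty$ the lower bound is $k^{1/p}$ from $\overline{\delta}_{L_p}(t)\gtrsim t^p$ and Theorem \ref{T:5.1}, while for $1<p<2$ the lower bound is $\sqrt{k}$, obtained either from the Lee--Naor bound for $D_k^2\subset D_k^\omega$ or, as you could do self-containedly, from the power-type-$2$ uniform (hence asymptotic midpoint uniform) convexity of $L_p$ in that range.

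On the upper bound $\sqrt{k}$ in part (ii), the ``main obstacle'' you anticipate does not exist: the paper obtains it from the very same Corollary \ref{C:6.1}(i) embedding, composed with the $\ell_\infty^{2k+2}$-to-$\ell_2^{2k+2}$ comparison and the fact that $\ell_2$ embeds \emph{linearly isometrically} into $L_p[0,1]$ for every $1\le p<\infty$; this gives distortion at most $3(2k+2)^{1/2}$ directly. Your alternative route through a known $O(\sqrt{k})$ embedding of $D_k^2$ into $L_p$ transferred via Theorem \ref{T:3.2} could be made to work, but it imports an external upper bound that Lee--Naor do not state in the form you need and adds the $L_p([0,1],L_p)\cong L_p$ identification for no gain; the isometric copy of $\ell_2$ inside $L_p$ is the intended and much shorter argument.
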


 \begin{proof} 
 \begin{enumerate}
 \item The upper estimate simply follows from Corollary \ref{C:6.1} (1) and H\"older's inequality. The lower estimate for the distortion follows from the fact that the modulus of asymptotic uniform convexity satisfies $\overline{\delta}_{\ell_p}(\varepsilon)\gtrsim \varepsilon^p$. 
\item Since $L_p[0,1]$ contains a linearly isometric copy of $\ell_2$, the upper estimate follows again from Corollary \ref{C:6.1} (1) and H\"older's inequality.. For $1<p<2$, the lower estimate was proved for $D_{k}^2$ by Lee and Naor \cite{LeeNaor2004}.
For $2 \le p < \infty$, the lower estimate follows from the fact that the modulus of asymptotic uniform convexity satisfies  $\overline{\delta}_{L_p}(\varepsilon)\gtrsim \varepsilon^p$.
\end{enumerate}\end{proof}

Our third application can be easily derived by combining Theorem \ref{T:3.3}, Theorem \ref{T:4.1} and Theorem \ref{T:5.1}, and is the most important one. Indeed, within the class of reflexive Banach space with an unconditional asymptotic structure it resolves the metric characterization problem in terms of graph preclusion for the class of asymptotically uniformly convexifiable spaces. This metric characterization is only the third of this type in the asymptotic Ribe program after the Baudier-Kalton-Lancien characterization \cite{BKL2010} and the Motakis-Schlumprecht characterization \cite{MotakisSchlumprecht}. Note that we can omit the separability assumption since it follows from \cite{DKLR}, that every (non-separable) reflexive Banach space $Y$ with an unconditional asymptotic structure and $\Sz(Y^*)>\omega$ has a separable subspace with the same properties.

\begin{corollary}\label{C:6.3} Let $Y$ be a reflexive Banach space with an unconditional asymptotic structure. Then,

$$Y\in\langle AUC\rangle\textrm{ if and only if } \sup_{k\in\bn} c_{Y}(D_k^\omega)=\infty.$$
More precisely, the sequence $(D_k^\omega)_{k\in\bn}$ is a uniformly characterizing sequence for the class of asymptotically uniformly convexifiable spaces within the class of reflexive Banach spaces with an unconditional asymptotic structure.
\end{corollary}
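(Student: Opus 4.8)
The plan is to assemble the statement from the three principal theorems of the paper together with the standard dictionary between the Szlenk index and asymptotic uniform convexifiability. First I would record the equivalence, valid for any reflexive Banach space $Y$, that $Y\in\langle AUC\rangle$ if and only if $\Sz(Y^*)\le\omega$; this is the renorming fact quoted in Section~\ref{S:4.2} (the Knaust--Odell--Schlumprecht characterization of asymptotic uniform smoothability applied to $Y^*$, combined with reflexivity). Thus the corollary reduces to the dichotomy between $\Sz(Y^*)\le\omega$ and $\Sz(Y^*)>\omega$.

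For the implication ``$Y\in\langle AUC\rangle\Rightarrow\sup_{k}c_Y(D_k^\omega)=\infty$'': if $Y$ carries an equivalent asymptotically uniformly convex norm then, since $\tilde\delta_Y(t)\ge\overline\delta_Y(t)$, that same norm is asymptotically midpoint uniformly convex, so $Y$ admits an equivalent asymptotically midpoint uniformly convex norm. Since $(D_k^\omega)_{k\in\bn}\in\cG^\omega$ (the base graph $D_1^\omega$ is an infinite bundle of finite height and $D_{k+1}^\omega=D_k^\omega\oslash D_1^\omega$), Theorem~\ref{T:5.1} gives $\sup_k c_Y(D_k^\omega)=\infty$. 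Note that this direction uses neither reflexivity nor the unconditional asymptotic structure.

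For the converse ``$Y\notin\langle AUC\rangle\Rightarrow\sup_k c_Y(D_k^\omega)<\infty$'', I would establish the stronger uniform bound $\sup_k c_Y(D_k^\omega)\le 6$. Assume $Y$ is reflexive, has an unconditional asymptotic structure, and $\Sz(Y^*)>\omega$. If $Y$ is separable, Theorem~\ref{T:4.1} shows that $Y$ contains good $\ell_\infty$-trees of arbitrary height almost isometrically, and then Theorem~\ref{T:3.3}, applied with $\vp\to 0$, embeds each $D_k^\omega$ into $Y$ with distortion at most $6(1+\vp)$; since $C(\vp)=6(1+\vp)$ is independent of $k$, this yields $c_Y(D_k^\omega)\le 6$ for every $k$. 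If $Y$ is non-separable, I would invoke the result of \cite{DKLR} recalled just before the statement: $Y$ possesses a separable subspace $Z$ that is reflexive, has an unconditional asymptotic structure, and satisfies $\Sz(Z^*)>\omega$; applying the separable case to $Z$ and using $c_Y(D_k^\omega)\le c_Z(D_k^\omega)$ propagates the bound to $Y$.

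Finally, for the ``uniformly characterizing sequence'' assertion within the ambient class $\cB_{amb}$ of reflexive spaces with an unconditional asymptotic structure, I would read off the three properties of the metaproblem directly: property (1) is the first implication, property (2) is the second, and property (2') holds because the second implication in fact produced $\sup_k c_Y(D_k^\omega)\le 6$ \emph{uniformly} over all admissible $Y\notin\langle AUC\rangle$. There is no serious obstacle here --- all the substantive work lives in Theorems~\ref{T:3.3}, \ref{T:4.1} and~\ref{T:5.1} --- so the only points requiring care are checking that the passage to a separable subspace preserves reflexivity, the unconditional asymptotic structure, and $\Sz(\cdot^*)>\omega$ simultaneously, and confirming that the distortion estimate of Theorem~\ref{T:3.3} is uniform in $k$, which it is.
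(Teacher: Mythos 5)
Your proposal is correct and follows exactly the paper's route: the paper proves Corollary \ref{C:6.3} by combining Theorem \ref{T:3.3}, Theorem \ref{T:4.1} and Theorem \ref{T:5.1}, using the Szlenk-index dictionary for $\langle AUC\rangle$ and the \cite{DKLR} passage to a separable subspace in the non-separable case, just as you do. The only detail worth polishing is that Theorem \ref{T:3.3} yields distortion $6(1+\vp)^2$ (Lipschitz constant $1+\vp$ times $C(\vp)=6(1+\vp)$), which still gives $c_Y(D_k^\omega)\le 6$ after letting $\vp\to 0$, so your uniform bound and the verification of property (2') stand.
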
 

The two last applications are in renorming theory. It was proven in \cite{DKRRZ}, under the assumption that $Y$
has an unconditional basis (but without assuming reflexivity), that $Y\in\langle AMUC\rangle$ if and only if $Y\in\langle AUC\rangle.$ Whether this equivalence holds in full generality is still open. A simple consequence of Theorem \ref{T:5.1} and Corollary \ref{C:6.3} is that the equivalence holds for separable reflexive Banach space with an unconditional asymptotically structure.
\begin{corollary}\label{C:6.4} Let $Y$ be a separable reflexive Banach space with an unconditional asymptotic structure. Then 
$$ Y\in\langle AMUC\rangle\textrm{ if and only if }Y\in\langle AUC\rangle.$$
\end{corollary}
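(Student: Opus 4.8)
The plan is to obtain this corollary with essentially no new work, by combining Theorem~\ref{T:5.1} (applied to the sequence of countably branching diamond graphs) with Corollary~\ref{C:6.3}. The only implication with any content is $\langle AMUC\rangle\Rightarrow\langle AUC\rangle$; the reverse inclusion is the general fact already recorded in the text that every asymptotically uniformly convex norm is a fortiori asymptotically midpoint uniformly convex, because for every $y\in S_Y$, every $Z\in\cof(Y)$ and every $z\in S_Z$ one has $\max\{\|y+tz\|,\|y-tz\|\}\ge\|y+tz\|$, so that $\tilde\delta_Y(t)\ge\overline\delta_Y(t)$ for all $t\in(0,1)$. Thus an equivalent $\langle AUC\rangle$ norm on $Y$ is already an $\langle AMUC\rangle$ norm, and $\langle AUC\rangle\subseteq\langle AMUC\rangle$ for every Banach space.

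For the converse, I would argue as follows. Suppose $Y\in\langle AMUC\rangle$, i.e. $Y$ admits an equivalent asymptotically midpoint uniformly convex norm. The sequence $(D_k^\omega)_{k\in\bn}$ belongs to the family $\cG^\omega$: its base graph $D_1^\omega$ is an infinite bundle of finite height, and $D_{k+1}^\omega=D_k^\omega\oslash D_1^\omega$ for all $k\ge1$ by construction. Hence Theorem~\ref{T:5.1} applies and yields $\sup_{k\in\bn}c_Y(D_k^\omega)=\infty$. Since $Y$ is in addition separable, reflexive, and has an unconditional asymptotic structure, Corollary~\ref{C:6.3} applies to $Y$ and gives $Y\in\langle AUC\rangle$. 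Combining the two implications proves the equivalence.

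There is no real obstacle: the whole substance is already carried by Theorem~\ref{T:5.1} and by Theorems~\ref{T:3.3} and~\ref{T:4.1} (through Corollary~\ref{C:6.3}). The only things to verify explicitly are that $(D_k^\omega)_{k\in\bn}\in\cG^\omega$ — immediate from the definitions of $\cG^\omega$ and of the $\oslash$-product — and that the standing hypotheses of Corollary~\ref{C:6.3} (separable, reflexive, unconditional asymptotic structure) are exactly those assumed here, which is clear. So the proof reduces to citing these two results in succession.
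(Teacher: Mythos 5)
Your proposal is correct and is exactly the paper's argument: the easy inclusion $\langle AUC\rangle\subseteq\langle AMUC\rangle$ is the standard modulus comparison, and the converse follows by chaining Theorem~\ref{T:5.1} (applied to $(D_k^\omega)_{k\in\bn}\in\cG^\omega$) with Corollary~\ref{C:6.3}. Nothing further is needed.
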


%\begin{proof} Necessary condition: If $Y$ admits an equivalent AMUC norm then by Theorem \ref{T:5.1} $\sup_{k\in\bn}c_Y(D_k^\omega)=\infty$. And hence by Corollary \ref{C:6.4} $Y$ admits an equivalent AUC norm.Sufficient condition: This is clear since an AUC norm is AMUC. \end{proof}

Also, as a consequence of Corollary \ref{C:3.1} and the fact that the 2-branching diamond graphs form a sequence of test-spaces for the class uniformly convexifiable Banach spaces \cite{JohnsonSchechtman2009} one obtains:  
\begin{corollary} let $Y$ be a Banach space and let $1<p<\infty$. Then,
$$L_p([0,1],Y)\in\langle AMUC\rangle \text{ if and only if } Y \in\langle UC\rangle.$$
\end{corollary}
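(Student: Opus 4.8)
The plan is to derive both implications from results already established in the article. For the implication $L_p([0,1],Y)\in\langle AMUC\rangle\Rightarrow Y\in\langle UC\rangle$ I would argue by contraposition. Suppose $Y\notin\langle UC\rangle$; by Enflo's renorming theorem this means $Y$ is not super-reflexive. Then Corollary \ref{C:3.1} applies (say with $\vp=1$) and shows that for every $k\in\bn$ the graph $D_k^\omega$ embeds bi-Lipschitzly into $L_p([0,1],Y)$ with distortion at most $2^{1+1/p}+1$, whence $\sup_{k\in\bn}c_{L_p([0,1],Y)}(D_k^\omega)<\infty$. Since $(D_k^\omega)_{k\in\bn}\in\cG^\omega$ — the base graph $D_1^\omega=K_{2,\omega}$ is an infinite bundle of finite height and $D_{k+1}^\omega=D_k^\omega\oslash D_1^\omega$ — the contrapositive of Theorem \ref{T:5.1} then gives that $L_p([0,1],Y)$ carries no equivalent asymptotically midpoint uniformly convex norm, i.e. $L_p([0,1],Y)\notin\langle AMUC\rangle$.

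For the reverse implication I would start from an equivalent uniformly convex norm $|\cdot|$ on $Y$, with modulus of convexity $\delta$, and invoke the classical theorem of Day (an extension of Clarkson's inequalities) that for $1<p<\infty$ the Bochner space $L_p([0,1],(Y,|\cdot|))$ is uniformly convex. Since $|\cdot|$ is equivalent to the original norm on $Y$, this Bochner space is linearly isomorphic to $L_p([0,1],Y)$, so $L_p([0,1],Y)\in\langle UC\rangle$. To finish, I would record the routine inclusion $\langle UC\rangle\subseteq\langle AMUC\rangle$: for a uniformly convex norm, $y,z\in S_Y$ and $t\in(0,1)$, set $M:=\max\{\|y+tz\|,\|y-tz\|\}$; from $2\|y\|\le\|y+tz\|+\|y-tz\|$ we get $M\ge\|y\|=1$, and applying uniform convexity to the pair $(y\pm tz)/M\in B_Y$ (whose midpoint is $y/M$ and whose mutual distance is $2t/M$) yields $1/M\le 1-\delta(2t/M)$, hence $M\ge(1-\delta(2t/M))^{-1}\ge 1+\delta(2t/M)$; distinguishing $M\le 2$ and $M>2$ gives $M\ge 1+\delta(t)$ in either case. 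Taking $Z=Y$ in the supremum over $\cof(Y)$ in the definition of $\tilde{\delta}$ then gives $\tilde{\delta}_{(Y,|\cdot|)}(t)\ge\delta(t)>0$ for all $t\in(0,1)$, so that norm, and hence the equivalent norm just produced on $L_p([0,1],Y)$, is asymptotically midpoint uniformly convex.

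The only genuinely external input is the classical fact that $L_p(\mu,E)$ inherits uniform convexity from $E$ when $1<p<\infty$, and this is where I expect the only real (though entirely standard) work to sit; all remaining steps are immediate from Corollary \ref{C:3.1}, Theorem \ref{T:5.1}, and Enflo's renorming theorem, so I do not anticipate any further obstacle.
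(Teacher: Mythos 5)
Your proof is correct and takes essentially the same route the paper intends: the forward implication is the combination of Enflo's renorming theorem, Corollary \ref{C:3.1} (Pisier's embedding of $D_k^2$ into non-super-reflexive spaces transferred to $D_k^\omega$ in $L_p([0,1],Y)$), and the contrapositive of Theorem \ref{T:5.1}, while the reverse implication is the classical preservation of uniform convexity under Bochner $L_p$ together with the inclusion $\langle UC\rangle\subseteq\langle AMUC\rangle$. Your explicit verification that a uniformly convex norm satisfies $\tilde{\delta}_Y(t)\ge\delta(t)>0$ is also sound.
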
 

It is worth noticing that there exist reflexive Banach spaces of type $2$ that are not super-reflexive (\cite{James1978}, \cite{PisierXu1987}). Let $Y$ be such a space, it follows that $L_2(Y)$ has type $2$ but $\sup_{k\in\bn}c_{L_2(Y)}(D_k^{\omega})<\infty$. 

\medskip

\textbf{Acknowledgments:} 
The authors wish to thank the organizers of the Workshop in Analysis and Probability, W. B. Johnson, D. Kerr, and G. Pisier. Most of this work was carried out when the third and fourth authors where participants of the workshop in July 2015 and July 2016.
\begin{bibsection}
\begin{biblist}

\bib{ArgyrosDelyianni1997}{article}{
   AUTHOR = {Argyros, S. A.},
   AUTHOR = {Deliyanni, I.},
     TITLE = {Examples of asymptotic {$l_1$} {B}anach spaces},
   JOURNAL = {Trans. Amer. Math. Soc.},
  FJOURNAL = {Transactions of the American Mathematical Society},
    VOLUME = {349},
      YEAR = {1997},
    NUMBER = {3},
     PAGES = {973--995},
      ISSN = {0002-9947},
     CODEN = {TAMTAM},
   MRCLASS = {46B15 (46B20)},
  MRNUMBER = {1390965},
MRREVIEWER = {G. Schechtman},
%       DOI = {10.1090/S0002-9947-97-01774-1},
       URL = {http://dx.doi.org/10.1090/S0002-9947-97-01774-1},
}
\bib{BallBourbaki}{article}{
    AUTHOR = {Ball, K.},
     TITLE = {The {R}ibe programme},
      NOTE = {S{\'e}minaire Bourbaki. Vol. 2011/2012. Expos{\'e}s
              1043--1058},
   JOURNAL = {Ast\'erisque},
  FJOURNAL = {Ast\'erisque},
    NUMBER = {352},
      YEAR = {2013},
     PAGES = {Exp. No. 1047, viii, 147--159},
      ISSN = {0303-1179},
      ISBN = {978-2-85629-371-3},
   MRCLASS = {46B85 (46B07 60G46)},
  MRNUMBER = {3087345},
MRREVIEWER = {Jes{\'u}s A. Jaramillo},
}

\bib{Baudier2007}{article}{
  author={Baudier, F.},
  title={Metrical characterization of super-reflexivity and linear type of Banach spaces},
  journal={Arch. Math.},
  volume={89},
  date={2007},
  pages={419\ndash 429},
}

\bib{BKL2010}{article}{
  author={Baudier, F.},
  author={Kalton, N. J.},
  author={Lancien, G.},
  title={A new metric invariant for Banach spaces},
  journal={Studia Math.},
  volume={199},
  date={2010},
  pages={73--94},
}

\bib{BaudierZhang2016}{article}{
  author={Baudier, F.},
  author={Zhang, S.},
     TITLE = {{$(\beta)$}-distortion of some infinite graphs},
   JOURNAL = {J. Lond. Math. Soc. (2)},
  FJOURNAL = {Journal of the London Mathematical Society. Second Series},
    VOLUME = {93},
      YEAR = {2016},
    NUMBER = {2},
     PAGES = {481--501},
    %  ISSN = {0024-6107},
  % MRCLASS = {46B85},
%  MRNUMBER = {3483124},
%       DOI = {10.1112/jlms/jdv074},
       %URL = {http://dx.doi.org/10.1112/jlms/jdv074},
}

\bib{Benyamini1985}{book}{
    author={Benyamini, Y.},
   AUTHOR = {Benyamini, Yoav},
     TITLE = {The uniform classification of {B}anach spaces},
 BOOKTITLE = {Texas functional analysis seminar 1984--1985 ({A}ustin,
              {T}ex.)},
    SERIES = {Longhorn Notes},
     PAGES = {15--38},
 PUBLISHER = {Univ. Texas Press, Austin, TX},
      YEAR = {1985},
   MRCLASS = {46B20},
  MRNUMBER = {832247},
}

\bib{BenyaminiLindenstrauss2000}{book}{
    author={Benyamini, Y.},
    author={Lindenstrauss, J.},
     title={Geometric nonlinear functional analysis. Vol. 1},
    series={American Mathematical Society Colloquium Publications},
    volume={48},
 publisher={American Mathematical Society},
     place={Providence, RI},
      date={2000},
     %pages={xii+488},
     % isbn={0-8218-0835-4},
    %%review={MR1727673 (2001b:46001)},
}

\bib{Bourgain1986a}{article}{
  author={Bourgain, J.},
  title={The metrical interpretation of superreflexivity in Banach spaces},
  journal={Israel J. Math.},
  volume={56},
  date={1986},
  pages={222--230},
}

\bib{BourgainMilmanWolfson1986}{article}{
   author={Bourgain, J.},
   author={Milman, V.},
   author={Wolfson, H.},
   title={On type of metric spaces},
   journal={Trans. Amer. Math. Soc.},
   volume={294},
   date={1986},
 %  number={1},
   pages={295--317},
 %  issn={0002-9947},
 %  review={\MR{819949 (88h:46033)}},
}

\bib{DKLR}{article}{
  author={Dilworth, S. J.},
  author={Kutzarova, D.},
  author={Lancien, G.},
  author={Randrianarivony, N. L.},
     TITLE = {Equivalent norms with the property $(\beta)$ of Rolewicz},
   JOURNAL = {Rev. R. Acad. Cienc. Exactas Fis. Nat. Ser. A Math. RACSAM},
    VOLUME = {},
      YEAR = {to appear},
    NUMBER = {},
     PAGES = {},
eprint={}
}

\bib{DKRRZ}{article}{
  author={Dilworth, S. J.},
  author={Kutzarova, D.},
  author={Randrianarivony, N. L.},
   AUTHOR = {Revalski, J. P.},
     author={Zhivkov, N. V.},
     TITLE = {Lenses and asymptotic midpoint uniform convexity},
   JOURNAL = {J. Math. Anal. Appl.},
  FJOURNAL = {Journal of Mathematical Analysis and Applications},
    VOLUME = {436},
      YEAR = {2016},
    NUMBER = {2},
     PAGES = {810--821},
      ISSN = {0022-247X},
   MRCLASS = {46B28},
  MRNUMBER = {3446981},
  %     DOI = {10.1016/j.jmaa.2015.11.061},
       URL = {http://dx.doi.org/10.1016/j.jmaa.2015.11.061},
}

\bib{GNRS2004}{article}{
    AUTHOR = {Gupta, A.},
    AUTHOR = {Newman, I.},
    AUTHOR = {Rabinovich, Y.},
    AUTHOR = {Sinclair, A.},
     TITLE = {Cuts, trees and {$l_1$}-embeddings of graphs},
   JOURNAL = {Combinatorica},
  FJOURNAL = {Combinatorica. An International Journal on Combinatorics and
              the Theory of Computing},
    VOLUME = {24},
      YEAR = {2004},
    NUMBER = {2},
     PAGES = {233--269},
      ISSN = {0209-9683},
   MRCLASS = {05C10 (05C12 05C85 68R10 90C27)},
  MRNUMBER = {2071334},
%       DOI = {10.1007/s00493-004-0015-x},
       URL = {http://dx.doi.org/10.1007/s00493-004-0015-x},
}

\bib{Handbook}{collection}{
  title={Handbook of the Geometry of Banach Spaces. Vol. I},
  editor={Johnson, W. B.},
  editor={Lindenstrauss, J.},
  publisher={North-Holland Publishing Co.},
  place={Amsterdam},
  date={2001},
}

\bib{James1978}{article}{
    author={James, R. C.},
     title={Nonreflexive spaces of type $2$},
   journal={Israel J. Math.},
    volume={30},
      date={1978},
    %%number={1-2},
     pages={1\ndash 13},
    %  %issn={0021-2172},
  %  %review={MR508249 (80c:46024)},
}

\bib{JohnsonSchechtman2009}{article}{
  author={Johnson, W. B.},
  author={Schechtman, G.},
  title={Diamond graphs and super-reflexivity},
  journal={J. Topol. Anal.},
  fjournal={Journal of Topology and Analysis},
  volume={1},
  year={2009},
  number={2},
  pages={177--189},
  issn={1793-5253},
  mrclass={52C99 (46B10)},
  mrnumber={2541760 (2010k:52031)},
  url={http://dx.doi.org/10.1142/S1793525309000114},
}

\bib{JohnsonZheng2008}{article}{
   AUTHOR = {Johnson, W. B.},
      AUTHOR = {Zheng, B.},
     TITLE = {A characterization of subspaces and quotients of reflexive
              {B}anach spaces with unconditional bases},
   JOURNAL = {Duke Math. J.},
  FJOURNAL = {Duke Mathematical Journal},
    VOLUME = {141},
      YEAR = {2008},
    NUMBER = {3},
     PAGES = {505--518},
      ISSN = {0012-7094},
     CODEN = {DUMJAO},
   MRCLASS = {46B20 (46B03 46B15)},
  MRNUMBER = {2387429},
MRREVIEWER = {Vladimir M. Kadets},
%       DOI = {10.1215/00127094-2007-003},
       URL = {http://dx.doi.org/10.1215/00127094-2007-003},
}

\bib{Kalton2007}{article}{
  author={Kalton, N. J.},
  title={Coarse and uniform embeddings into reflexive spaces},
  journal={Quart. J. Math. (Oxford)},
  volume={58},
  date={2007},
  pages={393\ndash 414},
}

\bib{KaltonRandrianarivony2008}{article}{
  author={Kalton, N. J.},
  author={Randrianarivony, N. L.},
  title={The coarse Lipschitz structure of $\ell _p\oplus \ell _q$},
  journal={Math. Ann.},
  volume={341},
  date={2008},
  pages={223--237},
}

\bib{KaltonFM11}{article}{
    AUTHOR = {Kalton, N. J.},
     TITLE = {Lipschitz and uniform embeddings into {$\ell_\infty$}},
   JOURNAL = {Fund. Math.},
 % FJOURNAL = {Fundamenta Mathematicae},
    VOLUME = {212},
      YEAR = {2011},
    NUMBER = {1},
     PAGES = {53--69},
   %   ISSN = {0016-2736},
  % MRCLASS = {46B20 (46B25 46M18 46T99)},
  %MRNUMBER = {2771588 (2012d:46038)},
%MRREVIEWER = {Jes{\'u}s M. F. Castillo},
  %     DOI = {10.4064/fm212-1-4},
    %   URL = {http://dx.doi.org/10.4064/fm212-1-4},
}

\bib{KaltonMA12}{article}{
  AUTHOR = {Kalton, N. J.},
     TITLE = {The uniform structure of {B}anach spaces},
   JOURNAL = {Math. Ann.},
  FJOURNAL = {Mathematische Annalen},
    VOLUME = {354},
      YEAR = {2012},
    NUMBER = {4},
     PAGES = {1247--1288},
      ISSN = {0025-5831},
     CODEN = {MAANA},
   MRCLASS = {46B80 (46B20)},
  MRNUMBER = {2992997},
MRREVIEWER = {Y. Benyamini},
    %   DOI = {10.1007/s00208-011-0743-3},
       URL = {http://dx.doi.org/10.1007/s00208-011-0743-3},
}

\bib{KaltonTAMS13}{article}{
 AUTHOR = {Kalton, N. J.},
     TITLE = {Uniform homeomorphisms of {B}anach spaces and asymptotic
              structure},
   JOURNAL = {Trans. Amer. Math. Soc.},
  FJOURNAL = {Transactions of the American Mathematical Society},
    VOLUME = {365},
      YEAR = {2013},
    NUMBER = {2},
     PAGES = {1051--1079},
      ISSN = {0002-9947},
     CODEN = {TAMTAM},
   MRCLASS = {46B80 (46B20)},
  MRNUMBER = {2995383},
MRREVIEWER = {Qingjin Cheng},
    %   DOI = {10.1090/S0002-9947-2012-05665-0},
       URL = {http://dx.doi.org/10.1090/S0002-9947-2012-05665-0},
}

\bib{KaltonIJM13}{article}{
   AUTHOR = {Kalton, N. J.},
     TITLE = {Examples of uniformly homeomorphic {B}anach spaces},
   JOURNAL = {Israel J. Math.},
  FJOURNAL = {Israel Journal of Mathematics},
    VOLUME = {194},
      YEAR = {2013},
    NUMBER = {1},
     PAGES = {151--182},
      ISSN = {0021-2172},
   MRCLASS = {46B03 (46B20 46B80)},
  MRNUMBER = {3047066},
MRREVIEWER = {Ond{\v{r}}ej Kurka},
   %    DOI = {10.1007/s11856-012-0080-6},
       URL = {http://dx.doi.org/10.1007/s11856-012-0080-6},
}

\bib{KnaustOdellSchlumprecht1999}{article}{
  author={Knaust, H.},
  author={Odell, E.},
  author={Schlumprecht, T.},
  title={On asymptotic structure, the Szlenk index and UKK properties in Banach spaces},
  journal={Positivity},
  volume={3},
  date={1999},
  pages={173--199},
}

\bib{Lancien2006}{article}{
  author={Lancien, G.},
  title={A survey on the Szlenk index and some of its applications},
  journal={Revista Real Acad. Cienc. Serie A Mat.},
  volume={100},
  date={2006},
  pages={209\ndash 235},
}

\bib{LeeNaor2004}{article}{
   author={Lee, J. R.},
   author={Naor, A.},
     TITLE = {Embedding the diamond graph in {$L_p$} and dimension
              reduction in {$L_1$}},
   JOURNAL = {Geom. Funct. Anal.},
  FJOURNAL = {Geometric and Functional Analysis},
    VOLUME = {14},
      YEAR = {2004},
    NUMBER = {4},
     PAGES = {745--747},
      ISSN = {1016-443X},
     CODEN = {GFANFB},
   MRCLASS = {46B20 (05C10)},
  MRNUMBER = {2084978},
MRREVIEWER = {Yehoram Gordon},
  %     DOI = {10.1007/s00039-004-0473-8},
       URL = {http://dx.doi.org/10.1007/s00039-004-0473-8},
}

\bib{LeeRaghavendra2010}{article}{
    AUTHOR = {Lee, J. R.},
        AUTHOR = {Raghavendra, P.},
     TITLE = {Coarse differentiation and multi-flows in planar graphs},
   JOURNAL = {Discrete Comput. Geom.},
%  FJOURNAL = {Discrete \& Computational Geometry. An International Journal  of Mathematics and Computer Science},
    VOLUME = {43},
      YEAR = {2010},
    NUMBER = {2},
     PAGES = {346--362},
 %     ISSN = {0179-5376},
   %  CODEN = {DCGEER},
%   MRCLASS = {05C10 (05C85 68R10)},
  MRNUMBER = {2579701},
%       DOI = {10.1007/s00454-009-9172-4},
       URL = {http://dx.doi.org/10.1007/s00454-009-9172-4},
}

\bib{MMT}{article}{
   author={Maurey, B.},
   author={Milman, V. D.},
   author={Tomczak-Jaegermann, N.},
   title={Asymptotic infinite-dimensional theory of Banach spaces},
   conference={
      title={Geometric aspects of functional analysis},
      address={Israel},
      date={1992--1994},
   },
   book={
      series={Oper. Theory Adv. Appl.},
      volume={77},
      publisher={Birkh\"auser},
      place={Basel},
   },
   date={1995},
   pages={149--175},
%   review={\MR{1353458 (97g:46015)}},
}

\bib{MendelNaor2008}{article}{
  author={Mendel, M.},
  author={Naor, A.},
  title={Metric cotype},
  journal={Ann. of Math.(2)},
  volume={168},
  date={2008},
  pages={247\ndash 298},
}

\bib{Milman1971}{article}{
  author={Milman, V. D.},
  title={Geometric theory of Banach spaces. II. Geometry of the unit ball},
  language={Russian},
  journal={Uspehi Mat. Nauk},
  volume={26},
  date={1971},
  pages={73--149},
  note={English translation: Russian Math. Surveys {\bf 26} (1971), 79--163},
}

\bib{MotakisSchlumprecht}{article}{
  author={Motakis, P.},
  author={Schlumprecht, Th.},
  title={A metric interpretation of reflexivity for Banach spaces},
  journal={submitted for publication},
  volume={},
  date={},
  pages={},
  eprint={arXiv:1604.07271}
}

\bib{NaorRibe}{article}{
    AUTHOR = {Naor, Assaf},
     TITLE = {An introduction to the {R}ibe program},
   JOURNAL = {Jpn. J. Math.},
  FJOURNAL = {Japanese Journal of Mathematics},
    VOLUME = {7},
      YEAR = {2012},
    NUMBER = {2},
     PAGES = {167--233},
      ISSN = {0289-2316},
   MRCLASS = {46-02 (46B07 46B80 46B85)},
  MRNUMBER = {2995229},
    %   DOI = {10.1007/s11537-012-1222-7},
       URL = {http://dx.doi.org/10.1007/s11537-012-1222-7},
}

\bib{OdellSchlumprecht2002}{article}{
   author={Odell, E.},
   author={Schlumprecht, Th.},
   title={Trees and branches in Banach spaces},
   journal={Trans. Amer. Math. Soc.},
   volume={354},
   date={2002},
%   number={10},
   pages={4085--4108 (electronic)},
%   issn={0002-9947},
%   review={\MR{1926866 (2003h:46015)}},
}

\bib{OdellSchlumprecht2006}{article}{
  author={Odell, E.},
  author={Schlumprecht, Th.},
  title={Embeddings into Banach spaces with finite dimensional decompositions},
  journal={Revista Real Acad. Cienc. Serie A Mat.},
  volume={100},
  date={2006},
  pages={295\ndash 323},
}

\bib{Ostrovskii2014}{article}{
    AUTHOR = {Ostrovskii, M. I.},
     TITLE = {Test-space characterizations of some classes of {B}anach
              spaces},
 BOOKTITLE = {Algebraic methods in functional analysis},
    SERIES = {Oper. Theory Adv. Appl.},
    VOLUME = {233},
     PAGES = {103--126},
 PUBLISHER = {Birkh\"auser/Springer, Basel},
      YEAR = {2014},
   MRCLASS = {46B07 (05C12)},
  MRNUMBER = {3203986},
MRREVIEWER = {John R. Giles},
  %     DOI = {10.1007/978-3-0348-0502-5_8},
       URL = {http://dx.doi.org/10.1007/978-3-0348-0502-5_8},
}

\bib{Ostrovskii2014AGMS}{article}{
    AUTHOR = {Ostrovskii, M. I.},
     TITLE = {Metric characterizations of superreflexivity in terms of word
              hyperbolic groups and finite graphs},
   JOURNAL = {Anal. Geom. Metr. Spaces},
  FJOURNAL = {Analysis and Geometry in Metric Spaces},
    VOLUME = {2},
      YEAR = {2014},
     PAGES = {154--168},
      ISSN = {2299-3274},
   MRCLASS = {46B85 (05C12 20F67 46B07)},
  MRNUMBER = {3210894},
MRREVIEWER = {Leonid V. Kovalev},
  %     DOI = {10.2478/agms-2014-0005},
       URL = {http://dx.doi.org/10.2478/agms-2014-0005},
}

\bib{Ostrovskiibook}{book}{
    AUTHOR = {Ostrovskii, M. I.},
     TITLE = {Metric embeddings},
    SERIES = {De Gruyter Studies in Mathematics},
    VOLUME = {49},
      NOTE = {Bilipschitz and coarse embeddings into Banach spaces},
 PUBLISHER = {De Gruyter, Berlin},
      YEAR = {2013},
     PAGES = {xii+372},
      ISBN = {978-3-11-026340-4; 978-3-11-026401-2},
   MRCLASS = {46-01 (46-02 46B85)},
  MRNUMBER = {3114782},
      % DOI = {10.1515/9783110264012},
       URL = {http://dx.doi.org/10.1515/9783110264012},
}

\bib{ORpreprint}{article}{
  author={Ostrovskii, M. I.},
  author={Randrianantoanina, B.},
  title={A new approach to low-distortion embeddings of finite metric spaces into non-superreflexive Banach spaces},
  journal={},
  volume={},
  year={},
  number={},
  pages={},
  eprint={arXiv:1609.06618}
}

\bib{Pisier1975}{article}{
  author={Pisier, G.},
  title={Martingales with values in uniformly convex spaces},
  journal={Israel J. Math.},
  volume={20},
  date={1975},
  pages={326--350},
}

\bib{Pisierbook}{book}{
  author={Pisier, G.},
 TITLE = {Martingales in Banach Spaces},
    SERIES = {Cambridge Studies in Advanced Mathematics},
    VOLUME = {155},
     PAGES = {},
 PUBLISHER = {Cambridge University Press},
      YEAR = {2016},
}

\bib{PisierXu1987}{article}{
   author={Pisier, G.},
   author={Xu, Q.},
   title={Random series in the real interpolation spaces between the spaces
   $v\sb p$},
   conference={
      title={Geometrical aspects of functional analysis (1985/86)},
   },
   book={
      series={Lecture Notes in Math.},
      volume={1267},
      publisher={Springer},
      place={Berlin},
   },
   date={1987},
   pages={185--209},
 %  review={\MR{907695 (89d:46011)}},
}

\bib{Ribe1976}{article}{
   author={Ribe, M.},
   title={On uniformly homeomorphic normed spaces},
   journal={Ark. Mat.},
   volume={14},
   date={1976},
 %  number={2},
   pages={237--244},
 %  issn={0004-2080},
 %  review={\MR{0440340 (55 \#13215)}},
}

\bib{Rolewicz1987}{article}{
  author={Rolewicz, S.},
  title={On $\Delta$ uniform convexity and drop property},
  journal={Studia Math.},
  volume={87},
  year={1987},
  pages={181--191},
}

\bib{R}{article}{
    AUTHOR = {Rosenthal, H. P.},
     TITLE = {On a theorem of {J}. {L}. {K}rivine concerning block finite
              representability of {$l^{p}$} in general {B}anach spaces},
   JOURNAL = {J. Funct. Anal.},
  FJOURNAL = {Journal of Functional Analysis},
    VOLUME = {28},
      YEAR = {1978},
    NUMBER = {2},
     PAGES = {197--225},
      ISSN = {0022-1236},
     CODEN = {JFUAAW},
   MRCLASS = {46B25 (46B20)},
  MRNUMBER = {493384},
MRREVIEWER = {Y. Benyamini},
  %     DOI = {10.1016/0022-1236(78)90086-1},
       URL = {http://dx.doi.org/10.1016/0022-1236(78)90086-1},
}

\bib{Szlenk1968}{article}{
  author={Szlenk, W.},
  title={The non existence of a separable reflexive Banach space universal for all separable reflexive Banach spaces},
  journal={Studia Math.},
  volume={30},
  date={1968},
  pages={53\ndash 61},
}

\bib{Weston1993}{article}{
    author={Weston, A.},
     title={Some non-uniformly homeomorphic spaces},
   journal={Israel J. Math.},
    volume={83},
      date={1993},
    %number={3},
     pages={375\ndash 380},
     % issn={0021-2172},
   % review={MR1239071 (95e:46014)},
}

\end{biblist}
\end{bibsection}
\end{document}